\documentclass[11pt]{article}

\usepackage[english]{babel}
\usepackage[utf8]{inputenc}
\usepackage[T1]{fontenc}
\usepackage[toc,page]{appendix}
\usepackage{graphicx}
\usepackage{geometry}
\usepackage{bbm}
\usepackage{dsfont}
\usepackage{alltt}
\usepackage{verbatimbox}
\geometry{a4paper}
\usepackage{amsthm,amsfonts}
\usepackage{amsmath}
\usepackage{amssymb}
\usepackage{mathabx}
\usepackage{accents}
\usepackage{titlesec}
\usepackage{mathtools}
\usepackage{empheq}

\mathtoolsset{showonlyrefs=true}

\pagestyle{plain}
\frenchspacing

\newcommand{\real}{\mathbb{R}}

\newcommand{\T}{\mathbb{T}}

\newcommand{\x}{\xi}

\newcommand{\jap}[1]{\left\langle #1 \right\rangle}

\numberwithin{equation}{section}
\newcommand{\be}{\begin{equation}}
\newcommand{\ee}{\end{equation}}

\theoremstyle{plain}
\newtheorem{thm}{Theorem}
\newtheorem*{thm*}{Theorem}
\newtheorem{prop}{Proposition}
\newtheorem{cor}{Corollary}
\newtheorem{assumpt}{Assumption}
\newtheorem{lem}{Lemma}

\theoremstyle{definition}

\theoremstyle{remark}
\newtheorem{nb}{Remark}

\addtolength{\textwidth}{2cm}
\addtolength{\hoffset}{-1cm}
\addtolength{\textheight}{2cm}
\addtolength{\voffset}{-1cm}

\makeatletter
\def\blfootnote{\xdef\@thefnmark{}\@footnotetext}
\makeatother

\title{Nonlinear smoothing for dispersive PDE: a unified approach}

\date{}
\author{Simão Correia and Jorge Drumond Silva}

\begin{document}
\maketitle

\begin{abstract}
	In several cases of nonlinear dispersive PDEs, the difference between the nonlinear and linear evolutions with the same initial data, i.e. the integral term in Duhamel's formula, exhibits improved regularity. This property is usually called \textit{nonlinear smoothing}. We employ the method of infinite iterations of normal form reductions to obtain a very general theorem yielding existence of nonlinear smoothing for dispersive PDEs, contingent only on establishing two particular bounds. We then apply this theorem to show that the nonlinear smoothing property holds, depending on the regularity of the initial data, for five classical dispersive equations: in $\mathbb{R}$, the cubic nonlinear Schrödinger, the Korteweg-de Vries, the modified Korteweg-de Vries and the derivative Schrödinger equations; in $\mathbb{R}^2$, the modified Zakharov-Kuznetsov equation. For the aforementioned one-dimensional equations, this unifying methodology matches or improves the existing nonlinear smoothing results, while enlarging the ranges 
	of Sobolev regularities where the property holds.
	\vskip10pt
	\noindent\textbf{Keywords}: dispersive nonlinear equations; nonlinear smoothing.
	\vskip10pt
	\noindent\textbf{AMS Subject Classification 2010}: 35Q53, 35Q55, 35B65, 42B37.
\end{abstract}

\section{Introduction}
Many examples of nonlinear dispersive PDEs have been known to exhibit a property, known as \textit{nonlinear smoothing}, whereby the difference between the nonlinear and linear evolutions for the same initial data, given by the integral term in Duhamel's formula, has higher regularity than the initial data. 

The
phenomenon seems to first have been proved and used by Bona and Saut \cite{BS1}, in the
context of showing the existence of
dispersive blow-up for the generalized KdV equations, which was a question that had previously been raised in \cite{BBM}. Indeed, \textit{dispersive blow-up} is an intimately related phenomenon in which singularities occur in the flow of
nonlinear dispersive equations due exclusively to their linear component, while the nonlinear part of the evolution is shown to have a  more regular behavior.
Therefore, proofs of dispersive blow-up type results necessarily always hinge on a central ingredient of
guaranteeing some form of improved regularity for the nonlinear terms, when compared to the linear evolution. 
In this framework, the study started by Bona and Saut for gKdV has since been  followed by several other works for different cases of dispersive equations (see e.g. \cite{BPSS,BS2,FLPS}). 

Currently, however, the usage of the nonlinear smoothing designation has become more specifically
associated to results where the increase of regularity is measured in terms of Sobolev space derivatives, and that is not necessarily
always the case when seeking to prove dispersive singularity formation.
In this stricter sense, the work of Linares and Scialom  in \cite{linaresscialom} was the first to objectively address the issue of gain of regularity of the integral term in
Duhamel's formula for
solutions of the modified KdV equation, in terms of Sobolev space regularity. Later, analogous types of results were proved by Bourgain (see \cite{bourgain}) for the cubic, defocusing, nonlinear Schr\"odinger equation in $\real^2$, and then for the general $L^2$-critical nonlinear Schr\"odinger equation in $\real^n$ by Keraani and Vargas (\cite{keraani}). More recently, M. Erdo\u{g}an, N. Tzirakis and their
collaborators have extensively studied this property for several classes of equations and settings, in one spatial dimension (see \cite{tzirakis7, tzirakis1, tzirakis2} and references therein) by using a combination of normal form reduction together with Bourgain's Fourier restriction norm method ($X^{s,b}$ spaces).

In spite of this extensive research on nonlinear smoothing and related properties for solutions of dispersive equations, no unifying theory seems to exist though. In this paper, we exploit the method of the infinite iteration of normal form reductions, developed in \cite{ko} and \cite{koy} to study unconditional well-posedness, to obtain a very general theorem yielding existence of nonlinear smoothing for dispersive PDEs, dependent only on
guaranteeing that two particular bounds hold.

Let us consider thus a generic dispersive PDE with a nonlinear term of monomial type
\begin{equation}\label{eq:geral}
u_t + i L(D)u = N(u), \quad u\big|_{t=0}=u_0\in H^s(\real^d),
\end{equation}
where $L(D)$, $D=\partial/i$, is a linear differential (or pseudo-differential) operator  given by a real Fourier symbol $L(\xi)$. If $(G(t))_{t\in\real}$ denotes the associated linear group, given by 
$G(t)u_0=e^{-itL(D)}u_0=\mathcal{F}^{-1}\left( e^{-itL(\xi)}\widehat{u_0}(\xi)\right) $, then the solution
to this initial value problem is given in integral form  by the Duhamel formula:
\begin{equation}\label{fullduhamel}
u(t)=G(t)u_0 +\int_0^t G(t-t')N(u)(t')dt'.
\end{equation}

We assume the existence of a nice local well-posedness theory: 
\begin{assumpt}
Given $u_0\in H^{s_0}(\real^d)$, there exist $T=T(\|u_0\|_{H^{s_0}})$ and a solution $u\in C([0,T],H^{s_0}(\real^d))$ of \eqref{eq:geral}, depending continuously on the initial data. Moreover, if $u_0\in H^s(\real^d)$, $s\ge {s_0}$, then $u\in C([0,T], H^s(\real^d))$.
\end{assumpt}

We are interested in the nonlinear smoothing property of \eqref{eq:geral}, which can be stated as follows: 
for some $s\geq s_0$, given initial data $u_0\in H^s(\real^d)$, the difference between the full nonlinear evolution
of the initial value problem \eqref{eq:geral}, represented by \eqref{fullduhamel}, and the corresponding linear evolution, for the same initial data, satisfies
$$
\|u(t)-G(t)u_0\|_{H^{s+\epsilon}}=\left\|\int_0^t G(t-t')N(u)(t')dt'\right\|_{H^{s+\epsilon}}\le C(t,\|u_0\|_{H^s}).
$$
This means that the nonlinear integral term in the Duhamel formula \eqref{fullduhamel} for $u$ is smoother than the linear evolution by $\epsilon
=\epsilon(s)$ 
Sobolev derivatives, at the level of $H^s$ regularity of the flow of \eqref{eq:geral}. 
\vskip10pt
A very convenient quantity in the study of nonlinear smoothing properties  of dispersive equations (just as it is for scattering or other properties where one wants to focus on the behavior of the nonlinear factor in \eqref{fullduhamel} by performing integration by parts or the stationary phase method) 
 is the so called profile, or the interaction representation, of the solution,  $G(-t)u(t)$. By canceling out the linear evolution from the full solution, its Fourier representation exhibits the
advantage of isolating all
the oscillations in the nonlinear term. Denoting by $\tilde{u}(t,\xi)=\mathcal{F}(G(-t)u(t))(\xi)$ its Fourier 
transform in the space variable, it is easy to see that, for a general equation as in \eqref{eq:geral}, one gets
\begin{equation}\label{eq:tildeu}
\tilde{u}(t,\x)=\tilde{u}(0,\x) + \int_0^t\int_{\x_1+\dots+\x_k=\x} e^{is\Phi(\Xi)}m(\Xi)\tilde{u}(s, \x_1)\dots \tilde{u}(s, \x_k)d\x_1\dots d\x_{k-1} ds
\end{equation}
where $\Xi=(\xi,\xi_1,\dots, \xi_k)$, $m$ is some multiplicative symbol related to possible derivatives occurring in the
nonlinearity $N(u)$ and $k$ is the order of the nonlinearity (for simplicity, we have omitted any possible complex conjugation). 
Then, as we will see below, the method pursued will lead to the following two crucial multilinear operators 
\begin{equation}
\mathcal{F}\left[T_\sigma(u_1,\dots,u_k)\right](\x)=\int_{\x_1+\dots + \x_k=\x} \frac{1}{\jap{\Phi(\Xi)}^\sigma}m(\Xi)\hat{u}_1(\x_1)\dots \hat{u}_k(\x_k) d\x_1\dots d\x_{k-1},
\end{equation}
$$
\mathcal{F}[T^{\alpha, M}(u_1,\dots, u_k)](\xi)=\int_{\substack{\x_1+\dots + \x_k=\x\\|\Phi(\Xi)-\alpha|<M}}m(\Xi)\hat{u}_1(\xi_1)\dots \hat{u}_k(\xi_k)d\x_1\dots d\x_k,
$$
such that, for any general dispersive equation, if the following two bounds can be established,
\begin{equation}\label{eq:basicaepsilon}\tag{Bound$_{\sigma,\epsilon}$}
\|T_\sigma(u_1,\dots, u_k)\|_{H^{s+\epsilon}}\lesssim \prod_{j=1}^{k} \|u_j\|_{H^s},
\end{equation}
\begin{equation}\label{eq:frequencyrestricted}\tag{Bound$^{\alpha, M}$}
\|T^{\alpha,M}(u_1,\dots,u_k)\|_{H^s}\lesssim \jap{\alpha}^{0^+}\jap{M}^{1/2^+}\prod_{j=1}^k\|u_j\|_{H^s},\quad \alpha, M\in \real
\end{equation}
then nonlinear smoothing holds, with a gain of regularity of $\epsilon$ Sobolev derivatives. This is the central, generic nonlinear smoothing result of this paper.
\begin{thm}[Nonlinear Smoothing]\label{thm:nonlinearsmooth}
	Suppose that \eqref{eq:basicaepsilon} and \eqref{eq:frequencyrestricted} hold for some $\epsilon>0$, $\sigma\in (0,1)$. Then the flow on $H^s(\real^d)$ generated by \eqref{eq:geral} possesses a nonlinear smoothing effect of order $\epsilon$.
\end{thm}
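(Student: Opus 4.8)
The plan is to work entirely with the profile $\tilde u$ from \eqref{eq:tildeu}. Since $L(\xi)$ is real the group $G(t)$ is unitary on every Sobolev space, so $\|u(t)-G(t)u_0\|_{H^{s+\epsilon}}=\|G(-t)u(t)-u_0\|_{H^{s+\epsilon}}$, and the nonlinear smoothing bound reduces to estimating, in $H^{s+\epsilon}$, the iterated space-time integral $\tilde u(t,\x)-\tilde u(0,\x)$ appearing on the right-hand side of \eqref{eq:tildeu}. First I would decompose the Duhamel integrand dyadically according to the size of the resonance function $\Phi(\Xi)$, splitting it into a near-resonant part $\{|\Phi|\lesssim N\}$ and a non-resonant part $\{|\Phi|\gtrsim N\}$, where $N$ is a threshold to be chosen later.

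On the non-resonant part I would run a normal form reduction, integrating by parts in the time variable through $e^{is\Phi}=\partial_s(e^{is\Phi})/(i\Phi)$. This produces
\[ -\int_0^t \frac{e^{is\Phi}}{i\Phi}\,m(\Xi)\,\partial_s\!\Big(\tilde u(s,\x_1)\cdots\tilde u(s,\x_k)\Big)\,d\x_1\cdots d\x_{k-1}\,ds \]
together with boundary contributions at $s=0,t$. In the volume term each $\partial_s\tilde u(s,\x_\ell)$ is replaced, via \eqref{eq:tildeu} itself, by a degree-$k$ nonlinearity carrying its own sub-phase; this raises the total degree to $2k-1$ and combines the phases. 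Iterating this step indefinitely — at each generation splitting into resonant (terminal) and non-resonant (integrate by parts, then substitute the equation) — yields a tree expansion whose terms are indexed by ordered trees, the nodes being profile factors and the generations corresponding to successive integrations by parts.

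The two hypotheses are exactly what close each term of the expansion. Each near-resonant terminal factor is estimated by \eqref{eq:frequencyrestricted}: localizing $\Phi$ to a dyadic block of size $M$ and using $|\int_0^t e^{is\Phi}\,ds|\lesssim\min(t,\jap{\Phi}^{-1})$, the oscillatory gain beats the loss $\jap{M}^{1/2^+}$, since on dyadic scales $\sum_M \jap{M}^{1/2^+}\min(t,\jap{M}^{-1})$ converges; the freedom to take a general shift $\alpha$ in \eqref{eq:frequencyrestricted} accommodates the fact that at inner nodes the relevant phase is centred away from the origin by the frozen outer modulations. Each boundary factor carries the multiplier $m/\Phi$; on the block $|\Phi|\sim L\gtrsim N$ I would write $\tfrac1\Phi\sim L^{\sigma-1}\cdot\jap{\Phi}^{-\sigma}$, summing $\sum_{L\gtrsim N}L^{\sigma-1}$ (convergent because $\sigma<1$) while the residual $\jap{\Phi}^{-\sigma}$ is precisely the symbol of $T_\sigma$, so these terms are controlled in $H^{s+\epsilon}$ by \eqref{eq:basicaepsilon} with only the outermost factor measured in $H^{s+\epsilon}$ and all inner factors in $H^s$.

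The main obstacle is the convergence of this infinite iteration. One must show that the number of ordered trees of a given size grows at most like $C^n$ (a Catalan-type count) and that this growth is dominated by the combined gains: the strictly positive net smoothing $\jap{\Phi}^{-(1-\sigma)}$ produced at each integration by parts, and the dyadic summability supplied by \eqref{eq:frequencyrestricted}. By choosing the thresholds $N$ to grow with the generation, each iteration should contribute a geometrically small factor, so that the tree sum converges and the $n$-th order remainder tends to zero. Verifying the per-term estimates and the counting bound uniformly over the tree structure — and confirming that the bookkeeping of measuring inner factors in $H^s$ and the top factor in $H^{s+\epsilon}$ genuinely closes at every node — is where the real work lies; once it is in place, summing the expansion yields $\|u(t)-G(t)u_0\|_{H^{s+\epsilon}}\le C(t,\|u_0\|_{H^s})$, which is the asserted nonlinear smoothing of order $\epsilon$.
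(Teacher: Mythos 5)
Your overall skeleton is the paper's: the same infinite normal form reduction, with a near/non-resonant splitting of the phase, integration by parts in time via $e^{is\Phi}=\partial_s\bigl(e^{is\Phi}/(i\Phi)\bigr)$, substitution of the equation to raise the degree, and an infinite iteration closed by the two hypotheses. But two steps, as you set them up, would fail. First, you assign \eqref{eq:frequencyrestricted} to the near-resonant (terminal) terms and propose to beat the $\jap{M}^{1/2^+}$ loss with $\bigl|\int_0^t e^{is\Phi}\,ds\bigr|\lesssim\min\bigl(t,\jap{\Phi}^{-1}\bigr)$. This is doubly problematic: the profiles $\tilde u(s,\xi_j)$ are time-dependent, so the oscillatory time integral cannot be exploited this way without a further integration by parts --- precisely what one refuses to do in the near-resonant region; and, more fundamentally, \eqref{eq:frequencyrestricted} is an $H^s\to H^s$ bound with no $\epsilon$ to give, so your near-resonant contributions would only ever be controlled in $H^s$ and the claimed smoothing of order $\epsilon$ would never materialize. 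The paper allocates the hypotheses the other way around: every term that must be measured in $H^{s+\epsilon}$ has its \emph{top-level} phase weight absorbed by $T_\sigma$ via \eqref{eq:basicaepsilon} (e.g.\ $\|\mathcal{F}^{-1}(\mathcal{N}_1^{(1)}(\tilde u))\|_{H^{s+\epsilon}}\lesssim N^\sigma\|T_\sigma(\tilde u,\dots,\tilde u)\|_{H^{s+\epsilon}}$ on $\{|\Phi|<N\}$, and $M^{-(1-\sigma)}\|T_\sigma\|_{H^{s+\epsilon}}$ on dyadic blocks $|\Phi|\sim M>N$ at later generations), while \eqref{eq:frequencyrestricted} is reserved for the inner factors, yielding the product bound \eqref{eq:estvl} in $H^s$. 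This reallocation has a quantitative consequence your sketch misses: replacing the decay $M^{-1/2+0^+}$ by $M^{-(1-\sigma)}$ makes the $J=1$ dyadic sum divergent for small threshold exponents, and the fix is the specific choice $\delta=\sigma$ in the $(1-\delta)$-power cut-offs of the algorithm (Lemma \ref{lem:boundsgeral}), not merely letting the constants $\beta_J$ grow with the generation.

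Second, you locate ``the real work'' in proving convergence of the tree expansion, with vanishing remainder, directly at $H^s$ regularity. The paper deliberately avoids this: it validates the normal form equation only for smooth data $u_0\in H^m$, $m\gg s,\,d/2$, where $\|u_t(t)\|_{H^s}\lesssim\|u(t)\|_{H^m}^k$ renders every integration by parts and the vanishing of the remainder trivial, and then transfers the resulting bound --- which is uniform in $\|u_0\|_{H^s}$, not in $\|u_0\|_{H^m}$ --- to rough data by density, continuous dependence of the flow, and lower semicontinuity of the $H^{s+\epsilon}$ norm (the liminf step concluding the proof of Theorem \ref{thm:nonlinearsmooth}). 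Without this approximation argument, your plan requires justifying infinitely many integrations by parts and substitutions of the equation for merely-$H^s$ solutions, which is the genuinely hard part of the INFR literature and is left entirely open in your sketch; with it, that difficulty disappears. The tree-counting issue you flag is real but is absorbed in the paper by the factors $\beta_j=((k-1)(j+1)+1)^k$ built into the resonance thresholds, so no separate Catalan-type estimate is needed.
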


\begin{nb}
	As a consequence of our method, one may easily derive the Lipschitz estimate
\begin{equation}\label{eq:LipBound}
	\|u(t)-v(t)-G(t)(u_0-v_0)\|_{H^{s+\epsilon}}\le C(t, \|u_0\|_{H^s}, \|v_0\|_{H^s})\|u_0-v_0\|_{H^s}.
\end{equation}
	Moreover, if \eqref{eq:basicaepsilon} and \eqref{eq:frequencyrestricted} hold for some $s$ and $\epsilon$, it is easy to check that they remain valid for any $s'>s$, with the same gain of regularity $\epsilon$.
\end{nb}

\begin{nb}
	It is worth pointing out that Theorem \ref{thm:nonlinearsmooth} remains valid with a slightly more
	general form of estimate \eqref{eq:frequencyrestricted}:
	\begin{equation}\label{eq:frequencyrestrictedgen}
	\|T^{\alpha,M}(u_1,\dots,u_k)\|_{H^s}\lesssim \sup\{\jap{\alpha}^{\gamma},\jap{M}^{\gamma}\}\jap{M}^{\beta}\prod_{j=1}^k\|u_j\|_{H^s},\quad \alpha, M\in \real,
	\end{equation}
	with constraints connected to \eqref{eq:basicaepsilon}:
	$$
	\gamma+\beta<1,\quad \sigma+\gamma<1.
	$$
	The proof of the result in this more general form requires a slight modification of  the estimates involved in the infinite normal form reduction. However, no vital novelty is involved as the main ideas and concepts of the proof remain the same, and the small increase in generality comes at the cost of making the presentation significantly more burdensome. Therefore, for the sake of clarity, we will restrict our arguments to the simpler case
	\eqref{eq:frequencyrestricted} and briefly explain
	the necessary adaptations when using \eqref{eq:frequencyrestrictedgen}  in Remark \ref{gammabeta}.
	Furthermore, regarding the examples that we present below, this inequality shall only  be applied to prove nonlinear smoothing for the derivative nonlinear Schr\"odinger equation.

\end{nb}

%
%
%
%

\subsection{Applications}

As a result of Theorem \ref{thm:nonlinearsmooth}, one may derive several nonlinear smoothing results for dispersive equations. In this work, we have chosen five classical examples:

\begin{enumerate}
	\item The modified Korteweg-de Vries equation on $\real$:
	\begin{equation}\label{mkdv}\tag{mKdV}
	u_t + u_{xxx} \pm (u^3)_x = 0, \quad u_0\in H^s(\real).
	\end{equation}
	Following \cite{KPV5, KPV4}, the Cauchy problem is well-posed for $s\ge 1/4$ and ill-posed for $s<1/4$. As mentioned above, the first and still best known result regarding nonlinear smoothing for the (mKdV) equation on $\real$ was obtained by Linares and Scialom in \cite{linaresscialom}, who showed that, for any $s\ge 3/4$, one has a gain of one derivative. The proof of this result relies strongly on the algebraic relations of the \eqref{mkdv}, which allow for a perfect combination of smoothing and maximal function estimates. On the torus $\T$, nonlinear smoothing for all regularities starting at the lowest local well-posedness $s>1/2$ was established in \cite{tzirakis1},
	proving that for the (mKdV) equation in this setting there is a gain of $\min\{2s-1,1\}$ derivatives in
	the nonlinear part of the evolution. This also yields a gain of a full derivative for
	initial data in $H^s(\T)$, when $s>1$. We shall prove
	\begin{cor}\label{teo:mkdv}
		For any $s>1/4$, the \eqref{mkdv} flow on $H^s(\real)$ presents a nonlinear smoothing of any order $\epsilon < \min\{ (4s-1)/2,1\}$.
	\end{cor}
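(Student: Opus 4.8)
The plan is to specialise the general framework to \eqref{mkdv} and verify the two hypotheses \eqref{eq:basicaepsilon} and \eqref{eq:frequencyrestricted} of Theorem \ref{thm:nonlinearsmooth}, for a suitable $\sigma\in(1/2,1)$ and every $\epsilon<\min\{(4s-1)/2,1\}$; the conclusion then follows at once. Here the nonlinearity is cubic, so $k=3$, the derivative $\partial_x$ in $(u^3)_x$ produces the symbol $m(\Xi)=i\xi$, and the resonance function is $\Phi(\Xi)=\xi^3-\xi_1^3-\xi_2^3-\xi_3^3$ on the hyperplane $\xi=\xi_1+\xi_2+\xi_3$. The single algebraic identity driving everything is the factorisation
\[
\Phi(\Xi)=3(\xi-\xi_1)(\xi-\xi_2)(\xi-\xi_3)=3(\xi_1+\xi_2)(\xi_2+\xi_3)(\xi_1+\xi_3),
\]
which shows that $\Phi$ is large unless one input is close to the output, equivalently unless two inputs nearly cancel. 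By duality and the substitution $f_j=\jap{\xi_j}^s\hat{u}_j$, both bounds reduce to weighted trilinear $L^2(\real)$ estimates, which I would organise by Littlewood--Paley pieces under the assumption $|\xi_1|\ge|\xi_2|\ge|\xi_3|$, treating separately the \emph{non-resonant} regions (where $\jap{\Phi}$ is comparable to the largest frequency cubed) and the \emph{resonant} regions (where two frequencies nearly cancel).

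For \eqref{eq:basicaepsilon} the symbol to control is $K=\jap{\xi}^{s+\epsilon}|\xi|\,\jap{\Phi}^{-\sigma}\prod_j\jap{\xi_j}^{-s}$. In the high--low--low interaction $|\xi_1|\sim\jap{\xi}\sim N$ with $\xi_2,\xi_3$ bounded one has $|\Phi|\sim N^2|\xi_2+\xi_3|$, so the denominator $\jap{\Phi}^{-\sigma}$ absorbs the output weight $\jap{\xi}^{\epsilon+1}$ precisely when $\epsilon<2\sigma-1$; this is the source of the cap $\epsilon<1$. The decisive region is the balanced resonant interaction, in which all frequencies are $\sim N$ but the two largest cancel, $\eta:=\xi_1+\xi_2$ being small: here $|\Phi|\sim N^2|\eta|$, and after writing the form through the convolutions $f_1*f_2$ and $f_3\star g$ and integrating $\jap{N^2\eta}^{-\sigma}$ in $\eta$ (convergent for $\sigma>1/2$) one is left with a factor $N^{\epsilon-(4s-1)/2}$, summable over dyadic $N$ exactly when $\epsilon<(4s-1)/2$. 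The remaining balanced and non-resonant configurations give milder constraints, so letting $\sigma\uparrow 1$ yields \eqref{eq:basicaepsilon} for every $\epsilon<\min\{(4s-1)/2,1\}$.

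For \eqref{eq:frequencyrestricted} there is no resonance denominator and the mechanism is purely geometric. Since each $\partial_{\xi_j}\Phi$ is a product of two linear factors of the frequencies, of generic size $N^2$, for a fixed output frequency the slab $\{|\Phi-\alpha|<M\}$ has thickness $\sim M/N^2$ and length $\sim N$, hence area $\sim M/N$; applying Cauchy--Schwarz once, and retaining the modulation restriction on only one side of the pairing, bounds the worst (balanced) contribution by $N^{(1-4s)/2}M^{1/2}\prod_j\|f_j\|_{L^2}$. The dyadic sum in $N$ converges precisely for $s>1/4$, the derivative $|\xi|$ being absorbed by the thinness of the slab rather than producing a loss; the borderline scales contribute only the harmless factor $\jap{\alpha}^{0^+}$, giving \eqref{eq:frequencyrestricted} with the required $\jap{M}^{1/2^+}$.

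With both bounds established for all $\epsilon<\min\{(4s-1)/2,1\}$ and a suitable $\sigma\in(1/2,1)$, Theorem \ref{thm:nonlinearsmooth} gives the stated nonlinear smoothing. I expect the main obstacle to be the balanced resonant region of \eqref{eq:basicaepsilon}: there $\jap{\Phi}^{-\sigma}$ degenerates, the derivative coming from $(u^3)_x$ must be recovered from the cancellation parameter $\eta$, and it is exactly the balance in this region that produces the sharp threshold $(4s-1)/2$ and pins the result to $s>1/4$.
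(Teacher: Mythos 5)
Your overall strategy is the paper's: verify \eqref{eq:basicaepsilon} and \eqref{eq:frequencyrestricted} for the factorized phase $\Phi=3(\xi-\xi_1)(\xi-\xi_2)(\xi-\xi_3)$ and invoke Theorem \ref{thm:nonlinearsmooth}, and your computations in the regions you do treat are correct (the high--low--low region giving $\epsilon<2\sigma-1$, and the single-cancellation balanced region giving $N^{\epsilon-(4s-1)/2}$). However, there is a genuine gap in your treatment of \eqref{eq:basicaepsilon}: your case analysis misses the \emph{doubly resonant} configuration $\xi_1\approx\xi_2\approx-\xi_3$ (all of size $N$), equivalently the region where two of the three factors $\xi-\xi_j$ are simultaneously small. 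This region is not contained in your high--low--low case, nor in your ``two largest cancel'' case, since there $\xi_1+\xi_2\approx 2\xi_1\sim N$ is not small; it falls into the configurations you dismiss as giving ``milder constraints'', and that dismissal is false. In this region $|\Phi|\sim N|\eta_1||\eta_2|$ with \emph{two} small parameters $\eta_1=\xi-\xi_1$, $\eta_2=\xi-\xi_2$, so your key bound $|\Phi|\sim N^2|\eta|$ fails, the single-parameter integration of $\jap{N^2\eta}^{-\sigma}$ does not apply, and the constraint one obtains is again exactly $\epsilon<(4s-1)/2$ --- not milder. This is precisely the hardest case in the paper's proof (Case 2, subcase a) of its phase-weighted lemma), where the authors split further according to whether $|\xi-\xi_j|\gtrless|\xi|^{-\alpha}$ and, in the innermost region, abandon the phase altogether and run a Schur test on the kernel $K(\xi,\xi_3)$. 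A repair within your own framework is possible --- a two-parameter Cauchy--Schwarz using $\int\int_{|\eta_1|,|\eta_2|<1}\jap{N\eta_1\eta_2}^{-2\sigma}d\eta_1 d\eta_2\sim N^{-1}\log N$ for $\sigma>1/2$ reproduces the threshold $(4s-1)/2$ --- but it must be carried out; as written, the decisive region of the whole corollary is unaddressed and mislabeled as harmless.

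A second, related gap concerns \eqref{eq:frequencyrestricted}. Your slab-geometry argument (thickness $M/N^2$, length $N$, area $M/N$) presupposes $|\nabla_{(\xi_1,\xi_2)}\Phi|\sim N^2$, which fails exactly at the stationary points of $\Phi$, namely $(\xi_1,\xi_2)=(\xi,\xi),\ (\xi,-\xi),\ (-\xi,\xi),\ (\xi/3,\xi/3)$; these are identified and treated separately in the paper. Near $(\xi/3,\xi/3)$ the phase is essentially $8\xi^3/9$, so the restriction $|\Phi-\alpha|<M$ constrains $\xi$ itself and one must integrate in the output variable (using the symmetric form of the Cauchy--Schwarz estimate \eqref{eq:sup}); near $(\xi,\xi)$ the phase is a degenerate product $\xi^3(p_1-1)(p_2-1)$ and the slab is a hyperbolic neighborhood whose measure requires the H\"older-in-$p_1$ argument of the paper to avoid a logarithmic loss. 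Your remark that ``the borderline scales contribute only the harmless factor $\jap{\alpha}^{0^+}$'' does not engage with these degenerate regions, which are the only places where the estimate is actually at risk (and where, incidentally, the doubly resonant configuration reappears). In short: the skeleton and the two announced thresholds are right, but both lemmas are missing their hardest cases.
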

	Observe that, for any $s>1/4$, one has some gain of regularity, reaching almost one derivative exactly when $s=3/4$. We argue that this is a strong indicator of the strength of our method, as it covers the best known result without relying on the fine algebraic structure of \eqref{mkdv}.

	\item The Korteweg-de Vries equation:
	\begin{equation}\label{kdv}\tag{KdV}
	u_t + u_{xxx} \pm (u^2)_x = 0, \quad u_0\in H^s(\real).
	\end{equation}
	In this context, local well-posedness has been shown for $s\ge -3/4$ using Bourgain spaces (see \cite{KPV6, NTT}). It is also worth referring \cite{killipvisan}, where local existence (in a weaker sense) was proven in $H^{-1}(\real)$. Using the associated vector field $\Gamma=x-t\partial_{xx}^2$ and $X^{s,b}$ spaces, 
	it was shown in \cite{nahasponce} that, if $u_0\in H^s(\real)\cap L^2(|x|^{s/2}dx)$, then $u(t)\in L^2(|x|^{s/2}dx)$ for any $t>0$ (see also Appendix A). Concerning nonlinear smoothing, for any $s>7/6$, if $u_0\in H^{s}(\real)\cap L^2(|x|^{s/2}dx)$, then one has a gain of $1/6$ derivatives (\cite{linaresponcesmith}, \cite{linarespalacios}).  For periodic boundary conditions, one has nonlinear smoothing with a gain of $\min\{2s+1,1\}$, for all $s>-1/2$ (see \cite{tzirakis1}),
	which is
	the lowest regularity for LWP of the \eqref{kdv} equation on $\T$ (\cite{KPV6}). Here, we greatly improve the existing theory on $\real$:
	\begin{cor}\label{teo:kdv}
		For any $s>0$, the \eqref{kdv} flow on $ H^s(\real)\cap L^2(|x|^{s/2}dx)$ presents a nonlinear smoothing of any order $\epsilon < \min\{s,1\}$.
	\end{cor}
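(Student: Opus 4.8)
The plan is to derive Corollary \ref{teo:kdv} directly from Theorem \ref{thm:nonlinearsmooth}, reducing everything to the verification of the two bounds \eqref{eq:basicaepsilon} and \eqref{eq:frequencyrestricted} for \eqref{kdv}, for each $s>0$, each $\epsilon<\min\{s,1\}$, and a suitable $\sigma\in(0,1)$. First I would record the relevant data: the nonlinearity $\mp(u^2)_x$ has order $k=2$, the symbol coming from the derivative is $m(\Xi)=\xi$ up to a constant, and the resonance function is the classical KdV phase
\[
\Phi(\xi,\xi_1,\xi_2)=-\xi^3+\xi_1^3+\xi_2^3=-3\,\xi\,\xi_1\,\xi_2,\qquad \xi=\xi_1+\xi_2,
\]
so that $|\Phi|\sim|\xi|\,|\xi_1|\,|\xi_2|$. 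This quantity degenerates precisely when one of the three frequencies approaches zero, and it is exactly this low-frequency degeneracy that forces the weighted space $L^2(|x|^{s/2}dx)$ into the statement: on the Fourier side the weight amounts to fractional regularity of $\hat u$ in the frequency variable, and, as recalled above following \cite{nahasponce}, it is propagated by the flow through the vector field $\Gamma=x-t\partial_{xx}^2$. I would therefore run the iteration of Theorem \ref{thm:nonlinearsmooth} in $H^s\cap L^2(|x|^{s/2}dx)$, verifying \eqref{eq:basicaepsilon} and \eqref{eq:frequencyrestricted} with the inputs measured in this space, the former genuinely requiring the weight and the latter holding already in plain $H^s$.

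The bound \eqref{eq:frequencyrestricted} is the more routine of the two. Fixing $\alpha,M$ and writing $\xi_2=\xi-\xi_1$, the constraint reads $|3\xi\xi_1(\xi-\xi_1)-\alpha|<M$; for fixed $\xi$ the map $\xi_1\mapsto 3\xi\xi_1(\xi-\xi_1)$ has derivative $3\xi(\xi-2\xi_1)$, which vanishes only at the critical point $\xi_1=\xi/2$. Away from it the sublevel set in $\xi_1$ has measure $\lesssim M/|\xi(\xi-2\xi_1)|$, and a Cauchy--Schwarz in that variable, after treating the quadratic degeneracy near $\xi_1=\xi/2$ by a local change of variables, produces the factor $\jap{M}^{1/2}$; the frequency weights are absorbed into $\prod_j\|u_j\|_{H^s}$ and no growth in $\alpha$ is needed, so the harmless $\jap{\alpha}^{0^+}$ is more than enough. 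This is compatible in the iteration with any choice $\sigma\in(0,1)$ in \eqref{eq:basicaepsilon}.

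For \eqref{eq:basicaepsilon}, setting $f_j(\xi_j)=\jap{\xi_j}^s\hat u_j(\xi_j)$ reduces the claim to the $L^2\times L^2\to L^2$ boundedness of the bilinear multiplier
\[
K(\xi,\xi_1,\xi_2)=\frac{\jap{\xi}^{s+\epsilon}\,|\xi|}{\jap{3\xi\xi_1\xi_2}^{\sigma}\,\jap{\xi_1}^s\,\jap{\xi_2}^s},
\]
which I would estimate by a dyadic decomposition into the usual interaction regions. In the genuinely high-frequency regimes, where at least two frequencies are comparable and large while the third is of size $\gtrsim 1$, the gain $\jap{\Phi}^{-\sigma}\sim(|\xi|\,|\xi_1|\,|\xi_2|)^{-\sigma}$ dominates the loss $\jap{\xi}^{\epsilon}|\xi|$ once $\sigma$ is taken close enough to $1$ and $\epsilon<1$; a Schur test or Cauchy--Schwarz in the transverse variable then closes the estimate, so that the cap $\epsilon<1$ encodes the single derivative of the nonlinearity against the strength of the resonance.

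The delicate region, and the main obstacle, is the high$\times$low interaction $|\xi_2|\ll1$, $\xi\sim\xi_1$ large, where $\jap{\Phi}^{-\sigma}\sim\jap{\xi^2\xi_2}^{-\sigma}$ provides essentially no smoothing as $\xi_2\to0$ while the numerator still carries the unfavorable factor $\jap{\xi}^{\epsilon}|\xi|$. This contribution cannot be controlled by $\|u_1\|_{H^s}\|u_2\|_{H^s}$ alone (reflecting why nonlinear smoothing for \eqref{kdv} on $\real$ requires such weights), so the argument must exploit the low-frequency regularity of $\hat u$ encoded by the weighted norm to bound the mass of the low-frequency factor near the origin and thereby absorb both the singular behaviour of the resonance and the surplus power of the large frequency. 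A crude bilinear Cauchy--Schwarz recovers only part of the gain, and the content of the estimate is to use the weight sharply — together with the frequency localization — so as to reach the endpoint $\epsilon<s$; reconciling the vanishing resonance with the weighted space and extracting the precise threshold $\min\{s,1\}$ is where I expect the real work to lie. Once both bounds are in place, Theorem \ref{thm:nonlinearsmooth} yields the nonlinear smoothing of order $\epsilon<\min\{s,1\}$ on $H^s\cap L^2(|x|^{s/2}dx)$.
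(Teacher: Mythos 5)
Your setup is right and you correctly locate the obstruction (the high$\times$low interaction $|\xi_2|\ll 1$, where $\jap{\Phi}^{-\sigma}$ degenerates), but at exactly that point the proposal stops: you state that "the content of the estimate is to use the weight sharply" and that this is "where the real work lies," without giving any mechanism for doing so. That unproven step is the entire analytic content of the corollary, so the proposal has a genuine gap rather than a complete alternative proof. The paper's resolution is concrete: the weight $L^2(|x|^{s/2}dx)$ means $\hat u\in H^{s/2}$ in the frequency variable, hence by Sobolev embedding $\hat u\in L^\lambda$ locally for any $\lambda\le 2/(1-s)^+$. One then augments the iteration norm to
\begin{equation}
\|u\|_{\mathcal{H}^s}:=\|u\|_{H^s}+\|\hat u\|_{L^\lambda(-1,1)},
\end{equation}
chooses $\sigma\in(1/2,1)$ and $1<\rho,\lambda<\infty$ with
\begin{equation}
1-\sigma=\frac{1}{\rho}+\frac{1}{\lambda},\qquad \rho(2\sigma-1-\epsilon)>1,\qquad \lambda\le 2/(1-s)^+,
\end{equation}
and closes the region $|\xi_2|<1$ by a Hardy--Littlewood--Sobolev / weak-H\"older argument: schematically
\begin{equation}
I_{\sigma,\epsilon}\lesssim \|v_1\|_{L^2}\left\|\frac{v}{\jap{\xi}^{2\sigma-1-\epsilon}}\right\|_{L^{r}}\left\|\frac{v_2}{|\cdot|^{\sigma}}\right\|_{L^{q}_w(-1,1)}
\lesssim \|v_1\|_{L^2}\|v\|_{L^2}\|v_2\|_{L^\lambda},
\end{equation}
with $1/r=1/2+1/\rho$ and $1/q=\sigma+1/\lambda$. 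The compatibility of these three exponent conditions is precisely where the threshold $\epsilon<\min\{s,1\}$ comes from; your proposal never produces this quantitative link between the weight and the admissible $\epsilon$.

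Two further points you elide. First, once the bounds only hold in $\mathcal{H}^s$ rather than $H^s$, Theorem \ref{thm:nonlinearsmooth} cannot be invoked as a black box: both estimates must be re-proven with $\mathcal{H}^s$ on \emph{both} sides (including the easy $L^\lambda(-1,1)$ bounds on the outputs $T_\sigma$ and $T^{\alpha,M}$), and the infinite normal form induction rerun with $\mathcal{H}^s$ in place of $H^s$ — the paper states this explicitly. Second, this requires that the flow itself stays bounded in $H^s\cap L^2(|x|^{s/2}dx)$ at the relevant (low) regularities; you gesture at \cite{nahasponce}, but for $s<1$ the paper has to prove this persistence separately (Appendix A, via Tartar's nonlinear interpolation theorem), so it is an ingredient you would also need to supply, not merely cite.
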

	
	\item The cubic nonlinear Schr\"odinger equation in dimension one:
	\begin{equation}\label{nls}\tag{NLS}
	iu_t + u_{xx} \pm |u|^2u = 0,\quad u_0\in H^s(\real).
	\end{equation}
	The local well-posedness theory for $s\ge 0$ is a consequence of the standard Strichartz estimates. This range is optimal (\cite{CCT, KPV4}), in the sense that uniform continuous dependence on the initial data fails for any $s<0$. Regarding the known nonlinear smoothing results, we refer to \cite[Proposition 4.1]{BPSS}, where it has been shown that, for $s>1/4$, one has a gain of $1/2$ derivatives and, for $s>1/2$, one achieves a gain of a full derivative. 
	In \cite{tzirakis7}, the authors showed that,
	on $\T$, there is an increase in regularity of order $\min\{2s,1\}$, for $s>1/4$, which agrees with the previous 
	values for the whole line. We prove exactly the same range of gains, on $\real$, for $s>0$.
	\begin{cor}\label{teo:nls}
		For any $s>0$, the \eqref{nls} flow on $H^s(\real)$ presents a nonlinear smoothing of any order $\epsilon < \min\{2s,1\}$.
	\end{cor}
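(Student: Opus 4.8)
The plan is to verify the two hypotheses of Theorem~\ref{thm:nonlinearsmooth} and then invoke it directly; the local well-posedness assumption holds on $H^s(\real)$ for every $s\ge 0$ by the classical $L^4$ Strichartz theory. Writing \eqref{nls} in the form \eqref{eq:geral}, one reads off $L(\xi)=\xi^2$, nonlinearity order $k=3$, and a constant multiplier $m\equiv\pm i$ (reflecting that the nonlinearity carries no derivatives, so $|m|=1$). The decisive structural input is the resonance relation: with $\xi_1,\xi_3$ the two unconjugated frequencies and $\xi_2$ the conjugated one, the constraint $\xi=\xi_1-\xi_2+\xi_3$ yields
\[
\Phi(\Xi)=\xi^2-\xi_1^2+\xi_2^2-\xi_3^2=2(\xi-\xi_1)(\xi-\xi_3),
\]
so that $\Phi$ factors as a product of two frequency differences. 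It is this factorization—the resonant set being the two transversal hyperplanes $\{\xi_1=\xi\}$ and $\{\xi_3=\xi\}$, rather than a single large factor—that organizes the entire argument. Thus it suffices to establish \eqref{eq:basicaepsilon} and \eqref{eq:frequencyrestricted} for some $\sigma\in(0,1)$ arbitrarily close to $1$ and every $\epsilon<\min\{2s,1\}$.

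To prove \eqref{eq:basicaepsilon} I would pass to the renormalized inputs $\hat v_j=\jap{\xi_j}^s\hat u_j$ and to the variables $p=\xi-\xi_1$, $q=\xi-\xi_3$, so that $\Phi=2pq$ and the resonant set is $\{p=0\}\cup\{q=0\}$; a Cauchy--Schwarz (Schur-type) bound then reduces matters, in the non-resonant regime, to controlling $\sup_\xi\int|K|^2\,dp\,dq<\infty$ with $K=\jap\xi^{s+\epsilon}\jap{\xi_1}^{-s}\jap{\xi_2}^{-s}\jap{\xi_3}^{-s}\jap{\Phi}^{-\sigma}$, after a dyadic decomposition in the output frequency and in the modulation $|\Phi|\sim M$. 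The high-high-high \emph{corner}, where all three inputs lie within $O(1)$ of $\xi$ and $\Phi=O(1)$ offers no modulation gain, is precisely where the bound $\epsilon<2s$ appears: there the gain is capped by the product structure, i.e. by the failure of the embedding $H^s(\real)\hookrightarrow L^\infty$ for $s\le 1/2$. Near each resonant hyperplane—say $q\approx 0$, where $\xi_3\approx\xi$ and the constraint forces $\xi_1\approx\xi_2$—the crude estimate is wasteful, and I would instead extract the high factor $\hat u_3(\xi)$ and control the remaining correlation $\int\hat u_1\,\hat u_2$ in $L^\infty$; summing the modulation contributions $\jap\xi^{-1}\sum_{M\lesssim\jap\xi}M^{1-\sigma}\sim\jap\xi^{-\sigma}$ gives a net smoothing factor $\jap\xi^{-\sigma}$ and hence the complementary constraint $\epsilon<\sigma\to 1$. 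The two regimes combine to the sharp threshold $\epsilon<\min\{2s,1\}$.

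For \eqref{eq:frequencyrestricted} I would again use the coordinates $(p,q)$, in which the localization $|\Phi-\alpha|<M$ becomes $|2pq-\alpha|<M$: for fixed $p$ the admissible $q$ fill an interval of length $\lesssim M/|p|$. Integrating the density $\jap\xi^{s}\jap{\xi_1}^{-s}\jap{\xi_2}^{-s}\jap{\xi_3}^{-s}$ over this slab and applying Cauchy--Schwarz produces the square-root gain $\jap M^{1/2^+}$—this is morally the bilinear $L^4_{t,x}$ Strichartz estimate with the modulation pinned down—while the mild logarithmic divergence coming from the degenerate direction $p\approx 0$ is absorbed into the harmless factor $\jap\alpha^{0^+}$; the Sobolev weights with $s>0$ are what guarantee convergence of the $p$-integral.

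The principal difficulty lies entirely in the near-resonant analysis of \eqref{eq:basicaepsilon}. On the hyperplanes $p=0$ and $q=0$ one meets the reduction's counterpart of the resonant term $\|u\|_{L^2}^2 u$, for which $\jap{\Phi}^{-\sigma}$ supplies no help, so the smoothing must be extracted purely from the modulation \emph{measure} (through the correlation/$L^\infty$ device) together with the Sobolev weights. Calibrating the dyadic modulation sum so that the non-resonant, near-resonant, and corner contributions each saturate at exactly $\min\{2s,1\}$—and no lower—is the delicate point; once these two estimates are secured, the passage to nonlinear smoothing on $H^s(\real)$ is supplied wholesale by Theorem~\ref{thm:nonlinearsmooth}.
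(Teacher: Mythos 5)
Your proposal follows essentially the same route as the paper's proof: reduce to Theorem~\ref{thm:nonlinearsmooth}, then verify \eqref{eq:basicaepsilon} and \eqref{eq:frequencyrestricted} via Cauchy--Schwarz/Schur bounds in the variables $\tilde{\xi}_1=\xi-\xi_1$, $\tilde{\xi}_3=\xi-\xi_3$, exploiting the factorization $\Phi=2(\xi-\xi_1)(\xi-\xi_3)$, with a change of variables to $\Phi$ away from the resonant hyperplanes and a correlation ($L^\infty$-type) argument near them; the two constraints $\epsilon<2s$ and $\epsilon<1$ arise in exactly the regimes you identify (all frequencies comparable with $\Phi=O(1)$, resp.\ near a single hyperplane), matching the paper's case analysis. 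One bookkeeping caveat: the degenerate strip $\tilde{\xi}_1\approx 0$ in \eqref{eq:frequencyrestricted} cannot be absorbed into $\jap{\alpha}^{0^+}$ (that factor is useless when $\alpha$ is small) --- the paper disposes of that strip for every $s\ge 0$ by a direct Cauchy--Schwarz correlation bound costing no powers of $M$ or $\alpha$ at all, whereas your weight-based convergence of the $p$-integral works only for $s>0$ with constants degenerating as $s\to 0$, which is nevertheless sufficient for the corollary as stated.
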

\item The modified Zakharov-Kuznetzov equation:
\begin{equation}\label{mzk}\tag{mZK}
u_t + \partial_x\Delta u = \partial_x(u^3),\quad u_0\in H^s(\real^2).
\end{equation}
The best known range for the local well-posedness theory is $s>1/4$ (\cite{ribaudvento}) and it remains unknown whether it is optimal, as the scaling critical space index is $s_c=0$. 
A key observation made in \cite{grunrockherr} is the fact that, through a simple change of coordinates, one may symmetrize the \eqref{mzk}:
$$
u_t + u_{xxx} + u_{yyy} = (\partial_x + \partial_y)(u^3).
$$
The separation between the two spatial directions allows the application of more refined dispersive estimates. Using this symmetrization, we prove
%
 \begin{cor}\label{teo:mZK}
	For any $s>3/2$, the \eqref{mzk} flow on $H^s(\real^2)$ presents a nonlinear smoothing of any order $\epsilon < \min\{ 2s-3, 1\}$.
\end{cor}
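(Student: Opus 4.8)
The plan is to reduce Corollary \ref{teo:mZK} to the two multilinear bounds \eqref{eq:basicaepsilon} and \eqref{eq:frequencyrestricted} of Theorem \ref{thm:nonlinearsmooth}, applied to the symmetrized form $u_t+u_{xxx}+u_{yyy}=(\partial_x+\partial_y)(u^3)$ obtained in \cite{grunrockherr}. Writing the spatial frequency of each factor as a vector $\zeta_j=(\xi_j,\eta_j)\in\real^2$ with $\zeta_0=\zeta_1+\zeta_2+\zeta_3$, the two directions decouple, and the cubic identity $(a+b+c)^3-a^3-b^3-c^3=3(a+b)(b+c)(c+a)$ applied in each component gives
\[
\Phi(\Xi)=3(\xi_1+\xi_2)(\xi_2+\xi_3)(\xi_3+\xi_1)+3(\eta_1+\eta_2)(\eta_2+\eta_3)(\eta_3+\eta_1),
\]
while the derivative in the nonlinearity yields a symbol with $|m(\Xi)|\lesssim\jap{\zeta_0}$; here $k=3$. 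It then remains to verify \eqref{eq:basicaepsilon} with gain $\epsilon<\min\{2s-3,1\}$ and \eqref{eq:frequencyrestricted}, for $s>3/2$ and a suitable $\sigma\in(0,1)$.

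For \eqref{eq:basicaepsilon} I would perform a Littlewood--Paley decomposition and, by symmetry, assume the first input carries the largest frequency $\Lambda$, splitting the argument by how many inputs are high. When only one input is high (high-low-low), the symmetrized phase is non-degenerate: with $\zeta_2,\zeta_3$ low one has $\Phi\approx 3[(\xi_1)^2(\xi_2+\xi_3)+(\eta_1)^2(\eta_2+\eta_3)]$, so $|\Phi|\sim\Lambda^2$ away from a thin sliver, and the divisor $\jap{\Phi}^{-\sigma}$ recovers $\sim\Lambda^{-2\sigma}$ derivatives; letting $\sigma\uparrow 1$ this absorbs the derivative from $m$ and yields $\epsilon<1$, the near-resonant sliver being controlled through its $O(\Lambda^{-4})$ measure by Cauchy--Schwarz. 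When two or three inputs are high the resonance may degenerate, so I discard the divisor and close by a pure product estimate: bounding the output in $L^2$ by Young's inequality in the form $L^2\cdot L^1\cdot L^1$ and paying one Bernstein factor $\Lambda$ for each high input placed in $L^1$, the total weight is $\Lambda^{s+\epsilon}\cdot\Lambda\cdot\Lambda^{-s}\cdot(\Lambda^{1-s})^2\sim\Lambda^{\epsilon+3-2s}$, which sums over dyadic $\Lambda$ precisely when $\epsilon<2s-3$. (In the high-high-low case one has in fact $|\Phi|\sim\Lambda^3$, so the divisor may be retained and this case is not the bottleneck.) Combining the regimes gives \eqref{eq:basicaepsilon} with $\epsilon<\min\{2s-3,1\}$.

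For \eqref{eq:frequencyrestricted} the same decomposition applies, the only new point being to produce the factor $\jap{M}^{1/2^+}$ from the restriction $|\Phi-\alpha|<M$. In the all-high regime the crude product estimate above, now without the smoothing exponent $\epsilon$, gives the weight $\Lambda^{3-2s}$, summable for $s>3/2$ and hence bounded by $\jap{M}^{1/2^+}$ trivially; in the other regimes the restriction confines the convolution to an $M$-slab of the resonant variety whose transverse measure is $\sim M$, so a single Cauchy--Schwarz produces exactly $\jap{M}^{1/2}$, with the harmless $\jap{\alpha}^{0^+}$ absorbing the dyadic summation. With both bounds in hand, Theorem \ref{thm:nonlinearsmooth} yields the nonlinear smoothing of order $\epsilon<\min\{2s-3,1\}$.

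The main obstacle is the all-high interaction, where the resonance can vanish on a set of positive measure and the divisor provides no gain. There one is forced back onto the pure two-dimensional product estimate, and the two Bernstein losses $\Lambda$—one for each high frequency, set against the single derivative from $m$ and three $H^s$ weights—are exactly what pin the threshold $s>3/2$ and cap the gain at $2s-3$ for $3/2<s<2$. It is precisely the Grünrock--Herr symmetrization that rescues the high-low-low regime, making $\Phi$ non-degenerate there and letting the divisor deliver the full gain $\epsilon<1$; without it the available smoothing would be strictly weaker. A sharper exploitation of the measure of the near-resonant set would improve the crude exponent, but the simple product bound already suffices for the stated range.
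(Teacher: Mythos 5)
Your overall strategy coincides with the paper's: the Gr\"unrock--Herr symmetrization, the factorized phase (in the paper's notation $\Phi=\tilde{x}_1\tilde{x}_2\tilde{x}_3+\tilde{y}_1\tilde{y}_2\tilde{y}_3$, $\tilde{x}_j=x-x_j$), the bound $|m(\Xi)|\lesssim\jap{\zeta_0}$, the crude product estimate when at least two inputs are high (this is the paper's Case 1, $|\xi_2|\gtrsim|x|$, and your exponent count $\Lambda^{s+\epsilon}\cdot\Lambda\cdot\Lambda^{-s}\cdot(\Lambda^{1-s})^2=\Lambda^{3+\epsilon-2s}$ reproduces its constraint $\epsilon<2s-3$, $s>3/2$), and the outline for \eqref{eq:frequencyrestricted} (change of variables along a nondegenerate phase gradient giving the $M$-slab factor $M^{1/2}$, product estimate in the degenerate all-comparable case) all match. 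The genuine gap is in the single-high-input (high-low-low) regime of \eqref{eq:basicaepsilon}, which is precisely the case that produces the cap $\epsilon<1$.

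In that regime $\Phi\approx \tilde{x}_2\tilde{x}_3(x_2+x_3)+\tilde{y}_2\tilde{y}_3(y_2+y_3)\approx x_1^2(x_2+x_3)+y_1^2(y_2+y_3)$, and its zero set is \emph{not} a sliver of measure $O(\Lambda^{-4})$: that figure would be the measure of the set where the two one-dimensional resonances are small \emph{separately}, whereas $\Phi$ vanishes on the whole codimension-one hypersurface where the two components \emph{cancel each other}. Transversally to this hypersurface (the $x_1$-, equivalently $x_3$-, direction, where $|\partial_{x_1}\Phi|\sim\Lambda^2$) the set $\{|\Phi|\le K\}$ has measure $\sim K/\Lambda^2$, and $O(1)$ weighted measure in the remaining directions. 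This kills your argument: inside the sup--Cauchy--Schwarz frame \eqref{eq:sup} the multiplier enters \emph{squared}, so while the region $\{|\Phi|\gtrsim\Lambda^2\}$ contributes an acceptable $\Lambda^{2+2\epsilon-4\sigma}$, the near-resonant region contributes at best
\begin{equation}
\Lambda^{2+2\epsilon}\int_{\{|\Phi|\ll\Lambda^2\}}\frac{d\zeta_2\,d\zeta_3}{\jap{\zeta_2}^{2s}\jap{\zeta_3}^{2s}\jap{\Phi}^{2\sigma}}\ \sim\ \Lambda^{2+2\epsilon}\cdot\frac{1}{\Lambda^{2}}\int\frac{d\Phi}{\jap{\Phi}^{2\sigma}}\ \sim\ \Lambda^{2\epsilon},
\end{equation}
because for $2\sigma>1$ the $\Phi$-integral saturates at $O(1)$ and returns only the Jacobian $\Lambda^{-2}$; an uncompensated $\Lambda^{2\epsilon}$ survives, so this route proves nothing for any $\epsilon>0$. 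This is exactly why the paper abandons \eqref{eq:sup} in its Case 2b ($|\Phi|\ll|x|^2$) and runs a \emph{double Schur test} (first on $K(\xi_2,\xi_3)$, then on $K_2(x,x_1)$): a Schur bound uses only the \emph{first} power of the kernel, and the single power $\jap{\Phi}^{-\sigma}$ with $\sigma<1$ remains integrable across the resonant hypersurface. Quantitatively, with $\Psi=\Phi/(\tilde{x}_2\tilde{x}_3)=x-x_1+\tilde{y}_1\tilde{y}_2\tilde{y}_3/(\tilde{x}_2\tilde{x}_3)$ and the change of variables $x_1\to\Psi$ (Jacobian $\approx1$),
\begin{equation}
\int_{|\Phi|<|x|^2}\frac{x^{1+\epsilon}}{\jap{\Phi}^{\sigma}}\,dx_1\ \lesssim\ x^{1+\epsilon-2\sigma}\int_{|\Psi|<1}\frac{d\Psi}{|\Psi|^{\sigma}}\ \lesssim\ 1,\qquad 2\sigma\ge1+\epsilon,\ \sigma<1,
\end{equation}
which is what delivers the gain up to $\epsilon<1$. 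Without this (or an equivalent $L^1$-kernel argument) your high-low-low case does not close, and Corollary \ref{teo:mZK} is not proved. A minor, harmless slip on top of this: the claim that $|\Phi|\sim\Lambda^3$ in the high-high-low configuration is false in general (the two cubic components can cancel there as well), but you cover that case by the product bound anyway.
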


\begin{nb}
Here, the minimal $s$ is $3/2$, which is quite far from the known local well-posedness theory. In this example, our aim was to show that this technique is not restricted to the one-dimensional case and that even very crude estimates yield some degree of nonlinear smoothing. We believe that this result can be improved through a more careful analysis of the bounds \eqref{eq:basicaepsilon} and \eqref{eq:frequencyrestricted}. In fact, in a parallel paper \cite{FLPS}, it is shown that a stronger smoothing property holds, based on the usual Kato smoothing estimate. In a future work, we will address bidimensional problems in detail and present more refined results.
\end{nb}

\item The derivative nonlinear Schr\"odinger equation
\begin{equation}\label{dnls}\tag{dNLS}
iu_t + u_{xx} + i\partial_x(|u|^2u)=0.
\end{equation}
As it was proven in \cite{Takaoka}, the initial value problem is well-posed in $H^s(\real)$ for $s\ge 1/2$, using the Fourier restriction method. Recently, the normal form reduction has been applied in \cite{mosincatyoon} to prove unconditional uniqueness for any $s\ge 1/2$, that is, uniqueness without the auxiliary Bourgain space. In either case, it is necessary to apply a (bijective) gauge transformation
$$
w(t,x)= \exp\left(-i\int_{-\infty}^x |u(t,y)|^2 dy\right)u(t,x)
$$
which connects \eqref{dnls} with
\begin{equation}\label{gdnls}\tag{dNLS*}
iw_t + w_{xx} = - i |w|^2 \partial_x \bar{w} - \frac{1}{2}|w|^4w.
\end{equation}
 For \eqref{gdnls}, we prove the following.
\begin{cor}\label{cor:dnls}
	For any $s>1/2$, the \eqref{gdnls} flow on $H^s(\real)$ presents a nonlinear smoothing of any order $\epsilon<\min \{ 2s-1, 1/2 \}$.
\end{cor}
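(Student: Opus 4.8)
The plan is to apply Theorem \ref{thm:nonlinearsmooth}, in the more general form allowed by \eqref{eq:frequencyrestrictedgen} and Remark \ref{gammabeta}, to the gauged equation \eqref{gdnls}. Its nonlinearity splits into a cubic piece $-i|w|^2\partial_x\bar w=-iw\bar w\,\partial_x\bar w$ carrying one derivative, and a quintic piece $-\tfrac12|w|^4w=-\tfrac12 w^3\bar w^2$ with no derivatives. It therefore suffices to verify the two bounds \eqref{eq:basicaepsilon} and \eqref{eq:frequencyrestricted}/\eqref{eq:frequencyrestrictedgen} for each of the two associated multilinear operators; the resulting nonlinear smoothing holds with $\epsilon$ equal to the smaller of the two admissible gains. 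The quintic term is the easy one: its symbol is constant, so for $s>1/2$ — where $H^s(\real)$ is a Banach algebra — one may set the oscillatory factor $\jap{\Phi}^{-\sigma}$ aside and close \eqref{eq:basicaepsilon} directly by Hölder and Sobolev embedding, while \eqref{eq:frequencyrestricted} follows from the crude restriction estimate (the measure of the $\Phi$-slab contributing the factor $\jap{M}^{1/2}$, with no $\alpha$-dependence needed). The entire difficulty is thus concentrated in the cubic term.

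For the cubic term, with the Schr\"odinger symbol $L(\xi)=\xi^2$ and the conjugation pattern $(w,\bar w,\partial_x\bar w)$, the frequency constraint is $\xi=\xi_1-\xi_2-\xi_3$ and the resonance function is
\[
\Phi(\Xi)=\xi^2-\xi_1^2+\xi_2^2+\xi_3^2=-2\left(\xi\xi_2+\xi\xi_3+\xi_2\xi_3\right),
\]
while the derivative on the third factor produces the symbol $m(\Xi)=c\,\xi_3$, a genuine loss of one derivative. The heart of the argument is to recover this lost derivative together with the gain $\epsilon$ from the oscillation. For \eqref{eq:basicaepsilon} I would decompose dyadically and split into cases according to the relative sizes of $\jap{\xi_1},\jap{\xi_2},\jap{\xi_3}$. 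Away from resonance the bilinear structure of $\Phi$ forces $\jap{\Phi}$ to be comparable to the product of the largest frequency with a frequency gap, so that $\jap{\Phi}^{-\sigma}$ returns smoothing of size roughly $\jap{\xi_{\max}}^{-2\sigma}$; balancing the loss $\jap{\xi_3}$ against the Sobolev weight $\jap{\xi}^{s+\epsilon}/\big(\jap{\xi_1}\jap{\xi_2}\jap{\xi_3}\big)^s$ is exactly what yields the constraint $\epsilon<2s-1$, the single lost derivative plus the gain being paid for by the surplus regularity $2s-1$ available above the algebra threshold $s=1/2$.

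The remaining and most delicate point is the frequency-restricted bound, which is where the derivative cannot be accommodated by the bare $\jap{\alpha}^{0^+}$ of \eqref{eq:frequencyrestricted}. On the slab $|\Phi-\alpha|<M$ the factor $\jap{M}^{1/2}$ measures a level set of the quadratic form $\Phi$, and it is precisely this measure that caps the attainable gain at $1/2$, explaining the threshold $\min\{2s-1,1\}$ being replaced here by $\min\{2s-1,1/2\}$. Since $\Phi$ is quadratic, the restriction lets one trade a power $\jap{\alpha}^{\gamma}$ for about $\jap{\xi_{\max}}^{2\gamma}$, so covering the lost derivative forces $\gamma$ close to $1/2$; this is exactly why the cubic term must be handled through the generalized estimate \eqref{eq:frequencyrestrictedgen} rather than \eqref{eq:frequencyrestricted}. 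The main obstacle is then to organize the case analysis so that a single admissible choice of exponents satisfies the compatibility constraints $\gamma+\beta<1$ and $\sigma+\gamma<1$ while simultaneously recovering the derivative and delivering the gain $\epsilon<\min\{2s-1,1/2\}$. Once both bounds hold with such $(\sigma,\gamma,\beta,\epsilon)$, Theorem \ref{thm:nonlinearsmooth} together with Remark \ref{gammabeta} gives the asserted nonlinear smoothing for \eqref{gdnls}.
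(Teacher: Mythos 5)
Your high-level skeleton matches the paper's (apply Theorem \ref{thm:nonlinearsmooth} via Remark \ref{gammabeta}, splitting the nonlinearity into the cubic derivative term and the quintic term), but there are two substantive errors and one large omission. First, your treatment of the quintic term is wrong: estimate \eqref{eq:basicaepsilon} demands a \emph{gain} of $\epsilon$ derivatives, and this cannot be obtained by ``setting the oscillatory factor $\jap{\Phi}^{-\sigma}$ aside'' and invoking the algebra property of $H^s$ --- without the phase, the bound $\|u_1u_2u_3\bar u_4\bar u_5\|_{H^{s+\epsilon}}\lesssim \prod_j\|u_j\|_{H^s}$ fails for any $\epsilon>0$ (take one factor at frequency $K$ and four at frequency $O(1)$; the left side is of size $K^{s+\epsilon}$, the right of size $K^s$). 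The algebra property only disposes of the \emph{frequency-restricted} bound for the quintic piece. The paper's Lemma on the quintic term proves \eqref{eq:basicaepsilon} using the phase through the change of variables $\xi_1\mapsto\Phi$ (or $\xi_2\mapsto\Phi$), exploiting $|\partial_{\xi_1}\Phi|=2|\xi_1-\xi_5|\gtrsim|\xi|$, and it is precisely this step, producing the factor $\jap{\xi}^{2\epsilon-1}$, that caps the gain at $\epsilon\le 1/2$. You instead attribute the $1/2$ cap to the $\jap{M}^{1/2}$ factor in the cubic frequency-restricted bound, which cannot be right: that same factor appears in the mKdV, NLS and KdV estimates, where the gain reaches a full derivative.

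Second, your resonance function for the cubic term does not match the one the paper works with. The paper uses $\Phi=2(\xi-\xi_1)(\xi-\xi_3)=2(\xi_2-\xi_1)(\xi_2-\xi_3)$ with $m(\Xi)=\xi_2$, i.e.\ the conjugation pattern of $w^2\partial_x\bar w$ (the standard gauged dNLS nonlinearity of Mosincat--Yoon; the displayed form $|w|^2\partial_x\bar w$ in the paper is best read with that structure). Your pattern $(w,\bar w,\partial_x\bar w)$ yields $\Phi=-2(\xi\xi_2+\xi\xi_3+\xi_2\xi_3)$, a nondegenerate quadratic form whose zero set is a \emph{cone}, not a union of hyperplanes; it admits no factorization into linear factors. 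All of your subsequent heuristics --- ``$\jap{\Phi}$ comparable to the largest frequency times a frequency gap,'' trading $\jap{\alpha}^\gamma$ for $\jap{\xi_{\max}}^{2\gamma}$, the changes of variables $\xi_j\mapsto\Phi$ organized by the two linear factors --- presuppose that factorization and do not survive on a conical resonance set, so the cubic case analysis would have to be rebuilt from scratch. Moreover, your guess that one is forced to take $\gamma$ close to $1/2$ is incompatible with the constraint $\sigma+\gamma<1$, since the cubic \eqref{eq:basicaepsilon} requires $\sigma>1/2$; the paper's proof in fact produces $\gamma=1/4$, $\beta=1/2$ in the single problematic regime $|\xi_1|,|\xi_3|\ll|\xi|$, $\xi_2\sim\xi$, $\Phi\sim\xi^2$. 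Finally, even granting the correct setup, your proposal stops exactly where the proof begins: the multi-case Cauchy--Schwarz, Schur-test and change-of-variables arguments establishing \eqref{eq:basicaepsilon} and \eqref{eq:frequencyrestrictedgen} for the cubic term, and \eqref{eq:basicaepsilon} for the quintic term, are the entire content of the paper's three lemmas, and none of them is carried out here.
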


\end{enumerate}

\begin{nb}
Excluding the \eqref{mzk} example, we prove that some degree of nonlinear smoothing holds whenever $s$ is larger than the limit for the local well-posedness theory. We conjecture that this property holds for general dispersive equations. More precisely, if some degree of nonlinear smoothing of order $\epsilon$ is valid on $H^{s_1}$, for some large $s_1$, and if the initial value problem is locally well-posed on $H^{s_0}$, $s_0<s_1$, then one should be able to prove nonlinear smoothing on $H^s$, for any $s_0<s<s_1$. Moreover, the gain of regularity should be at least $\theta\epsilon$, where $s=\theta s_1 + (1-\theta)s_0$.
\end{nb}


\section{Description of the method and proof of the main results}

In this section, we explain how the infinite normal form reduction method introduced in \cite{guo}, with precursors
in \cite{babin} and \cite{ko}, can be used to prove nonlinear smoothing. To keep the exposition concise, we present briefly the general idea of the INFR as in \cite[Section 3]{koy} and indicate the specific modifications that allow the improvement in regularity. For more details regarding the specifics of the INFR, we advise the reader to consult \cite{ko, koy, mosincatyoon}.

\subsection{The algorithm of the INFR}

As already mentioned, the equation for the Fourier transform in space of the profile $\tilde{u}(t)=\mathcal{F}(G(-t)u(t))$ reads
\begin{align}
\tilde{u}(t,\x)&=\tilde{u}(0,\x) + \int_0^t\int_{\x_1+\dots+\x_k=\x} e^{is\Phi(\Xi)}m(\Xi)\tilde{u}(s, \x_1)\dots \tilde{u}(s, \x_k)d\x_1\dots\nonumber d\x_{k-1} ds\\& = \tilde{u}(0,\x) + \int_0^t \mathcal{N}^{(1)}(\tilde{u}(s))ds .\label{eq:tildeu1}
\end{align}
The common procedure at this point, in the normal form reduction method first used by Shatah \cite{shatah}, is to integrate by parts in time once as in the (non)stationary phase method, to exploit the oscillating factor
in the nonlinear term. This has the advantage of producing increased regularity
due to the powers of the frequency in the phase $\Phi(\Xi)$ that the method generates in the denominator, but only as long as the frequencies remain
away from the problematic resonance $\Phi(\Xi)=0$. Of course the difficulty of this method rests precisely on somehow controlling these problematic resonances. Erdo\u{g}an and Tzirakis, first inspired by
a normal form reduction method for the KdV equation in \cite{babin}, base their general methodology for proving 
nonlinear smoothing on controlling the resonance through the Fourier restriction norm method of Bourgain, or
$X^{s,b}$ spaces. However, they do mention (\cite{tzirakis7}) that this technique is mostly useful for equations on
the torus $\T$, as it becomes harder to apply for the real line $\real$. The breakthrough idea, in \cite{guo}, was not to stop the integration by parts after a finite number of times, but instead to proceed infinitely many times.

For a fixed $N>1$ (which will be determined later), one splits the frequency domain into the near-resonant and nonresonant regions, depending on whether $|\Phi|$ is smaller or greater than $N$. We use the subscripts $1$ for the near-resonant term and $2$ for the nonresonant one:
$$
\tilde{u}(t,\x)=\tilde{u}(0,\x) + \int_0^t \mathcal{N}_1^{(1)}(\tilde{u}(s)) +\mathcal{N}_2^{(1)}(\tilde{u}(s)) ds.
$$
Using \eqref{eq:basicaepsilon}, 

\begin{equation}\label{eq:primeirotermo}
\left\|\mathcal{F}^{-1}\left(\mathcal{N}_1^{(1)}(\tilde{u}(t))\right)\right\|_{H^{s+\epsilon}}\lesssim N^\sigma\|T_\sigma(\tilde{u},\dots,\tilde{u})(t)\|_{H^{s+\epsilon}}\lesssim N^\sigma\|u(t)\|_{H^s}^{k}.
\end{equation}

For $\mathcal{N}_2^{(1)}$, one integrates by parts in time, using the relation
$$
e^{is\Phi}=\partial_s\left(\frac{1}{i\Phi}e^{is\Phi}\right)
$$
so that
$$
\int_0^t \mathcal{N}_2^{(1)}(\tilde{u}(s)) ds = \left[\mathcal{N}_0^{(2)}(\tilde{u}(s)) \right]_{s=0}^{s=t} + \int_0^t \mathcal{N}^{(2)}(\tilde{u}(s)) ds.
$$
Due to the factor $1/\Phi$ and the restriction on the frequency domain, the boundary terms are easily bounded:
$$
\left\|\mathcal{F}^{-1}\left(\mathcal{N}_0^{(2)}(\tilde{u}(s))\right)\right\|_{H^{s+\epsilon}}\lesssim \frac{1}{N^{1-\sigma}}\|T_\sigma(\tilde{u},\dots,\tilde{u})(s)\|_{H^{s+\epsilon}}\lesssim N^{1-\sigma}\|u(s)\|_{H^s}^{k}.
$$
For the remainder $\mathcal{N}^{(2)}$, one uses \eqref{eq:tildeu1} to replace $\tilde{u}_t$ and obtain a new oscillatory integral which is of order $2(k-1)+1$ in $\tilde{u}$. 

So far, we have rewritten \eqref{eq:tildeu1} as
$$
\tilde{u}(t,\xi) = \tilde{u}(0,\xi) + \mbox{controllable terms} + \int_0^t \mathcal{N}^{(2)}(\tilde{u}(s))ds.
$$

This concludes the first step in the INFR. One may now apply a recursive algorithm to expand the remainder integral $\mathcal{N}^{(2)}$. Fix $0<\delta<1$ and set
$$
\beta_j=((k-1)(j+1)+1)^k,\quad j\in \mathbb{N}.
$$
\vskip10pt
\underline{At the $J$-th step,}
\vskip10pt
\textit{Step 1.} Split the frequency domain into
\begin{itemize}
	\item Near-resonant: $\mbox{|Phase|}<\beta_J\max\{\mbox{|Phase at step 1|}^{1-\delta}, \mbox{|Phase at previous step|}^{1-\delta}\}$;
	\item Nonresonant: $\mbox{|Phase|}>\beta_J\max\{\mbox{|Phase at step 1|}^{1-\delta}, \mbox{|Phase at previous step|}^{1-\delta}\}$;
\end{itemize}
and write $$\int_0^t\mathcal{N}^{(J+1)}(\tilde{u}(s))ds=\int_0^t\mathcal{N}_1^{(J+1)}(\tilde{u}(s))+\mathcal{N}_2^{(J+1)}(\tilde{u}(s))ds.
$$
\textit{Step 2.} Integrate by parts in time the nonresonant term:
$$
\int_0^t \mathcal{N}_2^{(J+1)}(\tilde{u}(s))ds = \left[\mathcal{N}_0^{(J+2)}(\tilde{u}(s))  \right]_{s=0}^{s=t} + \int_0^t \mathcal{N}^{(J+2)}(\tilde{u}(s))ds.
$$
\textit{Step 3.} Use \eqref{eq:tildeu1} to replace all instances of $\tilde{u}_t$ in order to obtain an oscillatory integral of order $(k-1)(J+2)-1$ in $\tilde{u}$.

\textit{Step 4.} Repeat the algorithm for $\mathcal{N}^{(J+2)}(\tilde{u})$.

\vskip10pt

After an infinite amount of steps, one formally obtains the normal form equation
\begin{equation}\label{eq:NFE}\tag{NFE}
\tilde{u}(t,\xi)=\tilde{u}(0,\xi) + \sum_{j\ge 2} \left[\mathcal{N}_0^{(j)}(\tilde{u}(s))\right]_{s=0}^{s=t} + \int_0^t \sum_{j\ge 1} \mathcal{N}_1^{(j)}(\tilde{u}(s))ds.
\end{equation}

\subsection{Bounds for the \eqref{eq:NFE}}

If one does not desire to gain any regularity, \eqref{eq:frequencyrestricted} is enough to control all the terms in the normal form equation:
\begin{lem}[\cite{koy}, Lemmas 3.15 and 3.16]\label{lem:koy}
 Assume \eqref{eq:frequencyrestricted}. Then, for any $J\ge 1$,
 
\begin{equation}
 \left\|\mathcal{F}^{-1}\left(\mathcal{N}_0^{(J+1)}(\tilde{u}(t))\right)\right\|_{H^s}\lesssim N^{-\frac{1}{2} - (J-1)\frac{1-\delta}{2} + 0^+}\|u(t)\|_{H^s}^{(k-1)J+1}
\end{equation}

\begin{equation}\label{eq:estlema316}
 \left\|\mathcal{F}^{-1}\left(\mathcal{N}_1^{(J+1)}(\tilde{u}(t))\right)\right\|_{H^s}\lesssim N^{-\frac{1}{2} - (J-2)\frac{1-\delta}{2}+ 0^+}\|u(t)\|_{H^s}^{(k-1)(J+1)+1}
\end{equation}
 
\end{lem}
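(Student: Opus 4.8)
The plan is to exploit the fact that no gain of regularity is sought here, so that every layer of the iterated operators can be controlled using only the frequency-restricted bound \eqref{eq:frequencyrestricted}. First I would record the algebraic structure produced by the INFR: both $\mathcal{N}_0^{(J+1)}$ and $\mathcal{N}_1^{(J+1)}$ are finite sums of nested frequency integrals, each an iteration of elementary multilinear operators. Reading the algorithm backwards, the $j$-th integration by parts contributes one phase denominator $1/\jap{\Phi^{(j)}}$ together with a sharp cutoff to the nonresonant region $|\Phi^{(j)}|>\beta_j\max\{|\Phi^{(1)}|^{1-\delta},|\Phi^{(j-1)}|^{1-\delta}\}$. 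The term $\mathcal{N}_0^{(J+1)}$ carries $J$ such denominators, while $\mathcal{N}_1^{(J+1)}$ carries $J$ denominators plus one topmost near-resonant cutoff $|\Phi|<\beta_{J+1}\max\{\dots\}$ with no accompanying denominator, which also explains the extra $(k-1)$ factors of $\tilde u$ coming from the last substitution step. The goal is then to peel off these layers one at a time, applying \eqref{eq:frequencyrestricted} at each stage.

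The core mechanism is a dyadic decomposition of the phases. On each layer I would set $|\Phi^{(j)}|\sim 2^{m_j}$; restricted to such a shell, the elementary operator is a finite sum of the operators $T^{\alpha,M}$ with $\alpha\sim M\sim 2^{m_j}$, so \eqref{eq:frequencyrestricted} yields a factor $\jap{\alpha}^{0^+}\jap{M}^{1/2^+}\sim 2^{m_j(1/2+0^+)}$ per layer, while the denominator $1/\jap{\Phi^{(j)}}$ supplies a gain $2^{-m_j}$. The net contribution of a nonresonant layer is therefore $2^{-m_j(1/2-0^+)}$, and stacking the $J$ layers produces the product $\prod_{j}2^{-m_j(1/2-0^+)}$ acting on $\|u\|_{H^s}^{(k-1)J+1}$, the number of factors being fixed by the bookkeeping of the substitution step.

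It then remains to sum the geometric series in the $m_j$. The nonresonant constraints nest the dyadic parameters: the first cutoff forces $2^{m_1}\gtrsim N$, and each subsequent cutoff forces $2^{m_j}\gtrsim N^{1-\delta}$ through the $|\Phi^{(1)}|^{1-\delta}$ term in the maximum. Summing from the innermost layer outward, every sum is dominated by its lower endpoint, so the first layer contributes $N^{-1/2+0^+}$ and each of the remaining $J-1$ layers contributes $N^{-(1-\delta)/2+0^+}$; this is exactly the exponent $-\tfrac12-(J-1)\tfrac{1-\delta}{2}+0^+$ for $\mathcal{N}_0^{(J+1)}$. For $\mathcal{N}_1^{(J+1)}$ the topmost near-resonant layer carries the loss $\jap{M}^{1/2^+}$ with $M\sim\beta_{J+1}\max\{\dots\}$ but no compensating $1/\Phi$ gain. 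Its effective size is $N^{(1-\delta)/2+0^+}$, and the decisive point is that the exponent $1-\delta$ (rather than $1$) in the threshold leaves a $\delta$-margin: when $M$ is instead driven large by the preceding phase $2^{m_J(1-\delta)}$, the loss $2^{m_J(1-\delta)/2}$ is beaten by the gain $2^{-m_J/2}$ of the layer below, leaving the summable factor $2^{-\delta m_J/2}$. The upshot is that this top layer removes one decay factor, shifting $(J-1)$ to $(J-2)$ and matching \eqref{eq:estlema316}.

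The main obstacle is not any single estimate but the combinatorics of the term count. Each integration by parts lets the time derivative fall on a product of $\sim(k-1)(j+1)+1$ factors, and each resulting branch is further split into a near-resonant and a nonresonant piece, so the number of distinct terms at the $J$-th step grows factorially in $J$. The delicate part — and the reason for the precise choice $\beta_j=((k-1)(j+1)+1)^k$ — is to check that this growth is dominated: the $\beta_j$ sharpen the nonresonant thresholds just enough that the per-layer decay outruns the number of terms, so that both the dyadic sums above and the outer series $\sum_{J}$ in \eqref{eq:NFE} converge for $N$ large. I would treat the term-counting, together with the verification that each dyadic shell is captured by finitely many $T^{\alpha,M}$ with the stated $\alpha\sim M\sim 2^{m_j}$, as the technical heart of the argument, following the bookkeeping of \cite{koy}; the geometric summation is then routine.
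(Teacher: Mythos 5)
Your proposal is correct and follows essentially the same route as the paper's own argument, which (deferring details to \cite{koy}) likewise decomposes dyadically in the phase, pairs each $1/|\Phi^{(j)}|$ denominator with the $\jap{M}^{1/2^+}$ loss from \eqref{eq:frequencyrestricted} to get a net gain of $M^{-1/2+0^+}$ per nonresonant layer, exploits the $(1-\delta)$-thresholds to bound the inner layers by $M^{-(J-2)\frac{1-\delta}{2}+0^+}$, and sums the resulting geometric series over dyadic $M>N$. Your flattened multi-dyadic sum over all phases, the absorption of the top near-resonant loss into the adjacent layer's gain, and the $\beta_j$ term-counting are precisely the bookkeeping the paper cites from \cite{koy} rather than reproducing.
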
 

Let us explain briefly the proof of the last estimate (the first is easier). First of all, notice that any of these terms is a byproduct of $\mathcal{N}_2^{(1)}$, meaning that $|\Phi(\Xi)|>N$. Then one may write
\begin{align*}
|\mathcal{N}_1^{(J+1)}(\tilde{u}(t))| &\lesssim \int_{|\Phi|>N} \frac{1}{|\Phi|}v_1(\xi_1)\dots v_k(\xi_k) d\xi_1\dots d\xi_k \\&\lesssim \sum_{\substack{M>N\\M\ \mbox{\footnotesize dyadic}}} \int_{|\Phi|\sim M} \frac{1}{|\Phi|}v_1(\xi_1)\dots v_k(\xi_k) d\xi_1\dots d\xi_k.
\end{align*}
where $v_1,\dots, v_k$ depend on $\tilde{u}$. Applying \eqref{eq:frequencyrestricted},
\begin{align}
\left\|\mathcal{F}^{-1}\left(\mathcal{N}_1^{(J+1)}(\tilde{u}(t))\right)\right\|_{H^s}&\lesssim \sum_{\substack{M>N\\M\ \mbox{\footnotesize dyadic}}} M^{-1} \|T^{0,M}(v_1,\dots, v_k)\|_{H^s}\nonumber\\& \lesssim \sum_{\substack{M>N\\M\ \mbox{\footnotesize dyadic}}} M^{-1/2+0^+} \prod_{l=1}^k\|v_l\|_{H^s}.\label{eq:provalema316}
\end{align}
Due to the restrictions in the frequencies, using once again \eqref{eq:frequencyrestricted}, one may show (see \cite[Proof of Lemma 3.16]{koy}) that

\begin{equation}\label{eq:estvl}
\prod_{l=1}^k \|v_l\|_{H^s} \lesssim M^{-(J-2)\frac{1-\delta}{2}+0^+} \|u(t)\|_{H^s}^{(k-1)(J+1)+1}
\end{equation}

and so
\begin{align*}
\left\|\mathcal{F}^{-1}\left(\mathcal{N}_1^{(J+1)}(\tilde{u}(t))\right)\right\|_{H^s}\lesssim \sum_{\substack{M>N\\M\ \mbox{\footnotesize dyadic}}} M^{-1/2-(J-2)\frac{1-\delta}{2}+0^+} \|u(t)\|_{H^s}^{(k-1)(J+1)+1}.
\end{align*}
Since $0<\delta<1$, the sum is finite for any $J\ge 1$ and \eqref{eq:estlema316} follows.

Now we argue how \eqref{eq:basicaepsilon} can improve the above argument in order to obtain a gain of regularity. Instead of applying \eqref{eq:frequencyrestricted} in \eqref{eq:provalema316}, we use \eqref{eq:basicaepsilon}:
\begin{align*}
\left\|\mathcal{F}^{-1}\left(\mathcal{N}_1^{(J+1)}(\tilde{u}(t))\right)\right\|_{H^{s+\epsilon}}&\lesssim \sum_{\substack{M>N\\M\ \mbox{\footnotesize dyadic}}} M^{-(1-\sigma)} \|T_\sigma(v_1,\dots, v_k)\|_{H^{s+\epsilon}}\\& \lesssim \sum_{\substack{M>N\\M\ \mbox{\footnotesize dyadic}}} M^{-(1-\sigma)} \prod_{l=1}^k\|v_l\|_{H^s}.
\end{align*}
It follows from \eqref{eq:estvl} that
\begin{equation}\label{eq:melhorcomsigma}
\left\|\mathcal{F}^{-1}\left(\mathcal{N}_1^{(J+1)}(\tilde{u}(t))\right)\right\|_{H^{s+\epsilon}}\lesssim \sum_{\substack{M>N\\M\ \mbox{\footnotesize dyadic}}} M^{-(1-\sigma)-(J-2)\frac{1-\delta}{2}+0^+} \|u(t)\|_{H^s}^{(k-1)(J+1)+1}.
\end{equation}
If $J=1$ and $\delta$ is small, the above sum is divergent. However, choosing $\delta=\sigma$, the sum is once again finite for any $J\ge 1$. Therefore, we may state
\begin{lem}\label{lem:boundsgeral}
	Assume \eqref{eq:basicaepsilon} and \eqref{eq:frequencyrestricted}. Take $\delta=\sigma$ in the INFR. Then, for any $J\ge 1$,
	\begin{equation}
	\left\|\mathcal{F}^{-1}\left(\mathcal{N}_0^{(J+1)}(\tilde{u}(t))\right)\right\|_{H^{s+\epsilon}}\lesssim N^{-(1-\sigma) - (J-1)\frac{1-\sigma}{2} + 0^+}\|u(t)\|_{H^s}^{(k-1)J+1}
	\end{equation}
	and
	\begin{equation}\label{eq:estlema316novo}
	\left\|\mathcal{F}^{-1}\left(\mathcal{N}_1^{(J+1)}(\tilde{u}(t))\right)\right\|_{H^{s+\epsilon}}\lesssim N^{-(1-\sigma) - (J-2)\frac{1-\sigma}{2}+ 0^+}\|u(t)\|_{H^s}^{(k-1)(J+1)+1}
	\end{equation}
\end{lem}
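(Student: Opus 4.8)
The plan is to run exactly the scheme behind Lemma~\ref{lem:koy}, changing only the outermost estimate. Recall that the proof of \eqref{eq:estlema316} rests on two pieces: a single application of \eqref{eq:frequencyrestricted} at the outer layer, producing the factor $M^{-1/2+0^+}$ in \eqref{eq:provalema316}, followed by the recursive estimate \eqref{eq:estvl} for the interior product $\prod_l\|v_l\|_{H^s}$. I would keep the interior analysis verbatim --- in particular \eqref{eq:estvl}, which is \cite[Lemma 3.16]{koy} and encodes all the combinatorics of the nested frequency restrictions accumulated during the first $J$ steps of the INFR --- and only replace the outer use of \eqref{eq:frequencyrestricted} by the smoothing bound \eqref{eq:basicaepsilon}. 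This single swap is what both raises the target norm from $H^s$ to $H^{s+\epsilon}$ and alters the power of $M$.

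Concretely, for the near-resonant term $\mathcal{N}_1^{(J+1)}$ I would argue as in the computation leading to \eqref{eq:melhorcomsigma}. Since every such term is a descendant of $\mathcal{N}_2^{(1)}$, one has $|\Phi(\Xi)|>N$; decomposing dyadically into $|\Phi|\sim M$ with $M>N$ and writing, on each slice, $|\Phi|^{-1}\sim M^{-(1-\sigma)}\jap{\Phi}^{-\sigma}$ exposes precisely the kernel $\jap{\Phi}^{-\sigma}$ of $T_\sigma$. Applying \eqref{eq:basicaepsilon} to that factor yields the $H^{s+\epsilon}$ bound $M^{-(1-\sigma)}\prod_l\|v_l\|_{H^s}$ on the $M$-slice, and inserting \eqref{eq:estvl} produces exactly the summand in \eqref{eq:melhorcomsigma}, namely $M^{-(1-\sigma)-(J-2)\frac{1-\delta}{2}+0^+}\|u(t)\|_{H^s}^{(k-1)(J+1)+1}$. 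The $\epsilon$-gain is confined to this outermost layer, while all interior bookkeeping remains at the $H^s$ level through repeated use of \eqref{eq:frequencyrestricted}, so there is no clash between the two norms.

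The crux is the summation over dyadic $M>N$, which is where the choice $\delta=\sigma$ enters. For $J\ge 2$ the interior gain $\frac{1-\delta}{2}$ already makes the series geometric and convergent for any $\delta\in(0,1)$. The delicate case is $J=1$, where \eqref{eq:estvl} is in fact a \emph{loss} of $\frac{1-\delta}{2}$ and the total exponent reduces to $-(1-\sigma)+\frac{1-\delta}{2}$; because the outer factor $M^{-(1-\sigma)}$ is weaker than the $M^{-1/2+0^+}$ available in \cite{koy} (it degenerates as $\sigma\to 1$), one can no longer take $\delta$ arbitrarily small. Choosing $\delta=\sigma$ turns this exponent into $-\tfrac{1-\sigma}{2}<0$, restoring convergence, and for general $J$ it collapses the exponent to $-(1-\sigma)-(J-2)\frac{1-\sigma}{2}$. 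Summing the geometric series --- dominated by its leading term $M\sim N$ --- then gives \eqref{eq:estlema316novo}. The boundary term $\mathcal{N}_0^{(J+1)}$ is handled by the identical substitution but is the easier of the two: it is one layer shallower in the INFR tree, reflected in the reduced power $(k-1)J+1$ on $\|u(t)\|_{H^s}$, and carries one extra factor of gain $\frac{1-\sigma}{2}$, yielding the exponent $-(1-\sigma)-(J-1)\frac{1-\sigma}{2}+0^+$.

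The only genuinely new point, and the one I would check most carefully, is precisely the compatibility of the single parameter $\delta$ with two competing demands: the interior gain $\frac{1-\delta}{2}$ must stay positive (forcing $\delta<1$, needed for \eqref{eq:estvl} and for summability at every $J\ge 2$), while the $J=1$ exponent $-(1-\sigma)+\frac{1-\delta}{2}$ must stay negative (forcing $\frac{1-\delta}{2}<1-\sigma$). Everything else is an unchanged transcription of \cite[Lemmas 3.15 and 3.16]{koy}; the balance $\delta=\sigma$ is the clean choice meeting both constraints and producing the stated exponents.
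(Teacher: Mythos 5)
Your proposal is correct and is essentially the paper's own proof: the paper likewise keeps the interior bookkeeping \eqref{eq:estvl} untouched, swaps the single outer application of \eqref{eq:frequencyrestricted} in \eqref{eq:provalema316} for \eqref{eq:basicaepsilon} (via $|\Phi|^{-1}\sim M^{-(1-\sigma)}\jap{\Phi}^{-\sigma}$ on each dyadic slice $|\Phi|\sim M>N$), arrives at \eqref{eq:melhorcomsigma}, and picks $\delta=\sigma$ exactly to rescue convergence of the dyadic sum in the $J=1$ case. Your identification of the two competing constraints on $\delta$, and the easier treatment of the boundary terms $\mathcal{N}_0^{(J+1)}$, also match the paper's reasoning.
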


\begin{nb}\label{gammabeta} Under the more general bound \eqref{eq:frequencyrestrictedgen}, estimate \eqref{eq:estvl} becomes
	\begin{align*}
	\prod_{l=1}^k\|v_l\|_{H^s}\lesssim M^{\gamma-(J-2)(1-\delta)(1-\eta)}\|u(t)\|_{H^s}^{(k-1)(J+1)+1},\quad \eta=\gamma+\beta.
	\end{align*}
	and, analogously to \eqref{eq:melhorcomsigma},
	\begin{equation}
	\left\|\mathcal{F}^{-1}\left(\mathcal{N}_1^{(J+1)}(\tilde{u}(t))\right)\right\|_{H^{s+\epsilon}}\lesssim \sum_{\substack{M>N\\M\ \mbox{\footnotesize dyadic}}} M^{-(1-\sigma-\gamma)-(J-2)(1-\delta)(1-\eta) } \|u(t)\|_{H^s}^{(k-1)(J+1)+1}
	\end{equation}
	Recalling that $\sigma+\gamma,\eta <1$, if $\delta$ is sufficiently close to 1, the sum is convergent. These estimates imply analogous bounds to those of Lemma \ref{lem:boundsgeral}.
\end{nb}

\subsection{Validity of the \eqref{eq:NFE} for smooth data and proof of Theorem 1}
In order for the \eqref{eq:NFE} to have any usefulness, one must show that any solution of \eqref{eq:geral} must also solve \eqref{eq:NFE}. This justification requires that
\begin{itemize}
	\item one may apply the INFR algorithm for rough solutions;
	\item the remainder at step $J$ tends to $0$ in some weaker norm, as $J\to\infty$.
\end{itemize}
At low regularity, the proof of the above properties can be quite involved and it is actually the hardest part in the normal form method. Luckily, in order to prove nonlinear smoothing, we only need to show the validity of the \eqref{eq:NFE} for some large level of regularity, where these properties become trivial.

Take $u_0\in H^m$, $m\gg s,d/2$. Then $u\in C([0,T], H^m)$ and it follows directly from the equation that $u_t \in C([0,T], H^s)$ and
$$
\|u_t(t)\|_{H^s}\lesssim \|u(t)\|_{H^m}^k, \quad t\in [0,T].
$$

Therefore one may apply the INFR algorithm (which includes integration by parts in time and an exchange between time derivatives and frequency integration). It remains to see that the remainder $\mathcal{N}^{(J+1)}(\tilde{u}(t))$ goes to zero in some weaker norm. However, observe that, \textit{before replacing }$\tilde{u}_t$\textit{ using }\eqref{eq:tildeu1}, the remainder is essentially the boundary term $\mathcal{N}_0^{(J+1)}$, with $\tilde{u}_t$ instead of one $\tilde{u}$. Therefore, applying the same arguments as in Lemma \ref{lem:koy},
\begin{lem}
	Assume \eqref{eq:frequencyrestricted}. Suppose that $u_0\in H^m$, $m\gg s, d/2$. Then the remainder $\mathcal{N}^{(J+1)}(\tilde{u}(t))$ satisfies
	$$
	 \left\|\mathcal{F}^{-1}\left(\mathcal{N}^{(J+1)}(\tilde{u}(t))\right)\right\|_{H^s}\lesssim N^{-\frac{1}{2} - (J-1)\frac{1-\delta}{2} + 0^+}\|u(t)\|_{H^m}^{(k-1)(J+1)+1}
	$$
	Consequently, $\tilde{u}$ satisfies the \eqref{eq:NFE} in the distributional sense.
\end{lem}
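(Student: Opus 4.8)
The plan is to derive the bound by recycling the boundary-term estimate of Lemma~\ref{lem:koy}, the point being that at the high regularity level $H^m$ all the formal operations of the INFR are justified and the remainder is structurally a boundary term carrying a single extra time derivative.

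First I would fix the regularity bookkeeping. By Assumption~1, since $m\gg s,d/2$, the solution satisfies $u\in C([0,T],H^m)$. Writing the profile as $v(t)=G(-t)u(t)$, a direct computation using the equation gives $\partial_t v=G(-t)N(u)$; as $G(-t)$ is an isometry on every Sobolev space, $\|\partial_t v(t)\|_{H^s}=\|N(u)(t)\|_{H^s}$. Since $N$ is a monomial of degree $k$ carrying finitely many derivatives and $H^m$ is a Banach algebra for $m>d/2$, this yields $\|\partial_t v(t)\|_{H^s}\lesssim\|u(t)\|_{H^m}^k$, which is the bound on $u_t$ recorded above. In particular $\tilde u_t$ is well defined, with $\mathcal{F}^{-1}(\tilde u_t)=\partial_t v\in C([0,T],H^s)$, every oscillatory integral arising in the algorithm converges absolutely (the factors $1/\Phi$ with $|\Phi|>N$ supplying the decay), and the differentiations under the integral sign together with the time integrations by parts are all legitimate.

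Next I would identify the structure of the remainder. The term $\mathcal{N}^{(J+1)}$ is produced at Step~2 of the algorithm, when the nonresonant part is integrated by parts via $e^{is\Phi}=\partial_s\big(\tfrac{1}{i\Phi}e^{is\Phi}\big)$: the boundary contribution is $\mathcal{N}_0^{(J+1)}$, and the surviving integrand is what results from letting $\partial_s$ fall on the product of the $\tilde u$ factors. By the Leibniz rule, and before $\tilde u_t$ is replaced through \eqref{eq:tildeu1}, this remainder is exactly a sum of terms each sharing the full multilinear structure of $\mathcal{N}_0^{(J+1)}$ — the same multiplier $m(\Xi)/\Phi$, the same nested phase restrictions inherited from $\mathcal{N}_2^{(1)}$ (so that $|\Phi|>N$), and the same number $(k-1)J+1$ of Fourier factors — except that precisely one factor $\tilde u$ is replaced by $\tilde u_t$. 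This observation is the crux: it lets one transcribe the estimate for $\mathcal{N}_0^{(J+1)}$ almost verbatim. Running the same dyadic decomposition in $|\Phi|\sim M>N$ and the same repeated applications of \eqref{eq:frequencyrestricted} as in the proof of Lemma~\ref{lem:koy}, but holding the distinguished factor $\tilde u_t$ aside, I obtain
\[
\big\|\mathcal{F}^{-1}\big(\mathcal{N}^{(J+1)}(\tilde u(t))\big)\big\|_{H^s}\lesssim N^{-\frac12-(J-1)\frac{1-\delta}{2}+0^+}\,\|\partial_t v(t)\|_{H^s}\,\|u(t)\|_{H^s}^{(k-1)J}.
\]
Inserting $\|\partial_t v\|_{H^s}\lesssim\|u\|_{H^m}^k$, using $\|u\|_{H^s}\le\|u\|_{H^m}$, and recalling $(k-1)J+k=(k-1)(J+1)+1$ produces exactly the asserted bound: the $N$-power coincides with that of $\mathcal{N}_0^{(J+1)}$ because the integration by parts was identical, while the polynomial degree rises by $k-1$ precisely because $\tilde u_t$ counts as $k$ factors rather than one.

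Finally I would pass to the limit. For fixed $N>1$ and $0<\delta<1$, the exponent $-\tfrac12-(J-1)\tfrac{1-\delta}{2}$ tends to $-\infty$, so $\mathcal{F}^{-1}(\mathcal{N}^{(J+1)})$ vanishes in $H^s$ as $J\to\infty$, uniformly on $[0,T]$ since $\|u\|_{H^m}$ is bounded there. At every finite $J$ the algorithm produces an exact identity — the truncation of \eqref{eq:NFE} plus the remainder $\int_0^t\mathcal{N}^{(J+1)}$ — so letting $J\to\infty$, with the boundary and near-resonant series converging by Lemmas~\ref{lem:koy} and~\ref{lem:boundsgeral} and the remainder dropping out, shows that $\tilde u$ satisfies \eqref{eq:NFE} as an identity in $H^s$, hence in the distributional sense. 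I expect the genuine obstacle to be not any single estimate but the rigorous justification of the formal manipulations — differentiation under the integral, Fubini for the nested multilinear oscillatory integrals, and the termwise validity of the integrations by parts — and it is exactly here that $m\gg s,d/2$ does the work, replacing a delicate low-regularity argument by absolutely convergent integrals controlled through the algebra property of $H^m$.
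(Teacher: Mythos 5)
Your proposal follows the paper's own proof essentially verbatim: the same high-regularity bound on the profile derivative obtained directly from the equation (with the algebra property of $H^m$), the same key structural observation that, \emph{before} substituting \eqref{eq:tildeu1}, the remainder $\mathcal{N}^{(J+1)}$ is just the boundary term $\mathcal{N}_0^{(J+1)}$ with one factor $\tilde u$ replaced by $\tilde u_t$, and the same recycling of the Lemma \ref{lem:koy} machinery with the distinguished factor set aside, yielding the correct power count $(k-1)J+k=(k-1)(J+1)+1$.

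One caveat on your final limiting step: since the power of $\|u\|_{H^m}$ grows linearly in $J$, the stated bound behaves like $\bigl(N^{-(1-\delta)/2}\|u(t)\|_{H^m}^{k-1}\bigr)^J$ up to bounded factors, so it does \emph{not} tend to zero for an arbitrary fixed $N>1$; boundedness of $\|u\|_{H^m}$ on $[0,T]$ is not enough when that bound exceeds $1$. The clean fix is already in your own write-up: keep your sharper intermediate estimate $N^{-\frac12-(J-1)\frac{1-\delta}{2}+0^+}\|\partial_t v\|_{H^s}\|u\|_{H^s}^{(k-1)J}$, in which only a single fixed power of the $H^m$ norm appears and the $J$-growing factor involves the $H^s$ norm; then the remainder vanishes as soon as $N$ dominates a power of $\sup_t\|u(t)\|_{H^s}$, which is exactly compatible with the choice of $N$ made in the proof of Theorem \ref{thm:nonlinearsmooth}. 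The paper is equally terse on this point (``Consequently\ldots''), so this is a shared and easily repaired imprecision rather than a defect of your approach.
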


\begin{proof}[Proof of Theorem \ref{thm:nonlinearsmooth}]
	Take $u_0\in H^m(\real^d)$, $m\gg s, d/2$. Then $\tilde{u}$ satisfies \eqref{eq:NFE} and, due to the local well-posedness theory on $H^s(\real^d)$, the solution is defined on $[0,T]$, $T=T(\|u_0\|_{H^s})$ and satisfies
	$$
	\|u(t)\|_{H^s}\le 2\|u_0\|_{H^s}=:M.
	$$
	Take
	$$
	N=(2M^{k-1})^{\frac{2}{1-\sigma}}.
	$$
	Applying \eqref{eq:primeirotermo} and the estimates of Lemma \ref{lem:boundsgeral}, for any $0<t<T$,
	\begin{align*}
	\|\mathcal{F}^{-1}\left(\tilde{u}(t)-\tilde{u}(0)\right)\|_{H^{s+\epsilon}} &\lesssim \sum_{j\ge 2} \left\|\mathcal{F}^{-1}\left(\mathcal{N}_0^{(j)}(\tilde{u}(t)) \right)\right\|_{H^{s+\epsilon}} + \sum_{j\ge 2} \left\|\mathcal{F}^{-1}\left(\mathcal{N}_0^{(j)}(\tilde{u}(0)) \right)\right\|_{H^{s+\epsilon}}\\&\qquad + \sum_{j\ge 1} \int_0^t \left\|\mathcal{F}^{-1}\left(\mathcal{N}_1^{(j)}(\tilde{u}(s)) \right)\right\|_{H^{s+\epsilon}}ds\\&\lesssim \sum_{j\ge 2} N^{-(1-\sigma) - (j-2)\frac{1-\sigma}{2} + 0^+}M^{(k-1)j+1} + TN^{1-\sigma}\\&\qquad +T\sum_{j\ge 2}N^{-(1-\sigma) - (j-3)\frac{1-\sigma}{2}+ 0^+}M^{(k-1)(j+1)+1}\\&\lesssim M^{1+0^+} + TM^{2k-2}+ TM^{2k-1+0^+} =C(\|u_0\|_{H^s}), \quad \forall u_0\in H^m(\real^d).
	\end{align*}
	If $v_0\in H^s(\real^d)$, one approximates by $(v_0^n)_{n\in\mathbb{N}}$, $v_0^n\in H^m(\real^d)$, and, by continuous dependence,
\begin{align*}
	\|v(t)-G(t)v_0\|_{H^{s+\epsilon}}&=\|\mathcal{F}^{-1}\left(\tilde{v}(t)-\tilde{v}(0)\right)\|_{H^{s+\epsilon}}\le \liminf \|\mathcal{F}^{-1}\left(\tilde{v}^n(t)-\tilde{v}^n(0)\right)\|_{H^{s+\epsilon}}\\& \le \liminf C(\|v_0^n\|_{H^s}) = C(\|v_0\|_{H^s}).
\end{align*}
	The proof is finished.
\end{proof}

%
%

\section{Application of Theorem 1 to some classical examples}

Through Theorem 1, the proof of nonlinear smoothing is reduced to the study of the operators $T^{\alpha, M}$ and $T_\sigma$. In order to show the necessary bounds, we argue by duality: indeed, \eqref{eq:basicaepsilon} is equivalent to
\begin{equation}
I_{\sigma,\epsilon}:=\left|\int_{\x_1+\dots + \x_k=\x} \frac{\jap{\xi}^{s+\epsilon}}{\jap{\xi_1}^{s}\dots \jap{\xi_k}^{s}\jap{\Phi(\Xi)}^\sigma}m(\Xi)v_1(\x_1)\dots v_k(\x_k)v(\xi)\label{eq:integralse} d\x_1\dots d\x_{k-1}d\xi\right|\lesssim \prod_{j=1}^{k+1}\|v_k\|_{L^2},
\end{equation}
with $v_{k+1}:=v$, while \eqref{eq:frequencyrestricted} follows from
\begin{align}
I^{\alpha,M}&:=\left|\int_{\substack{\x_1+\dots + \x_k=\x\\|\Phi(\Xi)-\alpha|<M}}m(\Xi)\frac{\jap{\xi}^{s+\epsilon}}{\jap{\xi_1}^{s}\dots \jap{\xi_k}^{s}}v_1(\x_1)\dots v_k(\x_k)v(\xi) d\x_1\dots d\x_{k-1}d\xi\right| \nonumber\\&\lesssim \jap{\alpha}^{0^+}\jap{M}^{1/2^+}\prod_{j=1}^{k+1}\|v_k\|_{L^2}.\label{eq:integralam}
\end{align}
Moreover, when symmetry allows, we consider $|\xi_1|\ge \dots \ge|\xi_k|$.

\vskip10pt

The method we used for proving \eqref{eq:frequencyrestricted}, in the examples presented, follows the principle that, if one could integrate in $\Phi$ (through a suitable change of variables), then the estimate would follow from a Cauchy-Schwarz argument applied to the characteristic function of the set $\{|\Phi(\Xi)-\alpha|<M\}$. The only case where such a procedure is impossible is when the frequencies are close to a stationary point of $\Phi$. Assuming that these points form a finite set, one may essentially replace $\Phi$ by its value at each stationary point.
\vskip10pt
For \eqref{eq:basicaepsilon}, the main philosophy is that, when at least two frequencies are comparable to $\xi$, one may automatically compensate for the extra $\epsilon$ and the phase does not need to intervene (at least for large $s$). If not, then the phase can be simplified in order to involve only $\xi$ and $\xi_1$ and it can used to compensate the $\epsilon$-factor.
\vskip10pt
One useful estimate for integrals of type \eqref{eq:integralam} or \eqref{eq:integralse} can be achieved using the Cauchy-Schwarz inequality: if $\mathcal{M}$ is a symbol on $\xi, \xi_1, \dots, \xi_k$,
\begin{align}
&\int_{\substack{\x_1+\dots + \x_k=\x}} \mathcal{M}(\Xi)v_1(\xi_1)\dots v_k(\xi_k)v(\xi)d\x_1\dots d\xi_{k-1}d\xi\nonumber \\&\le \int \left(\int_{\substack{\x_1+\dots + \x_k=\x}} (\mathcal{M}(\Xi))^2 d\x_1\dots d\xi_{k-1}\right)^{\frac{1}{2}}\left(\int_{\substack{\x_1+\dots + \x_k=\x}} v_1^2(\xi_1)\dots v_k^2(\xi_k)d\x_1\dots d\xi_{k-1}\right)^{\frac{1}{2}}v(\xi)d\xi\nonumber  \\&\le \sup_{\xi } \left(\int_{\substack{\x_1+\dots + \x_k=\x}} (\mathcal{M}(\Xi))^2 d\x_1\dots d\xi_{k-1}\right)^{\frac{1}{2}} \prod_{j=1}^{k+1}\|v_j\|_{L^2}.\label{eq:sup}
\end{align}
Moreover, by symmetry, one can derive the same inequality but with $\xi$ and $\xi_j$ interchanged, for any $j=1,\dots, k-1$.

\subsection{Nonlinear smoothing for the (mKdV)}
For the (mKdV), we have
$$
\Phi(\Xi)=3(\xi-\xi_1)(\xi-\xi_2)(\xi-\xi_3),\quad m(\Xi)=\xi.
$$
Without loss of generality, we assume that $|\xi_1|\ge |\xi_2|\ge |\xi_3|$. Hence $|\x_1|\gtrsim |\xi|$.

\begin{lem}
	For any $s>1/4$, the estimate \eqref{eq:frequencyrestricted} holds for the \eqref{mkdv}.
\end{lem}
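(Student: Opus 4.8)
The plan is to establish the bound \eqref{eq:frequencyrestricted} via its dual formulation \eqref{eq:integralam}, using the Cauchy-Schwarz estimate \eqref{eq:sup}. By the reduction stated in the paper, it suffices to control
$$
I^{\alpha,M}=\left|\int_{\substack{\xi_1+\xi_2+\xi_3=\xi\\|\Phi(\Xi)-\alpha|<M}} \frac{\xi\,\jap{\xi}^{s}}{\jap{\xi_1}^{s}\jap{\xi_2}^{s}\jap{\xi_3}^{s}} v_1(\xi_1)v_2(\xi_2)v_3(\xi_3)v(\xi)\,d\xi_1\,d\xi_2\,d\xi\right|,
$$
where I have set $\epsilon=0$ since this bound does not require any gain. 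With $|\xi_1|\ge|\xi_2|\ge|\xi_3|$ and $\Phi=3(\xi-\xi_1)(\xi-\xi_2)(\xi-\xi_3)$, the multiplier here is $\mathcal{M}(\Xi)=\xi\jap{\xi}^{s}\jap{\xi_1}^{-s}\jap{\xi_2}^{-s}\jap{\xi_3}^{-s}\mathbbm{1}_{\{|\Phi-\alpha|<M\}}$. Applying \eqref{eq:sup}, the task reduces to bounding
$$
\sup_{\xi}\int_{\substack{\xi_1+\xi_2+\xi_3=\xi\\|\Phi(\Xi)-\alpha|<M}} \frac{\xi^{2}\jap{\xi}^{2s}}{\jap{\xi_1}^{2s}\jap{\xi_2}^{2s}\jap{\xi_3}^{2s}}\,d\xi_1\,d\xi_2 \;\lesssim\; \jap{\alpha}^{0^+}\jap{M}^{1+}.
$$

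First I would fix $\xi$ and use the constraint $\xi_3=\xi-\xi_1-\xi_2$ to reduce to a two-dimensional integral in $(\xi_1,\xi_2)$. The guiding principle, as stated in the paper, is to change variables so that one of the integration variables becomes $\Phi$ itself; the restriction $|\Phi-\alpha|<M$ then contributes a factor of size $M$ (hence $M^{1/2}$ after the square root), while the remaining integration must produce the factor $\jap{\alpha}^{0^+}$ together with convergence guaranteed by the Sobolev weights. Concretely, I would integrate in one variable, say $\xi_2$ (with $\xi,\xi_1$ held fixed), using $d\Phi = \partial_{\xi_2}\Phi\, d\xi_2$ and $\partial_{\xi_2}\Phi = 3(\xi-\xi_1)(\xi_1-\xi_3) + \cdots$, so that the $\Phi$-slab of width $M$ yields a factor $M/|\partial_{\xi_2}\Phi|$. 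The weight $\jap{\xi_1}^{-2s}$ with $s>1/4$ should then make the outer $\xi_1$-integral converge after accounting for the Jacobian and the factors $\xi^2\jap{\xi}^{2s}$; since $|\xi_1|\gtrsim|\xi|$, the large weight $\jap{\xi}^{2s}$ in the numerator is dominated by $\jap{\xi_1}^{2s}$ in the denominator, which is the mechanism that caps the growth in $\xi$.

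The main obstacle will be handling the regions where $\partial_{\xi_2}\Phi$ degenerates, i.e.\ near the stationary points of $\Phi$, where the change of variables $\xi_2\mapsto\Phi$ is singular and the naive Jacobian bound blows up. For the cubic $\Phi$ here, $\partial_{\xi_2}\Phi$ vanishes when two of the differences $(\xi-\xi_i)$ coincide, and the paper's stated strategy is precisely to isolate this finite set of stationary points and replace $\Phi$ by its critical value there. In such a neighborhood one cannot integrate in $\Phi$, so I would instead bound the measure of the resonant set $\{|\Phi-\alpha|<M\}$ directly: near a nondegenerate critical point $\Phi$ behaves quadratically, so the slab has $\xi_2$-measure $\lesssim M^{1/2}$, which combined with the square root in \eqref{eq:sup} is consistent with the claimed $\jap{M}^{1/2^+}$, while the $\jap{\alpha}^{0^+}$ absorbs the logarithmic loss from summing over the dyadic scales of $|\partial_{\xi_2}\Phi|$ away from the critical set. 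Balancing the nondegenerate region (where the Jacobian is large) against the degenerate region (where the measure estimate takes over) is the delicate step, and I expect the threshold $s>1/4$ to arise exactly from requiring convergence of the $\xi_1$-integral in the worst case, when $|\xi_2|$ and $|\xi_3|$ are both small.
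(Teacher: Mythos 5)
Your overall skeleton---duality, the Cauchy--Schwarz reduction \eqref{eq:sup}, the change of variables $\xi_i\mapsto\Phi$ away from the stationary points, and a separate treatment near them---is the same as the paper's, and your nondegenerate case is essentially the paper's Case 1 (there $|\partial\Phi/\partial\xi_1|\gtrsim|\xi|^2$ or $|\partial\Phi/\partial\xi_2|\gtrsim|\xi|^2$, the Jacobian cancels the factor $\xi^2\jap{\xi}^{2s}\jap{\xi_1}^{-2s}$, and $s>1/4$ is what makes $\int\jap{\xi_3}^{-4s}\,d\xi_3$ converge). The genuine gap is in your treatment of the stationary regions, which is exactly the delicate part. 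Three of the four stationary points, namely $(\xi_1,\xi_2)=(\xi,\xi),\ (\xi,-\xi),\ (-\xi,\xi)$, are saddle-type: near $(\xi,\xi)$, writing $a=\xi-\xi_1$, $b=\xi-\xi_2$, one has $\Phi=3ab(2\xi-a-b)\approx 6\xi ab$, so for fixed $\xi_1$ the map $\xi_2\mapsto\Phi$ is essentially \emph{linear}, with slope $6|\xi||\xi-\xi_1|$ that degenerates as $\xi_1\to\xi$. The $\xi_2$-measure of the slab is therefore $\min\{c|\xi|,\,M/(6|\xi||\xi-\xi_1|)\}$, which is not $\lesssim M^{1/2}$; the quadratic-behavior claim you rely on simply does not apply there. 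At the one elliptic point $(\xi/3,\xi/3)$ the quadratic coefficient has size $|\xi|$, so the slice bound is $(M/|\xi|)^{1/2}$, and multiplying it by a $\xi_1$-range that can be as large as $c|\xi|$ gives a two-dimensional measure bound of order $M^{1/2}|\xi|^{1/2}$; against the weight $\jap{\xi}^{2(1-2s)}$ (note that near \emph{every} stationary point all three frequencies are comparable to $|\xi|$, so the ``worst case $|\xi_2|,|\xi_3|$ small'' you single out never occurs in this region) the resulting bound grows like $|\xi|^{5/2-4s}M^{1/2}$, which is unbounded in $\xi$ for all $s<5/8$, far short of the claimed $s>1/4$.

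The bookkeeping of the losses is also misplaced. Integrating the correct saddle slice bound in $\xi_1$ produces $(M/|\xi|)\log(|\xi|^3/M)$---a logarithm in $\xi$, not in $\alpha$---so $\jap{\alpha}^{0^+}$ cannot absorb it (indeed, for \eqref{mkdv} the paper's final bound carries no $\alpha$-factor at all; the factor $\jap{\alpha}^{0^+}$ is only needed for \eqref{nls}). What kills this logarithm is the decay of the weight: $\jap{\xi}^{2(1-2s)}\cdot (M/|\xi|)\log(|\xi|^3/M)=\jap{\xi}^{1-4s}M\log(|\xi|^3/M)$ is bounded exactly because $s>1/4$, and this is where the threshold enters in the resonant region. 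The paper implements this without logarithms through two devices you would need to add: at the elliptic point it changes variables in $\xi$ itself (using $\partial_\xi\Phi\sim\xi^2$ and the symmetry of \eqref{eq:sup}, i.e.\ taking the sup in $\xi_2$ instead of $\xi$), and at the saddles it rescales $p_j=\xi_j/\xi$ and applies H\"older with exponent $1+\delta$ to the $p_1$-integral, producing $\xi^{2(1-2s)+2-3/(1+\delta)}M^{1/(1+\delta)}$, which is bounded for small $\delta$ precisely when $2(1-2s)<1$, i.e.\ $s>1/4$. Without one of these (or the two-dimensional measure computation with the $\xi^{1-4s}$ absorption just described), your plan does not close on the range $1/4<s<5/8$.
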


\begin{proof}
	In this context, 
	$$
	\frac{\partial \Phi}{\partial \xi_1}=3(\xi-\xi_2)(\xi_3-\xi_1), \frac{\partial \Phi}{\partial \xi_2}=3(\xi-\xi_1)(\xi_3-\xi_2).
	$$
	There are four stationary points:
	$$
	(\xi_1,\xi_2)=(\xi,\xi),\ (\xi,-\xi),\ (-\xi,\xi),\ \left(\frac{\xi}{3},\frac{\xi}{3}\right).
	$$
	\textbf{Case 1.} For any stationary point $P$, $|(\xi_1,\xi_2)-P|\gtrsim |\xi|$. Then either
	$$
	\left|\frac{\partial \Phi}{\partial \xi_1}\right|\gtrsim |\xi|^2\quad \mbox{or}\quad \left|\frac{\partial \Phi}{\partial \xi_2}\right|\gtrsim |\xi|^2.
	$$
	Assuming the first possibility, using \eqref{eq:sup}, one has
	\begin{align*}
	I^{\alpha,M} = &\left|\int_{\substack{\x_1+\dots + \x_k=\x}} \mathbbm{1}_{|\Phi-\alpha|<M} \frac{\jap{\xi}^{s+1}}{\jap{\xi_1}^s\jap{\xi_2}^s\jap{\xi_3}^s}v_1v_2v_3vd\x_1d\x_3d\x \right| \\ \lesssim  &	\sup_\xi \left(\int\mathbbm{1}_{|\Phi-\alpha|<M} \frac{\jap{\xi}^2}{\jap{\xi_3}^{4s}} d\x_1 d\x_3 \right)^{1/2}\prod_{j=1}^4\|v_j\|_2 \\ \lesssim & \sup_\xi \left(\int\mathbbm{1}_{|\Phi-\alpha|<M} \frac{\jap{\xi}^2}{\jap{\xi_3}^{4s}} \left|\frac{\partial \Phi}{\partial \xi_1}\right|^{-1}d\Phi d\x_3 \right)^{1/2}\prod_{j=1}^4\|v_j\|_2 \\ \lesssim & \left(\int\mathbbm{1}_{|\Phi-\alpha|<M} \frac{1}{\jap{\xi_3}^{4s}} d\Phi d\x_3 \right)^{1/2}\prod_{j=1}^4\|v_j\|_2 \lesssim M^{1/2}\prod_{j=1}^4\|v_j\|_2.
	\end{align*}

	\noindent\textbf{Case 2.} $|(\xi_1,\xi_2)-P|\ll |\xi|$, for some stationary point $P$.
	
	\noindent\textit{Case a)}. $P=(\xi/3,\xi/3)$. Then
	$$
	\Phi=3\xi^3Q,\quad Q\sim (2/3)^3,\quad  \frac{\jap{\xi}^{s+1}}{\jap{\xi_1}^s\jap{\xi_2}^s\jap{\xi_3}^s} \lesssim \jap{\xi}^{1/2}.
	$$
	Therefore
	\begin{align*}
	I^{\alpha,M}  \lesssim &\sup_{\xi_2}\left(\int \xi\mathbbm{1}_{|\Phi-\alpha|<M} \ d\x_1 d\x \right)^{1/2}\prod_{j=1}^4\|v_j\|_2 \\ \lesssim & \sup_{\xi_2}\left(\int \xi^2\mathbbm{1}_{|8\xi^3/9-\alpha|<M}  d\x \right)^{1/2}\prod_{j=1}^4\|v_j\|_2 \lesssim M^{1/2}\prod_{j=1}^4\|v_j\|_2.
	\end{align*}
	
	\noindent\textit{Case b)}. $P\neq (\xi/3,\xi/3)$. W.l.o.g., $P=(\xi,\xi)$. In this case,
	$$
	\Phi=\xi^3Q,\quad Q\sim (p_1-1)(p_2-1),\quad p_j=\frac{\xi_j}{\xi},\ j=1,2,
	$$
	and
	$$
	\frac{\jap{\xi}^{s+1}}{\jap{\xi_1}^s\jap{\xi_2}^s\jap{\xi_3}^s} \lesssim \jap{\xi}^{1/2^-}.
	$$
	Thus, for $\delta>0$ sufficiently small,
	\begin{align*}
	I^{\alpha,M} = &\left|\int_{\substack{\x_1+\dots + \x_k=\x}} \mathbbm{1}_{|\Phi-\alpha|<M} \frac{\jap{\xi}^{s+1}}{\jap{\xi_1}^s\jap{\xi_2}^s\jap{\xi_3}^s}v_1v_2v_3vd\x_1d\x_3d\x \right|  \\ \lesssim &\sup_{\xi }\left(\int \xi^{1^-}\mathbbm{1}_{|\Phi-\alpha|<M} \ d\x_1 d\x_2 \right)^{1/2}\prod_{j=1}^4\|v_j\|_2 \\ \lesssim & \sup_{\xi}\left(\int_{|p_1-1|,|p_2-1|<1} \xi^{3^-}\mathbbm{1}_{|\Phi-\alpha|<M}  dp_1dp_2 \right)^{1/2}\prod_{j=1}^4\|v_j\|_2 \\ \lesssim& \sup_{\xi}\left(\int_{|p_2-1|<1} \xi^{3^-} \left( \int \mathbbm{1}_{|\xi^3(p_1-1)(p_2-1)-\alpha|<M}  dp_1\right)^{\frac{1}{1+\delta}}dp_2 \right)^{1/2}\prod_{j=1}^4\|v_j\|_2  \\ \lesssim& \sup_{\xi}\left(\int_{|p_2-1|<1} \xi^{3^-} \frac{M^{\frac{1}{1+\delta}}}{\xi^{\frac{3}{1+\delta}}|p_2-1|^{\frac{1}{1+\delta}}}dp_2 \right)^{1/2}\prod_{j=1}^4\|v_j\|_2 \lesssim \jap{M}^{1/2}\prod_{j=1}^4\|v_j\|_2.
	\end{align*}
\end{proof}

\begin{lem}
	In the conditions of Corollary \ref{teo:mkdv}, estimate \eqref{eq:basicaepsilon} holds.
\end{lem}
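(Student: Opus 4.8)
The plan is to argue by duality, reducing \eqref{eq:basicaepsilon} to the multilinear integral estimate \eqref{eq:integralse}, which for the \eqref{mkdv} (where $k=3$, $m(\Xi)=\xi$ and $\Phi(\Xi)=3(\xi-\xi_1)(\xi-\xi_2)(\xi-\xi_3)$) reads
$$
I_{\sigma,\epsilon}=\left|\int_{\xi_1+\xi_2+\xi_3=\xi}\frac{\jap{\xi}^{s+\epsilon}\,\xi}{\jap{\xi_1}^s\jap{\xi_2}^s\jap{\xi_3}^s\jap{\Phi}^\sigma}v_1(\xi_1)v_2(\xi_2)v_3(\xi_3)v(\xi)\,d\xi_1 d\xi_3 d\xi\right|\lesssim\prod_{j=1}^{4}\|v_j\|_{L^2},
$$
keeping the ordering $|\xi_1|\ge|\xi_2|\ge|\xi_3|$, so that $|\xi_1|\gtrsim|\xi|$ throughout. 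Following the philosophy described above, I would split the frequency region according to how many inputs are comparable to the output: the regime $|\xi_2|\gtrsim|\xi|$ (at least two high frequencies) and the regime $|\xi_2|\ll|\xi|$ (a single high frequency $\xi_1\sim\xi$, with $\xi_2,\xi_3$ genuinely lower). Each regime is further subdivided into a nonresonant part, where $|\Phi|$ is large enough that $\jap{\Phi}^\sigma$ absorbs the surplus $\jap{\xi}^{\epsilon}$, and a resonant part, where the phase is of lower order.

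In the two-high-frequency regime I would use $\jap{\xi_1}^s\jap{\xi_2}^s\gtrsim\jap{\xi}^{2s}$ to reduce the multiplier to $\jap{\xi}^{\epsilon+1-s}\jap{\xi_3}^{-s}\jap{\Phi}^{-\sigma}$ and then apply \eqref{eq:sup}: away from the stationary points one has $|\partial_{\xi_1}\Phi|\gtrsim|\xi|^2$ or $|\partial_{\xi_2}\Phi|\gtrsim|\xi|^2$, so the change of variables $\xi_i\mapsto\Phi$ (exactly as in the proof of \eqref{eq:frequencyrestricted}) together with the extra decay from $\jap{\Phi}^\sigma$ closes the estimate with room to spare. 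In the single-high nonresonant regime the phase simplifies to involve only $\xi$ and $\xi_1$, namely $\Phi\approx 3\xi^2(\xi-\xi_1)=3\xi^2(\xi_2+\xi_3)$; when $|\xi_2+\xi_3|\gtrsim1$ this forces $\jap{\Phi}^\sigma\gtrsim\jap{\xi}^{2\sigma}$, so taking $\sigma$ close to $1$ makes $\jap{\xi}^{\epsilon+1-2\sigma}\lesssim1$, and the leftover low-frequency factors are controlled by a convolution estimate finite precisely because $s>1/4$. This is where the constraint $\epsilon<1$ first appears.

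The hard part is the resonant regions, which come in two genuinely different flavours. The first is the fully resonant interaction near the stationary point $(\xi_1,\xi_2,\xi_3)=(\xi,\xi,-\xi)$, where all three frequencies are comparable to $\xi$ and $\Phi\approx 6\xi^3(p_1-1)(p_2-1)$ with $p_j=\xi_j/\xi$. Here \eqref{eq:sup} still works: the multiplier collapses to $\jap{\xi}^{\epsilon+1-2s}\jap{\Phi}^{-\sigma}$, and integrating $\jap{\Phi}^{-2\sigma}$ against $dp_1\,dp_2$ over a neighbourhood of $(1,1)$ yields a factor $\sim\xi^{-3}$ (up to logarithms, dominated by the degenerate cross $\{p_1=1\}\cup\{p_2=1\}$). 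Hence \eqref{eq:sup} produces $\bigl(\jap{\xi}^{2(\epsilon+1-2s)}\,\xi^{2}\,\xi^{-3}\bigr)^{1/2}=\jap{\xi}^{\epsilon+1/2-2s+0^+}$, which is bounded precisely when $\epsilon\le(4s-1)/2$; this is the second, and for $1/4<s<3/4$ the binding, constraint of the corollary.

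The second, genuinely obstructive, resonant region is the high–low–low resonance $\xi_1\approx\xi$ with $\eta:=\xi_2+\xi_3$ small, where $\jap{\Phi}\sim1$ and the phase offers no help. I expect this to be the principal difficulty, because the naive use of \eqref{eq:sup} fails here — supping over any single variable leaves a divergent factor $\jap{\xi}^{\epsilon}$ — the reason being that \eqref{eq:sup} decouples the output $\xi$ from the dominant input $\xi_1\approx\xi$, which are in fact locked together. The remedy is to keep $v(\xi)$ and $v_1(\xi_1)$ paired: for fixed $\eta$ one estimates $\int v_1(\xi_1)v(\xi_1+\eta)\,d\xi_1\le\|v_1\|_{L^2}\|v\|_{L^2}$ and extracts the multiplier in supremum, where (using $\jap{\xi}\sim\jap{\xi_1}$ and $m=\xi$) $\sup_{\xi_1}\jap{\xi_1}^{\epsilon+1}\jap{\xi_1^2\eta}^{-\sigma}\sim|\eta|^{-(\epsilon+1)/2}$. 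The residual integral over $\eta$ of $|\eta|^{-(\epsilon+1)/2}$ against the low-frequency convolution $\bigl(v_2\jap{\cdot}^{-s}\bigr)*\bigl(v_3\jap{\cdot}^{-s}\bigr)$ converges near $\eta=0$ exactly when $\epsilon<1$, while its tail is controlled by the $L^2$-smallness of that convolution, which again relies on $s>1/4$. Exploiting the output–input correlation directly, rather than through \eqref{eq:sup}, is the one step I expect to require real care.
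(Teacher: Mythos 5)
Your proposal is correct and arrives at exactly the constraints of Corollary \ref{teo:mkdv} ($\epsilon<(4s-1)/2$ and $\epsilon<1$) by the same duality reduction and the same basic frequency dichotomy (one high frequency versus at least two comparable to $\xi$, each split into resonant and nonresonant parts), but the implementation is genuinely different from the paper's in every region. You recycle the machinery of the frequency-restricted lemma --- the Cauchy--Schwarz bound \eqref{eq:sup}, the change of variables $\xi_i\mapsto\Phi$ away from stationary points, and, near $(\xi_1,\xi_2,\xi_3)=(\xi,\xi,-\xi)$, the rescaling $p_j=\xi_j/\xi$ with the computation $\int\jap{\xi^3(p_1-1)(p_2-1)}^{-2\sigma}\,dp_1\,dp_2\lesssim|\xi|^{-3+0^+}$ --- whereas the paper never invokes \eqref{eq:sup} in this lemma: it works from the factorization $\Phi=3\tilde{\xi}_1\tilde{\xi}_2\tilde{\xi}_3$, handles the resonant neighbourhood by a further two-scale splitting at $|\xi|^{-\alpha}$ (with $\alpha$ chosen so that $1+\epsilon-2s-\sigma(1-\alpha)\le0$ and $1+\epsilon-2s-\alpha\le0$, which is where $s>(1+2\epsilon)/4$ enters), disposes of the innermost region, where the phase is useless, by Schur's test on the kernel pairing $(v,v_3)$ against $(v_1,v_2)$, and treats the remaining cases by H\"older/Young and weak-Lebesgue estimates with $\sigma$ close to $1$. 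In the high--low--low region both proofs rest on the insight you single out --- $\xi$ and $\xi_1$ are locked together and must stay paired rather than decoupled by \eqref{eq:sup} --- but you realize it via $\sup_\xi\jap{\xi}^{1+\epsilon}\jap{\xi^2\eta}^{-\sigma}\sim|\eta|^{-(1+\epsilon)/2}$ followed by an $L^\infty$/$L^2$ splitting of the $\eta$-integral (needing $\epsilon<1$ near $\eta=0$ and $s>1/4$ for the $L^2$ bound of the low-frequency convolution), while the paper keeps the weight $|\xi-\xi_1|^{-\sigma}$ and runs fractional-integration-type H\"older/Young bounds under $2\sigma>1+\epsilon$. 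What your route buys: a single uniform treatment of the whole resonant neighbourhood (no $\alpha$-splitting, no Schur test), and your pairing argument in fact covers the entire single-high regime at once, making your separate $|\xi_2+\xi_3|\gtrsim1$ case redundant. What the paper's route buys: only one-dimensional integrals and elementary convolution inequalities throughout. One correction of substance: in the two-high nonresonant region the estimate does \emph{not} close ``with room to spare'' --- for $1/4<s<1/2$ the integration of $\jap{\xi_3}^{-2s}$ over $|\xi_3|\lesssim|\xi|$ costs an extra $\jap{\xi}^{1-2s}$, so \eqref{eq:sup} there also yields $\jap{\xi}^{\epsilon+1/2-2s+0^+}$, and the constraint $\epsilon<(4s-1)/2$ binds in that region just as it does at the resonance.
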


\begin{proof}
The estimate is trivial if $|\xi|<1$:
\begin{align*}
I_{\sigma,\epsilon}&\le \left\|\left(\frac{v_1}{\jap{\cdot}^s}\right)\ast \left(\frac{v_2}{\jap{\cdot}^s}\right) \ast \left(\frac{v_3}{\jap{\cdot}^s}\right)\right\|_{L^2(-1,1)}\|v\|_{L^2} \le \left\|\left(\frac{v_1}{\jap{\cdot}^s}\right)\ast \left(\frac{v_2}{\jap{\cdot}^s}\right) \ast \left(\frac{v_3}{\jap{\cdot}^s}\right)\right\|_{L^\infty(-1,1) }\|v\|_{L^2}\\& \le \|v\|_{L^2}\prod_{j=1}^3 \left\|\frac{v_j}{\jap{\cdot}^s}\right\|_{L^{3/2}}\le \prod_{j=1}^4 \|v_j\|_{L^2}.
\end{align*}
Hence, from now on, we consider $|\xi|>1$.

\textbf{Case 1}. If $|\xi_2|\ll |\xi|$, then
$$
\Phi \sim \xi^2(\xi-\xi_1).
$$
If $|\xi-\xi_1|<1$,
\begin{align*}
I_{\sigma, \epsilon} &\lesssim \int m(\Xi)v_1(\xi_1)v(\xi)\left(\int v_2(\xi_2)v_3(\xi_3)d\xi_2\right)d\xi_1d\xi \\&\lesssim \left(\int_{|\xi-\xi_1|<1}\frac{\jap{\xi}^{1+\epsilon}}{\jap{\Phi}^\sigma} v_1(\xi_1)v(\xi)d\xi_1d\xi \right)\left\| \int v_2(\xi_2)v_3(\xi_3)d\xi_2\right\|_{L^\infty_{\xi,\xi_1}}\\&\lesssim\left(\int_{|\xi-\xi_1|<1}\frac{\jap{\xi}^{1+\epsilon}}{|\xi^2(\xi-\xi_1)|^\sigma} v_1(\xi_1)v(\xi)d\xi_1d\xi\right) \|v_2\|_2\|v_3\|_2\\&\lesssim \left(\int_{|\xi-\xi_1|<1}\frac{\jap{\xi}^{1+\epsilon-2\sigma}}{|\xi-\xi_1|^\sigma} v_1(\xi_1)v(\xi)d\xi_1d\xi \right)\|v_2\|_2\|v_3\|_2\\&\lesssim \left\|\int_{|\xi-\xi_1|<1}\frac{\jap{\xi}^{1+\epsilon-2\sigma}}{|\cdot-\xi_1|^\sigma} v_1(\xi_1)d\xi_1 \right\|_{L^2_\xi}\|v_2\|_2\|v_3\|_2\|v\|_{L^2}.
\end{align*}
If
$
2\sigma>1+\epsilon,\ \sigma<1,
$
the estimate follows from H\"older's inequality. On the other hand, if $|\xi-\xi_1|>1$, then $\jap{\Phi}^\sigma \gtrsim \jap{\xi}^{2\sigma}\jap{\xi-\xi_1}$. Then, since $s>1/4$ and $2\sigma>1+\epsilon$,
\begin{align*}
I_{\sigma, \epsilon} &\lesssim \int \frac{\xi^{1+\epsilon-2\sigma}}{\jap{\xi-\xi_1}^\sigma}v(\xi)v_1(\xi_1)v_2(\xi_2)\frac{v_3(\xi_3)}{\jap{\xi_3}^{2s}}d\xi_1d\xi_2d\xi_3\\&\lesssim \left\|\int \frac{1}{\jap{\xi-\xi_1}^\sigma}v(\xi)v_1(\xi_1)v_2(\xi_2)d\xi d\xi_1 \right\|_{L^\infty_{\xi_3}} \int \frac{v_3(\xi_3)}{\jap{\xi_3}^{2s}}d\xi_3\\&\lesssim \|v_1\|_{L^2}\|v_3\|_{L^2}\left\|\int \frac{1}{\jap{\xi-\xi_1}^\sigma}v(\xi)v_2(\xi-\xi_1-\xi_3)d\xi \right\|_{L^\infty_{\xi_3}L^2_{\xi_1}}\\&\lesssim \|v_1\|_{L^2}\|v_3\|_{L^2}\|v\|_{L^2}\left\| \frac{1}{\jap{\cdot}^\sigma}v_2(\cdot-\xi_3)\right\|_{L^\infty_{\xi_3} L^1_{\xi}}\lesssim \prod_{j=1}^{4}\|v_j\|_{L^2},
\end{align*}
where we used Young's inequality in the penultimate step.

\noindent\textbf{Case 2.} If $|\xi_2|\gtrsim |\xi|$: define $\tilde{\x}_j=\xi-\xi_j$. Choose $\alpha$ such that
$$
1+\epsilon-2s-\sigma(1-\alpha)\le 0,\quad -2s+1+\epsilon-\alpha\le 0.
$$
This is possible when $s>(1+2\epsilon)/4$.

\noindent\textit{Case a).} There are two $\tilde{\xi}_j$ which are small: w.l.o.g., $|\tilde{\xi}_1|<1$, $|\tilde{\xi}_2|<1$. Then $|\tilde{\xi}_3|, |\xi_3|\sim |\xi|$. If $|\tilde{\xi}_1|>|\xi|^{-\alpha}$,
\begin{align*}
I_{\sigma, \epsilon} &\lesssim \int \frac{\xi^{1+\epsilon}}{\jap{\xi_2}^s\jap{\xi_3}^s\jap{\Phi}^\sigma}vv_1v_2v_3  \lesssim \int \frac{\xi^{1+\epsilon-2s}}{|\tilde{\xi}_1\tilde{\xi}_2\tilde{\xi}_3|^\sigma}vv_2v_3v_1 \\&\lesssim \int \frac{\xi^{1+\epsilon-2s-\sigma(1-\alpha)}}{|\tilde{\xi}_2|^\sigma}vv_2v_3v_1 \lesssim \int \frac{1}{|\tilde{\xi}_2|^\sigma}vv_2d\xi d\xi_2 \left\|\int v_1v_3d\xi_1\right\|_{L^\infty_{\xi,\xi_2}}\lesssim \left\| |\cdot|^{-\sigma}\ast v\right\|_{L^2}\prod_{j=1}^3 \|v_j\|_{L^2}.
\end{align*}
The case $|\tilde{\xi}_2|>|\xi|^{-\alpha}$ is analogous. If $|\tilde{\xi}_1|,|\tilde{\xi}_2|<|\xi|^{-\alpha}$, we do not make use of $\Phi$. Instead, write
$$
\chi(\eta,\xi)=\mathbbm{1}_{|\eta-\xi|<|\xi|^{-\alpha}},\quad  K(\xi,\xi_3)=\int \xi^{-2s+1+\epsilon}\chi(\xi_1,\xi)\chi(\xi_2,\xi)v_1(\xi_1)v_2(\xi_2)d\xi_1.
$$
Then
$$
I_{\sigma, \epsilon} \lesssim \int K(\xi,\xi_3)v(\xi)v_3(\xi_3)d\xi d\xi_3.
$$
We now apply Schur's test to $K$: fixed $\xi$,
$$
\int K(\xi,\xi_3)d\xi_3 = \xi^{-2s+1+\epsilon}\int \chi(\xi_1,\xi)\chi(\xi_2,\xi)v_1v_2d\xi_1d\xi_2 \lesssim \xi^{-2s+1+\epsilon}\|\chi(\cdot, \xi)\|_{L^2}^2\|v_1\|_{L^2}\|v_2\|_{L^2}.
$$
Fixed $\xi_3$, recalling that $|\xi + \xi_3|= |\tilde{\xi}_1 + \tilde{\xi}_2|<2 |\xi|^{-\alpha}$,

\begin{align*}
\int K(\xi,\xi_3)d\xi &= \int \xi^{-2s+1+\epsilon} \chi(\xi_1,\xi)\chi(\xi_2,\xi)v_1v_2d\xi_1d\xi_2 \\&\lesssim \xi_3^{-2s+1+\epsilon} \int  \chi(\xi_1,-\xi_3)\chi(\xi_2,-\xi_3)d\xi_1d\xi_2  \|v_1\|_{L^2}\|v_2\|_{L^2} \lesssim \|v_1\|_{L^2}\|v_2\|_{L^2}.
\end{align*}
Thus the Schur test is valid and $I_{\sigma, \epsilon} $ is bounded.

\noindent\textit{Case b)}. Only one $\tilde{\xi}_j$ is small: w.l.o.g., $|\tilde{\xi}_1|<1$. If $|\tilde{\xi}_2|, |\tilde{\xi}_3|>|\xi|/2$, then $|\Phi|\gtrsim |\xi-\xi_1|\xi^2$ and one may follow the steps in Case 1. If $|\tilde{\xi}_2|<|\xi|/2$, then $\xi_3\sim -\xi$ and $|\tilde{\xi}_3|\gtrsim |\xi|$. Thus
\begin{align*}
I_{\sigma, \epsilon} &\lesssim \int \frac{\xi^{-2s+1+\epsilon}}{|\tilde{\xi}_1\tilde{\xi}_2\tilde{\xi}_3|^\sigma}\lesssim \int \frac{\xi^{-2s+1+\epsilon-\sigma}}{|\xi-\xi_1|^\sigma}vv_1v_2v_3 \\&\lesssim \left(\int_{|\xi-\xi_1|<1}\frac{\jap{\xi}^{-2s+1+\epsilon-\sigma}}{|\xi-\xi_1|^\sigma} v_1(\xi_1)v(\xi)d\xi_1d\xi \right)\|v_2\|_2\|v_3\|_2
\end{align*}
which is bounded since $2s+\sigma>1+\epsilon$.

\noindent\textit{Case c).} No $\tilde{\xi}_j$ is small. Then at least one $\tilde{\xi}_j$ satisfies $|\tilde{\xi}_j|\gtrsim |\xi|$: indeed, if, for example,
$$
|\tilde{\xi}_1|, |\tilde{\xi}_2|< \frac{|\xi|}{4},
$$
then $|\tilde{\xi}_3| = |2\xi-\tilde{\xi}_1-\tilde{\xi}_2| \gtrsim |\xi|$. Moreover, it is not difficult to see that, if only one $\tilde{\xi}_j$ satisfies $|\tilde{\xi}_j|\gtrsim |\xi|$, then $|\xi_3|\gtrsim|\xi|$. This means that, among $\tilde{\xi}_1,\tilde{\xi}_2,\tilde{\xi}_3$ and $\xi_3$, at least two of them are of size $\xi$ (or larger). Hence
\begin{align*}
I_{\sigma, \epsilon} &\lesssim \int_{|\xi-\xi_1|>1} \frac{\xi^{-2s+1+\epsilon-\sigma}}{|\xi-\xi_1|^{\sigma}}vv_1v_2v_3 \lesssim \int_{|\xi-\xi_1|>1} \frac{\xi^{-2s+1+\epsilon-\sigma}}{|\xi-\xi_1|^{\sigma}}vv_1d\xi d\xi_1 \left\|\int v_2v_3 d\xi_2 \right\|_{L^\infty_{\xi,\xi_1}} \\&\lesssim \left\|\int_{|\xi-\xi_1|>1} \frac{\xi^{-2s+1+\epsilon-\sigma}}{|\xi-\xi_1|^{\sigma}}vd\xi \right\|_{L^2_{\xi_1}}\|v_1\|_{L^2}\|v_2\|_{L^2}\|v_3\|_{L^2} \lesssim \left\|\frac{1}{|\cdot|^{\sigma}} \right\|_{L^{1/\sigma}_w}\left\|\jap{\xi}^{-2s+1+\epsilon-\sigma} \right\|_{L^{1/(1-\sigma)}}\prod_{j=1}^4\|v_j\|_{L^2}
\end{align*}
and the bound follows for $\sigma$ sufficiently close to 1. 
\end{proof}

\subsection{Nonlinear smoothing for the cubic (NLS)}
In this case,
$$
\Phi(\Xi)=(\xi-\xi_1)(\xi-\xi_3),\quad m(\Xi)=1.
$$
We define $\tilde{\xi}_1=\xi-\xi_1$, $\tilde{\xi}_3=\xi-\xi_3$. Due to the lack of symmetry in the phase, $\xi_2$ does not play the same role as $\xi_1,\xi_3$.

\begin{lem}
	For any $s\ge 0$, the estimate \eqref{eq:frequencyrestricted} holds for the \eqref{nls}.
\end{lem}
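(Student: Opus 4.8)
The plan is to argue by duality, reducing \eqref{eq:frequencyrestricted} to the pointwise bound \eqref{eq:integralam}. Writing $\tilde\xi_1=\xi-\xi_1$ and $\tilde\xi_3=\xi-\xi_3$, recall that here $m\equiv1$ and $\Phi(\Xi)=\tilde\xi_1\tilde\xi_3$, so that
$$
\frac{\partial\Phi}{\partial\tilde\xi_1}=\tilde\xi_3,\qquad\frac{\partial\Phi}{\partial\tilde\xi_3}=\tilde\xi_1,
$$
and the only stationary point of the phase is $\tilde\xi_1=\tilde\xi_3=0$. The frequency relation gives $\jap{\xi}\lesssim\max_j\jap{\xi_j}$, hence the weight satisfies $\jap{\xi}^{s}\jap{\xi_1}^{-s}\jap{\xi_2}^{-s}\jap{\xi_3}^{-s}\lesssim1$ for every $s\ge0$; by the monotonicity in $s$ recorded in the remark after Theorem~\ref{thm:nonlinearsmooth} it therefore suffices to establish the estimate at the endpoint $s=0$, the remaining cases following a fortiori.

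Following the principle described above, I would fix $\xi$, integrate in $(\tilde\xi_1,\tilde\xi_3)$, and split according to which variable is larger. By symmetry assume $|\tilde\xi_1|\ge|\tilde\xi_3|$, so that $|\partial_{\tilde\xi_3}\Phi|=|\tilde\xi_1|$ is comparable to $|\nabla\Phi|$ and one may use $\tilde\xi_3$ as the variable in which to integrate out the phase. Performing the change of variables $\tilde\xi_3\mapsto\Phi$ (Jacobian $|\tilde\xi_1|^{-1}$) and applying Cauchy–Schwarz to the characteristic function $\mathbbm{1}_{|\Phi-\alpha|<M}$ produces the factor $M^{1/2}$ from the length of the $\Phi$-window. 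The one structural difference with the \eqref{mkdv} computation is that $\partial_{\tilde\xi_3}\Phi$ has degree one, so the Jacobian weight $|\tilde\xi_1|^{-1}$ is carried by the very variable that remains; consequently the crude bound \eqref{eq:sup}, in which all of the $v_j$ are discarded, would leave a logarithmically divergent integral in $\tilde\xi_1$. I would circumvent this by retaining the $L^2$–decay of one factor — concretely, keeping $v_2(\xi_2)$ and $v(\xi)$ and estimating the remaining expression as a bilinear form in $v_1,v_3$ (this is where the bilinear Strichartz mechanism enters). For $s>0$ the issue is invisible, as the genuine weight decays like $\jap{\tilde\xi_1}^{-2s}$ at high transversal frequency and \eqref{eq:sup} already converges; it is only at $s=0$ that the functions' decay must be exploited.

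In the complementary region $|\tilde\xi_1|,|\tilde\xi_3|\lesssim1$ the phase satisfies $|\Phi|\lesssim1$, so the restriction $|\Phi-\alpha|<M$ both localises the frequencies to a bounded set and forces $|\alpha|\lesssim M+1$; here a direct Cauchy–Schwarz closes the estimate and accounts for the harmless $\jap{\alpha}^{0^+}$ factor. The main obstacle — and the reason the statement carries the small losses $\jap{\alpha}^{0^+}\jap{M}^{1/2^+}$ rather than a clean $M^{1/2}$ — is precisely that the level sets $\{\Phi=\mathrm{const}\}$ are unbounded hyperbolas: the set $\{|\Phi-\alpha|<M\}$ has infinite measure, so, in contrast to the \eqref{mkdv} case whose quadratic phase confines the relevant region, one cannot bound $I^{\alpha,M}$ by the measure of this set alone. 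Controlling the contribution of arbitrarily high transversal frequencies, where the degree-one Jacobian offers no decay, is the crux; I expect to handle it by retaining the $L^2$ mass of the functions (equivalently, by a dyadic decomposition in the separation $|\tilde\xi_1-\tilde\xi_3|$ of the like-signed frequencies), with the endpoint $s=0$ being the most delicate instance.
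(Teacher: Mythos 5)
Your setup (duality, the reduction to $s=0$ via boundedness of the weight, and the identification of the hyperbolic level sets as the obstruction) is sound, but the proof has a genuine gap at its central step, and the gap is created by your choice of decomposition. You split by relative size, assume $|\tilde{\xi}_1|\ge|\tilde{\xi}_3|$, and integrate out the \emph{smaller} variable $\tilde{\xi}_3\mapsto\Phi$; this leaves the Jacobian $|\tilde{\xi}_1|^{-1}$ in precisely the variable that the constraint does not control, and you correctly observe that the resulting integral $\int_{|\tilde{\xi}_1|\gtrsim 1} M|\tilde{\xi}_1|^{-1}d\tilde{\xi}_1$ diverges. At that point the argument stops: the ``bilinear Strichartz mechanism'' / ``dyadic decomposition in the separation'' is announced but never executed, and it does not obviously close. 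For instance, retaining $L^2$ mass as you suggest (Cauchy--Schwarz in $(\xi,\tilde{\xi}_3)$ keeping the pair $(v,v_1)$, and using that for fixed $\tilde{\xi}_1$ the constraint confines $\tilde{\xi}_3$ to an interval of length $2M/|\tilde{\xi}_1|$) gives at best
$$
\int_{|\tilde{\xi}_1|>1}\Bigl(\tfrac{M}{|\tilde{\xi}_1|}\Bigr)^{1/2}g(\tilde{\xi}_1)\,d\tilde{\xi}_1\,\|v_2\|_{L^2}\|v_3\|_{L^2},
\qquad g(\tilde{\xi}_1)=\Bigl(\int v^2(\xi)\,v_1^2(\xi-\tilde{\xi}_1)\,d\xi\Bigr)^{1/2},
$$
and since one only knows $\|g\|_{L^2}\le\|v\|_{L^2}\|v_1\|_{L^2}$, closing this requires $|\tilde{\xi}_1|^{-1/2}\in L^2(\{|\tilde{\xi}_1|>1\})$, which fails: the same logarithmic divergence reappears. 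So the crux of the lemma, at the endpoint $s=0$ you rightly single out, is left unproven.

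What you missed is that the region you are worried about contains no difficulty, for an elementary reason: on the support of $\mathbbm{1}_{|\Phi-\alpha|<M}$, if \emph{both} $|\tilde{\xi}_1|,|\tilde{\xi}_3|>1$, then $|\tilde{\xi}_3|\le|\tilde{\xi}_1||\tilde{\xi}_3|=|\Phi|\le|\alpha|+M$, and likewise $|\tilde{\xi}_1|\le|\alpha|+M$; the constraint confines \emph{both} variables to a bounded set. Accordingly, the paper splits at the absolute threshold $1$ rather than by relative size. (i) If $|\tilde{\xi}_1|<1$ (or, symmetrically, $|\tilde{\xi}_3|<1$), the phase restriction is simply discarded: for each fixed $\tilde{\xi}_1$ the convolution structure pairs $(v_2,v_3)$ in $\xi_3$ and $(v,v_1)$ in $\xi$ by Cauchy--Schwarz, and one then integrates over $|\tilde{\xi}_1|<1$ --- no factor of $M$ or $\alpha$ appears. (ii) If both exceed $1$, one changes variables $\xi_1\mapsto\Phi$ (Jacobian $|\tilde{\xi}_3|^{-1}$) and integrates $\tilde{\xi}_3$ only over $1<|\tilde{\xi}_3|<|\alpha|+M$, so that inside the square root of \eqref{eq:sup} one gets $M\log(|\alpha|+M)\lesssim \jap{M}^{1^+}\jap{\alpha}^{0^+}$. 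In particular, your ``crux'' region of arbitrarily large $|\tilde{\xi}_1|$ forces $|\tilde{\xi}_3|<1$ and thus falls into case (i), where no measure estimate is needed at all. Note also that your attribution of the $\jap{\alpha}^{0^+}$ loss to the region where both variables are $O(1)$ is backwards: it comes from the logarithm over the range $1<|\tilde{\xi}_3|<|\alpha|+M$ in the both-large region.
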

\begin{proof}
	Observe that 
	$$
	\frac{\partial \Phi}{\partial \xi_1}=\tilde{\xi}_3,\quad 	\frac{\partial \Phi}{\partial \xi_3}=\tilde{\xi}_1.
	$$

	Since
	$$
	\frac{\jap{\x}^s}{\jap{\x_1}^s\jap{\x_2}^s\jap{\x_3}^s} \lesssim  1,
	$$
	it suffices to prove
	$$
	\left|\int_{\substack{\x_1+\dots + \x_k=\x\\|\Phi(\Xi)-\alpha|<M}} v_1v_2v_3vd\x_1d\x_3d\x \right| \lesssim \prod_{j=1}^4 \|v_j\|_2.
	$$
	
	\noindent\textbf{Case 1.} Either $|\tilde{\xi}_1|<1$ or $|\tilde{\xi}_3|<1$. If $|\tilde{\xi}_1|<1$, the Cauchy-Schwarz inequality implies directly that
	\begin{align*}
	\left|\int_{\substack{\x_1+\dots + \x_k=\x\\|\Phi(\Xi)-\alpha|<M}} v_1v_2v_3vd\x_1d\x_3d\x \right|\lesssim \int_{|\tilde{\xi}_1|<1} \left(\int v_2v_3 d\x_3\right)\left(\int vv_1 d\x\right)d\tilde{\x}_1 \lesssim \prod_{j=1}^4 \|v_j\|_{L^2}.
	\end{align*}
	
	\noindent\textbf{Case 2.} $|\tilde{\xi}_3|, |\tilde{\xi}_1|>1$. In this case, we have
	$$
	1\le |\xi-\xi_3|< \frac{|\alpha|+ M}{|\xi-\xi_1|}< |\alpha| + M
	$$
	and one may perform the change of variables $\xi_1 \to \Phi$:
	\begin{align}
	& \left|\int_{\substack{\x_1+\dots + \x_k=\x}} \mathbbm{1}_{|\Phi-\alpha|<M} v_1v_2v_3vd\x_1d\x_3d\x \right| \lesssim  	\sup_\xi \left(\int\mathbbm{1}_{|\Phi-\alpha|<M}  d\x_1 d\x_3 \right)^{1/2}\prod_{j=1}^4\|v_j\|_2\\\lesssim &\sup_\xi \left(\int_{1<|\xi-\xi_3|<|\alpha|+ M}\frac{\mathbbm{1}_{|\Phi-\alpha|<M} }{|\xi-\xi_3|} d\Phi d\x_3 \right)^{1/2}\prod_{j=1}^4\|v_j\|_2 \lesssim \jap{M}^{1/2^+}\jap{\alpha}^{0^+}\prod_{j=1}^4\|v_j\|_2.
	\end{align}
\end{proof}

\begin{lem}
	Under the conditions of Corollary \ref{teo:nls}, estimate \eqref{eq:basicaepsilon} holds.
\end{lem}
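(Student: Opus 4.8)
The plan is to establish \eqref{eq:basicaepsilon} through its dual form \eqref{eq:integralse}, i.e. to bound the quadrilinear integral $I_{\sigma,\epsilon}$ with weight $W(\Xi)=\jap{\xi}^{s+\epsilon}\jap{\xi_1}^{-s}\jap{\xi_2}^{-s}\jap{\xi_3}^{-s}\jap{\Phi}^{-\sigma}$, where $\Phi=\tilde\xi_1\tilde\xi_3$ and $\tilde\xi_j=\xi-\xi_j$. Since $\Phi$ is symmetric under $\tilde\xi_1\leftrightarrow\tilde\xi_3$ (equivalently $\xi_1\leftrightarrow\xi_3$) while $\xi_2=\xi_1+\xi_3-\xi$ is unchanged, I may assume $|\tilde\xi_1|\ge|\tilde\xi_3|$. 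The region $|\xi|\lesssim1$ is immediate: there $\jap{\xi}^{s+\epsilon}\lesssim1$, $\jap{\Phi}^{-\sigma}\le1$, and the remaining trilinear convolution is controlled by Cauchy--Schwarz in $\xi$ together with a Young-type estimate on the profiles. From now on $|\xi|\gg1$, and I organize the argument according to the stated philosophy: either the phase is large enough to be integrated against (nonresonant regime), or several frequencies are comparable to $\xi$ so that the Sobolev weights alone absorb the extra $\jap{\xi}^\epsilon$ (resonant regime).

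In the nonresonant regime, where $|\Phi|$ is large, I would turn one of $\xi_1,\xi_3$ into $\Phi$, exactly as in the proof of \eqref{eq:frequencyrestricted} for \eqref{nls}: this is legitimate because the corresponding derivative $\partial_{\xi_1}\Phi=\tilde\xi_3$ or $\partial_{\xi_3}\Phi=\tilde\xi_1$ is bounded below in this regime, and it reduces the inner integral to one dimension with Jacobian $|\tilde\xi_1|^{-1}$. The point is that $\jap{\Phi}^{-2\sigma}$ is then integrable (I take $\sigma$ close to $1$) and the net power of $\jap{\xi}$ produced by $\jap{\xi}^{s+\epsilon}$ against the weights and the Jacobian is nonpositive precisely when $\epsilon<1$. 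Concretely, I would apply the Cauchy--Schwarz estimate \eqref{eq:sup}, reducing matters to checking that $\sup_\xi\big(\int W^2\,d\xi_1\,d\xi_3\big)^{1/2}$ is finite; this is where the hypothesis $\epsilon<1$ and the freedom to choose $\sigma\in(0,1)$ enter.

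In the resonant regime the phase is useless, and I split according to how many inputs are comparable to $\xi$. When all three of $\xi_1,\xi_2,\xi_3$ are $\sim\xi$ (equivalently $\tilde\xi_1,\tilde\xi_3$ are both small relative to $\xi$), two of the Sobolev weights yield $\jap{\xi}^{-2s}$, so $W\lesssim\jap{\xi}^{\epsilon-2s}\lesssim1$ by $\epsilon<2s$, and the quadrilinear form is then bounded by the same Cauchy--Schwarz/Young argument used for $|\xi|\lesssim1$. The delicate subcase is when exactly one input, say $\xi_1$, is $\sim\xi$ while $\xi_2,\xi_3$ are low and $|\Phi|\lesssim1$: here $\tilde\xi_3\sim\xi$, the near-resonance forces $|\tilde\xi_1|=|\xi-\xi_1|\lesssim\jap{\xi}^{-1}$, and the weight degenerates to $W\sim\jap{\xi}^{\epsilon}$ (with $\xi_2\approx\xi_3$ low). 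I would dispatch the low-frequency factors $v_2,v_3$ by Cauchy--Schwarz, leaving the quadrilinear form controlled by the bilinear operator with kernel $\jap{\xi}^\epsilon\mathbbm{1}_{|\xi-\xi_1|\lesssim\jap{\xi}^{-1}}$ acting on the high-frequency pair $v(\xi),v_1(\xi_1)$, to which I apply Schur's test.

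The main obstacle is precisely this deep-resonance subcase. A naive Cauchy--Schwarz, integrating $v_1$ over the thin $O(\jap{\xi}^{-1})$ resonant slab and then pairing $\jap{\xi}^\epsilon v$ against what remains, is too lossy and only closes for $\epsilon<1/2$. The correct tool is Schur's test: both the row and the column sums of the kernel $\jap{\xi}^\epsilon\mathbbm{1}_{|\xi-\xi_1|\lesssim\jap{\xi}^{-1}}$ are of size $\jap{\xi}^{\epsilon-1}$, hence bounded exactly when $\epsilon<1$, giving the quadrilinear bound with no loss. This is also the step where the distinction between $\real$ and $\T$ is felt: on $\real$ the exact resonance $\{\xi_1=\xi\}$ has measure zero and contributes nothing, whereas the same interaction is a genuine obstruction on the torus. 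Collecting the constraints, the resonant block requires $\epsilon<2s$ while the nonresonant block and the Schur step require $\epsilon<1$, which together give the range $\epsilon<\min\{2s,1\}$ of Corollary \ref{teo:nls}, with $\sigma$ taken close to $1$.
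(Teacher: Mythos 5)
There is a genuine gap in your nonresonant regime. You reduce that regime to checking that $\sup_\xi\bigl(\int W^2\,d\xi_1\,d\xi_3\bigr)^{1/2}<\infty$ via \eqref{eq:sup}, with the full weight $W=\jap{\xi}^{s+\epsilon}\jap{\xi_1}^{-s}\jap{\xi_2}^{-s}\jap{\xi_3}^{-s}\jap{\Phi}^{-\sigma}$, and you claim this closes whenever $\epsilon<1$. It does not: this supremum is infinite for every $\epsilon>1/2$, no matter how $\sigma\in(0,1)$ is chosen. Take $\xi$ large and consider the slab $S_\xi=\{(\xi_1,\xi_3):\ \xi_3\in[1,2],\ \xi-\xi_1\in[10/\xi,20/\xi]\}$. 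On $S_\xi$ one has $\xi_1\approx\xi$, $\xi_2=\xi_1+\xi_3-\xi\approx\xi_3\in[1,2]$, and $\Phi=(\xi-\xi_1)(\xi-\xi_3)\sim 10$, so this set lies squarely in your nonresonant regime ($|\Phi|\gtrsim 1$) while $\jap{\Phi}^{-2\sigma}\sim 1$; the weight there is $W\sim\jap{\xi}^{\epsilon}$ and $|S_\xi|\sim 1/\xi$, whence $\int_{S_\xi}W^2\gtrsim\jap{\xi}^{2\epsilon-1}\to\infty$. More generally, in the region where two frequencies are $O(1)$ and one is $\approx\xi$, the estimate \eqref{eq:sup} integrates the low-frequency Sobolev weights instead of using the $L^2$ mass of $v_2,v_3$, and a computation of all such contributions caps your method at $\epsilon\le\min\{2s,1/2\}$; so precisely the range $\epsilon\in(1/2,\min\{2s,1\})$ with $s>1/4$, which Corollary \ref{teo:nls} asserts, is out of reach. (Ironically, you identified this exact loss --- ``a naive Cauchy--Schwarz \dots only closes for $\epsilon<1/2$'' --- in your deep-resonance subcase, but the same loss occurs in your nonresonant regime, where the thin set $\{|\Phi|\sim 1\}$ plays the role of the thin resonant slab.)

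The paper's proof avoids this by never applying \eqref{eq:sup} in that region. In its Case 1 ($|\xi_3|\ll|\xi|$, which contains the set $S_\xi$ above) and Case 2a, it first pairs the two low frequencies bilinearly, $\bigl\|\int v_2v_3\,d\xi_2\bigr\|_{L^\infty_{\xi,\xi_1}}\le\|v_2\|_{L^2}\|v_3\|_{L^2}$, so that no $\jap{\xi_2}^{-2s}\jap{\xi_3}^{-2s}$ ever gets integrated; what remains is a two-variable kernel $\xi^{\epsilon-\sigma}|\xi-\xi_1|^{-\sigma}$ acting on $(v,v_1)$, which is bounded on $L^2$ by weak Young/H\"older,
\begin{equation}
\left\|\frac{1}{|\cdot|^{\sigma}}\right\|_{L^{1/\sigma}_w}\left\|\jap{\xi}^{\epsilon-\sigma}\right\|_{L^{1/(1-\sigma)}}<\infty \quad\text{provided }\sigma>(1+\epsilon)/2,
\end{equation}
a choice available for every $\epsilon<1$. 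Your Schur-test idea is the right kind of bilinear tool, and a repaired proof along your lines would have to extend it (keeping the $\jap{\Phi}^{-\sigma}$ factor inside the $(\xi,\xi_1)$ kernel rather than reverting to \eqref{eq:sup} once $|\Phi|\gtrsim1$); as written, however, your argument does not prove the Lemma in the stated range.
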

\begin{proof}
\textbf{Case 1.} If $|\xi_3|\ll |\xi|$, then
$$
\frac{\jap{\xi}^s}{\jap{\xi_1}^s\jap{\xi_2}^s}\lesssim 1,\quad |\Phi|\gtrsim |\xi \tilde{\xi}_1|.
$$
We then estimate
\begin{align*}
I_{\sigma, \epsilon} &\lesssim \int \frac{\xi^{\epsilon-\sigma}}{|\xi-\xi_1|^{\sigma}}vv_1v_2v_3 \lesssim \int \frac{\xi^{\epsilon-\sigma}}{|\xi-\xi_1|^{\sigma}}vv_1d\xi d\xi_1 \left\|\int v_2v_3 d\xi_2 \right\|_{L^\infty_{\xi,\xi_1}} \\&\lesssim \left\|\int \frac{\xi^{\epsilon-\sigma}}{|\xi-\xi_1|^{\sigma}}vd\xi \right\|_{L^2_{\xi_1}}\|v_1\|_{L^2}\|v_2\|_{L^2}\|v_3\|_{L^2} \lesssim \left\|\frac{1}{|\cdot|^{\sigma}} \right\|_{L^{1/\sigma}_w}\left\|\jap{\xi}^{\epsilon-\sigma} \right\|_{L^{1/(1-\sigma)}}\prod_{j=1}^4\|v_j\|_{L^2}
\end{align*}
which holds as long as $\sigma>(1+\epsilon)/2$. An analogous estimate can be made if $|\xi_1|\ll |\xi|$.

\noindent\textbf{Case 2.} If $|\xi_1|, |\xi_3|\gtrsim |\xi|$, we split in two subcases:

\noindent\textit{Case a).} If $|\xi_2|\ll |\xi|$, then $|\tilde{\xi}_3|\gtrsim |\xi|$. Hence
$$
\frac{\jap{\xi}^s}{\jap{\xi_1}^s\jap{\xi_3}^s}\lesssim \jap{\xi}^{-s},\quad |\Phi|=|(\xi-\xi_1)(\xi_1-\xi_2)|\gtrsim|\xi \tilde{\xi}_1|
$$ 
 and one may proceed as in Case 1.
 
\noindent\textit{Case b).} If $|\xi_2|\gtrsim |\xi|$ and $|\tilde{\xi}_3|>1$, 
$$
\frac{\jap{\xi}^{s+\epsilon}}{\jap{\xi_1}^s\jap{\xi_2}^s\jap{\xi_3}^s\jap{\Phi}^\sigma}\lesssim \frac{\jap{\xi}^{\epsilon-2s}}{|\tilde{\xi}_1|^\sigma}.
$$
A similar estimate to that of Case 1 yields
$$
I_{\sigma, \epsilon} \lesssim \left\|\frac{1}{|\cdot|^{\sigma}} \right\|_{L^{1/\sigma}_w}\left\|\jap{\xi}^{\epsilon-2s} \right\|_{L^{1/(1-\sigma)}}\prod_{j=1}^4\|v_j\|_{L^2}.
$$
The right hand side is bounded as long as we choose $\sigma$ so that $2s+\sigma>1+\epsilon$.

\noindent\textit{Case c).}  If $|\xi_2|\gtrsim |\xi|$ and $|\tilde{\xi}_1|,|\tilde{\xi}_3|<1$, 
$$
\frac{\jap{\xi}^{s+\epsilon}}{\jap{\xi_1}^s\jap{\xi_2}^s\jap{\xi_3}^s\jap{\Phi}^\sigma}\lesssim \jap{\xi}^{\epsilon-2s} \lesssim 1
$$
and one may estimate directly
\begin{align*}
I_{\sigma, \epsilon} \lesssim \int_{|\tilde{\xi}_1|, |\tilde{\xi}_3|<1} vv_1v_2v_3d\xi d\xi_1d\xi_3 \lesssim  \int_{|\tilde{\xi}_1|, |\tilde{\xi}_3|<1} v_1v_3d\xi_1d\xi_3 \left\|\int vv_2d\xi \right\|_{L^\infty_{\xi_1,\xi_3}} \lesssim \prod_{j=1}^4 \|v_j\|_2.
\end{align*}

\end{proof}

\subsection{Nonlinear Smoothing for the (KdV)}

In the context of the (KdV), we have
$$
\Phi(\Xi)=3\xi\xi_1\xi_2,\quad m(\Xi)=\xi.
$$
Unfortunately, the $H^s$ norm will not be enough to prove estimate \eqref{eq:basicaepsilon}. From now on, assume the conditions of Corollary \ref{teo:kdv}. Choose $1<\rho,\lambda<\infty$ and $\sigma\in (1/2,1)$ such that
\begin{equation}\label{eq:defirholambda}
1-\sigma =\frac{1}{\rho} + \frac{1}{\lambda},\quad \rho(2\sigma-1-\epsilon)>1,\quad \lambda\le 2/(1-s)^+.
\end{equation}
This is possible since $\epsilon<\min\{1,s\}$. 

Define
$$
\|u\|_{\mathcal{H}^s}:= \|u\|_{H^s}  + \|\hat{u}\|_{L^\lambda(-1,1)}.
$$
Observe that, since we consider the (KdV) flow on $H^s\cap L^2(|x|^{s/2}dx)$,  $\tilde{u}\in H^{s/2}\hookrightarrow L^\lambda$. Once we prove the adapted bounds \eqref{eq:basicaepsilon} and \eqref{eq:frequencyrestricted} on $\mathcal{H}^s$, Corollary \ref{teo:kdv} follows from the analogous induction on the normal form reduction.

\begin{lem}[Adapted frequency-restricted estimate]
	Under the hypothesis of Corollary \ref{teo:kdv}, for any $u_1,u_2\in \mathcal{H}^s$,  one has
	\begin{equation}
	\|T^{\alpha,M}(u_1,u_2)\|_{\mathcal{H}^s} \lesssim \jap{M}^{1/2}\|u_1\|_{\mathcal{H}^s}\|u_2\|_{\mathcal{H}^s}.
	\end{equation}
\end{lem}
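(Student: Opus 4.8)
The plan is to prove the estimate by duality, treating the two summands of
$\|T^{\alpha,M}(u_1,u_2)\|_{\mathcal{H}^s}=\|T^{\alpha,M}(u_1,u_2)\|_{H^s}+\|\widehat{T^{\alpha,M}(u_1,u_2)}\|_{L^\lambda(-1,1)}$ separately. I would keep in mind that for the (KdV) one has $m(\Xi)=\xi$, $\Phi=3\xi\xi_1\xi_2$, hence $\partial_{\xi_1}\Phi=3\xi(\xi_2-\xi_1)$, with stationary set $\{\xi=0\}\cup\{\xi_1=\xi_2=\xi/2\}$. Since $\|\cdot\|_{L^2}\le\|\cdot\|_{H^s}\le\|\cdot\|_{\mathcal{H}^s}$ for $s\ge0$, it suffices to control each input factor by the $L^2$ norm of $\widehat{u_j}$ away from the origin and by $\|\widehat{u_j}\|_{L^\lambda(-1,1)}$ near it.

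For the $H^s$ summand I would argue exactly as in the (mKdV) lemma, dualising to $I^{\alpha,M}$ with weight $\xi\jap{\xi}^s/(\jap{\xi_1}^s\jap{\xi_2}^s)$ and assuming $|\xi_1|\ge|\xi_2|$. On the \emph{bulk} region $|\xi|\gtrsim1$, $|\xi_1-\xi_2|\gtrsim|\xi|$, the Jacobian satisfies $|\partial_{\xi_1}\Phi|\gtrsim|\xi|^2$, so after the change of variables $\xi_1\mapsto\Phi$ the Cauchy--Schwarz bound \eqref{eq:sup} absorbs the weight against $|\partial_{\xi_1}\Phi|^{-1}$ while the measure of $\{|\Phi-\alpha|<M\}$ produces the factor $\jap{M}^{1/2}$, as in Case 1 there. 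Near the \emph{stationary point} $\xi_1=\xi/2$ I would instead freeze $\Phi\approx 3\xi^3/4$ and integrate in $\xi$ against $\mathbbm{1}_{|3\xi^3/4-\alpha|<M}$, reproducing Case 2a). The structural point is that for $|\xi|\gtrsim1$ a small input frequency forces $\xi_1\approx\xi$, far from the stationary point, so the two regimes never overlap and the $H^s$ (indeed $L^2$) norms alone suffice for this summand.

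The genuinely new contribution is the low output frequency $|\xi|<1$ entering $\|\widehat{T^{\alpha,M}}\|_{L^\lambda(-1,1)}$. Here the phase degenerates in the internal variable, $\partial_{\xi_1}\Phi=3\xi(\xi_2-\xi_1)=O(\xi)$, so the $\Phi$-change of variables is lost; I would instead exploit the two sources of smallness at hand: the multiplier $m=\xi$, and the fact that $\xi_1\mapsto\Phi$ is quadratic with $|\partial_{\xi_1}^2\Phi|=6|\xi|$, whence the sublevel set $\{\xi_1:|\Phi-\alpha|<M\}$ has measure $\lesssim(M/|\xi|)^{1/2}$. Splitting the inputs, when both are high ($|\xi_1|\sim|\xi_2|\gtrsim1$) the weights $\jap{\xi_1}^{-s}\jap{\xi_2}^{-s}$ together with Cauchy--Schwarz and the sublevel bound close the estimate using only $H^s$; when all frequencies are $\lesssim1$ the phase carries no information and I would place $\widehat{u_1},\widehat{u_2}$ in $L^\lambda(-1,1)$ and combine them through Young's and H\"older's inequalities, the exponents being matched precisely by the conditions in \eqref{eq:defirholambda}. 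In either sub-case the factor $|\xi|$ outweighs the negative power of $|\xi|$ coming from the sublevel measure, yielding a clean $\jap{M}^{1/2}$ and a convergent integral in $\xi$ over $(-1,1)$.

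The main obstacle is the low-frequency degeneracy of the (KdV) resonance $\Phi=3\xi\xi_1\xi_2$, which vanishes to first order in each frequency: in the region where all frequencies are small the phase is useless and the Cauchy--Schwarz/change-of-variables scheme breaks down. This is exactly where the adapted space $\mathcal{H}^s$ is needed — its $L^\lambda(-1,1)$ component supplies the integrability that the bare $L^2$ phase-space estimate lacks — and it is the feature distinguishing (KdV) from the (mKdV) and (NLS) cases, accounting for the weighted assumption $u_0\in L^2(|x|^{s/2}dx)$.
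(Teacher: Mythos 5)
Your treatment of the $H^s$ component coincides with the paper's proof: the same dichotomy $|\xi_1-\xi_2|\gtrsim|\xi|$ (change of variables $\xi_1\mapsto\Phi$, with Jacobian $|\partial_{\xi_1}\Phi|\sim|\xi||\xi_2-\xi_1|$ absorbing the multiplier $\xi^2$ since $|\xi|/|\xi_2-\xi_1|\lesssim1$) versus $|\xi_1-\xi_2|\ll|\xi|$ (where $\xi_1,\xi_2\sim\xi/2$, $\Phi\sim\tfrac34\xi^3$, and one takes the sup in $\xi_1$ and integrates $\xi^2\mathbbm{1}_{|3\xi^3/4-\alpha|<M}\,d\xi\lesssim M$), both closed via \eqref{eq:sup} with only $L^2$ norms of the inputs. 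One small difference: the paper runs this dichotomy for \emph{all} $\xi$, since the bound $\xi^2\,d\xi_1=\frac{|\xi|}{|\xi_2-\xi_1|}\,d\Phi\lesssim d\Phi$ is uniform down to $\xi=0$, so no separate low-$|\xi|$ analysis of the $H^s$ norm is required.

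Where you genuinely diverge is the $L^\lambda(-1,1)$ component of the output, and there your route is both unnecessary and, as written, has a gap. In the paper this bound is a one-liner that never sees the phase: for $|\xi|<1$ the multiplier satisfies $|\xi|\le1$, one drops the restriction $\mathbbm{1}_{|\Phi-\alpha|<M}$ entirely, and pointwise Cauchy--Schwarz gives $|\widehat{T^{\alpha,M}}(\xi)|\lesssim\|v_1\|_{L^2}\|v_2\|_{L^2}$, i.e.\ an $L^\infty(-1,1)$ bound, which embeds in $L^\lambda(-1,1)$ --- no factor of $M$, no sublevel sets, no $L^\lambda$ norms of the inputs. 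Your sublevel-set argument instead applies Cauchy--Schwarz against the indicator of $\{|\Phi-\alpha|<M\}$, which leaves the factor $\bigl(\int|\hat u_1(\xi_1)|^2|\hat u_2(\xi-\xi_1)|^2\,d\xi_1\bigr)^{1/2}$; this is \emph{not} pointwise bounded (it is a convolution of two merely $L^1$ functions), only its $L^2_\xi$ norm equals $\|\hat u_1\|_{L^2}\|\hat u_2\|_{L^2}$. Combined with $|\xi|\cdot(M/|\xi|)^{1/2}$ this yields an $L^2(-1,1)$ bound on the output, which suffices only if $\lambda\le2$. But \eqref{eq:defirholambda} with $\sigma\in(1/2,1)$ and $\rho<\infty$ forces $1/\lambda<1-\sigma<1/2$, i.e.\ $\lambda>2$, so $L^2(-1,1)$ does not embed in $L^\lambda(-1,1)$ and your estimate does not close as stated. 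The repair is exactly the trivial bound above, which makes the whole sublevel machinery superfluous for this lemma.

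Relatedly, your closing diagnosis misplaces the role of the adapted space: in \emph{this} lemma the inputs are only ever measured in $L^2$, and the $L^\lambda(-1,1)$ norm appears solely on the output, where it comes for free. The $L^\lambda$ norm of the \emph{inputs} is genuinely needed only in the companion phase-weighted estimate (Lemma \ref{lem:estimativakdv}), where the degenerate weight $\jap{\Phi}^{-\sigma}\sim|\xi\xi_1\xi_2|^{-\sigma}$ at low frequency is handled by Hardy--Littlewood--Sobolev through $\|v_2/|\cdot|^\sigma\|_{L^q_w(-1,1)}\lesssim\|v_2\|_{L^\lambda}$. You correctly identified why $\mathcal{H}^s$ is introduced for \eqref{kdv}, but attributed it to the wrong half of the argument.
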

\begin{proof}
	Notice that
	$$
	\frac{\partial \Phi}{\partial \xi_1}=\xi(\xi_2-\xi_1),\quad \frac{\jap{\xi}^{s}}{\jap{\xi_1}^s\jap{\xi_2}^s}\lesssim 1.
	$$
	The proof of the $L^\lambda$ bound is trivial:
	$$
	\left\|\int_{\xi_1+\xi_2=\xi} \xi v_1(\xi_1)v_2(\x_2)d\x_1 \right\|_{L^\lambda(-1,1)}\lesssim  \left\|\int v_1(\xi_1)v_2(\x_2)d\x_1\right\|_{L^{\infty}(-1,1)} \lesssim \|v_1\|_{L^2}\|v_2\|_{L^2}.
	$$
	\textbf{Case 1.} $|\xi_2-\xi_1|\gtrsim |\xi|$. Then
	\begin{align*}
	I^{\alpha, M}=\left|\int_{\substack{\x_1+\xi_2=\x\\|\Phi(\Xi)-\alpha|<M}} \xi\frac{\jap{\xi}^{s}}{\jap{\xi_1}^s\jap{\xi_2}^s} v_1v_2vdx_1dx \right| &\lesssim \sup_\xi\left(\int \mathbbm{1}_{|\Phi-\alpha|<M}\xi^2  d\xi_1\right)^{1/2}\prod_{j=1}^3\|v_j\|_{L^2} \\&\lesssim \sup_\xi\left(\int \mathbbm{1}_{|\Phi-\alpha|<M}\frac{|\xi|}{|\xi_2-\xi_1|} d\Phi\right)^{1/2}\prod_{j=1}^3\|v_j\|_{L^2}\\& \lesssim M^{1/2}\prod_{j=1}^3\|v_j\|_{L^2}.
	\end{align*}

\textbf{Case 2.} $|\xi_2-\xi_1|\ll |\xi|$. Then
$$
\xi_1,\xi_2\sim \frac{\xi}{2},\quad \Phi \sim \frac{3}{4}\xi^3
$$	
and
\begin{align*}
I^{\alpha, M}=&\left|\int_{\substack{\x_1+\xi_2=\x\\|\Phi(\Xi)-\alpha|<M}} \xi\frac{\jap{\xi}^{s}}{\jap{\xi_1}^s\jap{\xi_2}^s} v_1v_2vdx_1dx \right| \lesssim  \sup_{\xi_1} \left(\int \xi^2 \mathbbm{1}_{|\Phi-\alpha|<M} d\xi\right)^{1/2}\prod_{j=1}^3\|v_j\|_{L^2} \\\lesssim& \left(\int \xi^2\mathbbm{1}_{|3\xi^3/4-\alpha|<M}d\xi\right)^{1/2}\prod_{j=1}^3\|v_j\|_{L^2} \lesssim M^{1/2}\prod_{j=1}^3\|v_j\|_{L^2}.
\end{align*}
\end{proof}

\begin{lem}[Adapted phase weighted estimate]\label{lem:estimativakdv}
Under the assumptions of Corollary 2, for $u_1,u_2\in \mathcal{H}^s$,  one has
	\begin{equation}
	\|T_\sigma(u_1,u_2)\|_{\mathcal{H}^{s+\epsilon}}\lesssim \|u_1\|_{\mathcal{H}^s}\|u_2\|_{\mathcal{H}^s}.
	\end{equation}
\end{lem}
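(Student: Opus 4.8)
The plan is to prove the adapted bound $\|T_\sigma(u_1,u_2)\|_{\mathcal{H}^{s+\epsilon}} \lesssim \|u_1\|_{\mathcal{H}^s}\|u_2\|_{\mathcal{H}^s}$ by duality, estimating the integral $I_{\sigma,\epsilon}$ with $v_{3}:=v$, where I must control both the $H^{s+\epsilon}$ part and the auxiliary $L^\lambda(-1,1)$ part of the $\mathcal{H}^{s+\epsilon}$ norm. Since $m(\Xi)=\xi$ and $\Phi=3\xi\xi_1\xi_2$ for the (KdV), the weight I need to dominate is
\begin{equation}
\frac{\jap{\xi}^{s+\epsilon}\,|\xi|}{\jap{\xi_1}^s\jap{\xi_2}^s\jap{\Phi}^\sigma}.
\end{equation}
Following the general philosophy stated in the paper, I would split into cases according to the relative sizes of the frequencies: when both $|\xi_1|,|\xi_2|\gtrsim|\xi|$ the product $\jap{\xi_1}^s\jap{\xi_2}^s$ easily absorbs the $\jap{\xi}^{s+\epsilon}$ factor (at least for large $s$), so the phase need not intervene; the delicate regime is when one frequency, say $|\xi_2|$, is much smaller than $|\xi|$, forcing $\xi_1\sim\xi$, so that $\jap{\Phi}^\sigma \sim |\xi|^{2\sigma}\jap{\xi_2}^\sigma$ and the phase must be used to recover the derivatives.

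The central difficulty, and the reason the ordinary $H^s$ norm is insufficient here (as the paper explicitly flags before the lemma), is precisely the regime where $\xi_2$ is a small frequency, i.e. $|\xi_2|<1$. In that zone the factor $\jap{\xi_2}^{-s}$ gives no decay, and the phase contributes only $\jap{\Phi}^\sigma\sim |\xi|^{2\sigma}$ with no help from $\xi_2$; one is left needing to control $\int \jap{\xi}^{1+\epsilon-2\sigma}|\widehat{u}_2(\xi_2)|\,d\xi_2$ over $|\xi_2|<1$. Here I would invoke the $L^\lambda(-1,1)$ component of the $\mathcal{H}^s$ norm: by Hölder in $\xi_2$ on the unit interval, $\int_{|\xi_2|<1}|\widehat u_2|\,d\xi_2 \lesssim \|\widehat u_2\|_{L^\lambda(-1,1)}\lesssim\|u_2\|_{\mathcal{H}^s}$, which is exactly why the exotic norm was introduced. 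The remaining $\xi$- and $\xi_1$-integrations should then close using the constraints \eqref{eq:defirholambda}: the condition $\rho(2\sigma-1-\epsilon)>1$ guarantees that $\jap{\xi}^{1+\epsilon-2\sigma}$ lies in $L^\rho$, and a weak-Young or Hardy--Littlewood--Sobolev argument against $|\cdot|^{-\sigma}\in L^{1/\sigma}_w$ handles the convolution structure, with the balance $1-\sigma=1/\rho+1/\lambda$ ensuring the exponents match.

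Concretely, the steps I expect are: first, reduce to $|\xi|>1$ (the low-output piece being handled by a crude convolution/Young estimate as in the (mKdV) proof); second, dispose of the balanced case $|\xi_1|,|\xi_2|\gtrsim|\xi|$ where $\jap{\xi}^{s+\epsilon}\lesssim\jap{\xi_1}^s\jap{\xi_2}^s$ whenever $s\ge\epsilon$, after which Cauchy--Schwarz via \eqref{eq:sup} suffices; third, treat the unbalanced case $|\xi_2|\ll|\xi|$, splitting further into $|\xi_2|>1$ and $|\xi_2|<1$. In the subcase $|\xi_2|>1$ I would extract $\jap{\Phi}^{-\sigma}\sim|\xi|^{-2\sigma}\jap{\xi_2}^{-\sigma}$ and estimate by a Hölder triple $(\rho,\lambda',2)$ or a weighted Young inequality; in the subcase $|\xi_2|<1$ I would use the $L^\lambda(-1,1)$ norm as above. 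Finally I must verify the $L^\lambda(-1,1)$ bound on the output $T_\sigma(u_1,u_2)$ itself, i.e. that $\|\mathcal{F}T_\sigma\|_{L^\lambda(-1,1)}\lesssim\|u_1\|_{\mathcal{H}^s}\|u_2\|_{\mathcal{H}^s}$; on $|\xi|<1$ the phase weight $\jap{\Phi}^{-\sigma}$ is bounded and $m(\Xi)=\xi$ is harmless, so this should follow from a direct convolution estimate as in the analogous low-output computation for (mKdV). The main obstacle throughout is orchestrating the three constraints in \eqref{eq:defirholambda} so that every Hölder/Young exponent in the small-$\xi_2$ regime closes simultaneously, which is the sole reason for working in $\mathcal{H}^s$ rather than $H^s$.
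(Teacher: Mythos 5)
Your high-level plan coincides with the paper's (duality, a case split on the size of $\xi_2$, reserving the $L^\lambda(-1,1)$ component of $\mathcal{H}^s$ exactly for the small-$\xi_2$ zone, weak-Young/HLS tied to the exponents in \eqref{eq:defirholambda}, and the trivial $L^\lambda$ bound on the output), but two of your concrete reductions fail as written. The first is the balanced case. You propose that for $|\xi_1|,|\xi_2|\gtrsim|\xi|$ one may absorb $\jap{\xi}^{s+\epsilon}$ into $\jap{\xi_1}^s\jap{\xi_2}^s$ and then "the phase need not intervene", closing with Cauchy--Schwarz \eqref{eq:sup}. This forgets the derivative $m(\Xi)=\xi$: after the absorption the multiplier left inside \eqref{eq:sup} is $|\xi|$ on the unbounded region $\{|\xi_1|,|\xi-\xi_1|\gtrsim|\xi|\}$, so the sup is $\sup_\xi\bigl(\int_{|\xi_1|,|\xi-\xi_1|\gtrsim|\xi|}\xi^2\,d\xi_1\bigr)^{1/2}=+\infty$; even keeping the ratio $\jap{\xi}^{s+\epsilon}/(\jap{\xi_1}^s\jap{\xi_2}^s)$ inside the square, the sup is of order $|\xi|^{3/2+\epsilon-s}$, divergent for all $s<3/2+\epsilon$, hence for most of the range $s>0$ of Corollary \ref{teo:kdv}. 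For \eqref{kdv} the phase must be used in \emph{every} regime, precisely to pay for this derivative; this is why the paper's Case 2 ($|\xi_2|>1$, which contains your balanced case once one orders $|\xi_1|\ge|\xi_2|$) keeps $\jap{\Phi}\gtrsim\xi^2\jap{\xi_2}$, collapses the entire weight to $\jap{\xi_2}^{-\sigma}$ using $2\sigma>1+\epsilon$, and then closes with elementary H\"older and Cauchy--Schwarz, with no balanced/unbalanced distinction at all.

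The second failure is in the zone $|\xi_2|<1$, which is the heart of the lemma. Your reduction rests on the claim that there $\jap{\Phi}^\sigma\sim|\xi|^{2\sigma}$ "with no help from $\xi_2$", which is false: when $|\xi_2|\lesssim|\xi|^{-2}$ one has $\Phi=O(1)$, so no power of $\xi$ can be extracted uniformly. The only uniform bound is $\jap{\Phi}^{-\sigma}\le|3\xi\xi_1\xi_2|^{-\sigma}\lesssim|\xi|^{-2\sigma}|\xi_2|^{-\sigma}$, which necessarily carries the singular factor $|\xi_2|^{-\sigma}$; consequently the quantity to control is $\int_{|\xi_2|<1}|\xi_2|^{-\sigma}|v_2|\,d\xi_2$, not $\int_{|\xi_2|<1}|v_2|\,d\xi_2$, and your H\"older step producing $\|v_2\|_{L^1(-1,1)}\lesssim\|\hat u_2\|_{L^\lambda(-1,1)}$ does not suffice. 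The repair stays close to your intent: for fixed $\xi_2$, estimate $\int\jap{\xi}^{1+\epsilon-2\sigma}v(\xi)v_1(\xi-\xi_2)\,d\xi\le\|v\|_{L^2}\|v_1\|_{L^2}$ (using $2\sigma>1+\epsilon$, guaranteed by \eqref{eq:defirholambda}), and then apply H\"older in $\xi_2$ against $|\cdot|^{-\sigma}\in L^{\lambda'}(-1,1)$, which is finite precisely because $1/\lambda<1-\sigma$, again by \eqref{eq:defirholambda}. The paper instead applies Cauchy--Schwarz in $v_1$ first and then weak Young for the convolution of $\jap{\cdot}^{1+\epsilon-2\sigma}v\in L^r$, $1/r=1/2+1/\rho$, with $|\cdot|^{-\sigma}v_2\mathbbm{1}_{(-1,1)}\in L^q_w$, $1/q=\sigma+1/\lambda$; either route works, but your text superimposes the two mechanisms (pulling out $\|v_2\|_{L^\lambda}$ with no singular weight, and separately an HLS step against $|\cdot|^{-\sigma}\in L^{1/\sigma}_w$) in a way that is internally inconsistent, and neither is set up on the correct integrand.
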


\begin{proof}[Proof of Lemma \ref{lem:estimativakdv}]
Without loss of generality, assume $|\xi_1|\ge |\xi_2|$. Define
$$
\frac{1}{r}=\frac{1}{2}+\frac{1}{\rho}, \quad \frac{1}{q}=\sigma + \frac{1}{\lambda}.
$$
Then, if $|\xi_2|<1$, the Hardy-Littlewood-Sobolev inequality implies
\begin{align*}
I_{\sigma,\epsilon}&\lesssim \int \frac{\xi^{1+\epsilon-2\sigma}}{|\xi_2|^\sigma}vv_1v_2 \lesssim \|v_1\|_{L^2}\left\|\int \frac{\xi^{1+\epsilon-2\sigma}}{|\xi-\xi_1|^\sigma}v(\xi)v_2(\xi-\xi_1)d\xi \right\|_{L^2} \\&\lesssim \|v_1\|_{L^2}\left\|\frac{v}{\jap{\xi}^{2\sigma-1-\epsilon}}\right\|_{L^{r}}\left\|\frac{v_2}{|\cdot|^\sigma}\right\|_{L^{q}_w(-1,1)}\lesssim \|v_1\|_{L^2}\|v\|_{L^2} \left\|\frac{1}{\jap{\xi}^{2\sigma-1-\epsilon}} \right\|_{L^{\rho}}\left\|\frac{v_2}{|\cdot|^\sigma}\right\|_{L^{q}_w(-1,1)}\\&\lesssim  \|v_1\|_{L^2}\|v\|_{L^2}\|v_2\|_{L^\lambda}\lesssim \prod_{j=1}^3\|u_j\|_{\mathcal{H}^s}.
\end{align*}
On the other hand, if $|\xi_2|>1$,
\begin{align*}
I_{\sigma,\epsilon}\lesssim \int \frac{\xi^{1+\epsilon-2\sigma}}{\jap{\xi_2}^\sigma}vv_1v_2\lesssim \int \frac{1}{\jap{\xi_2}^\sigma}v_2 d\xi_2 \left\|\int vv_1 d\xi_1 \right\|_{L^{\infty}_{\xi_2}} \lesssim \left\|\frac{1}{\jap{\cdot}^\sigma}\right\|_{L^2}\prod_{j=1}^3\|v_j\|_{L^2}.
\end{align*}

It remains to prove the boundedness in $L^\lambda(-1,1)$:
\begin{align*}
\left\|T_\sigma(v_1,v_2)\right\|_{L^\lambda(-1,1)}\lesssim \left\|\int v_1v_2 \right\|_{L^\lambda(-1,1)}\lesssim \left\|\int v_1v_2 \right\|_{L^\infty(-1,1)}\lesssim \|v_1\|_{L^2}\|v_2\|_{L^2}.
\end{align*}
\end{proof}

\subsection{Nonlinear Smoothing for the (mZK)}
We write $\xi_j=(x_j,y_j)$. Define $\tilde{x}_j=x-x_j$, $\tilde{y}_j=y-y_j$. We have
$$
\Phi=\tilde{x}_1\tilde{x}_2\tilde{x}_3 + \tilde{y}_1\tilde{y}_2\tilde{y}_3
$$

\begin{lem}
	For any $s> 3/2$, the estimate \eqref{eq:frequencyrestricted} holds for the \eqref{mzk}.
\end{lem}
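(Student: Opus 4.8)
The plan is to argue by duality, reducing \eqref{eq:frequencyrestricted} to the pointwise bound \eqref{eq:integralam}, and then to apply the Cauchy--Schwarz inequality \eqref{eq:sup}. Writing $\mathcal{M}(\Xi)=m(\Xi)\jap{\xi}^{s}/(\jap{\xi_1}^s\jap{\xi_2}^s\jap{\xi_3}^s)$, with the multiplier $m(\Xi)=x+y$ produced by the derivative $(\partial_x+\partial_y)$ in the symmetrized \eqref{mzk}, this reduces the estimate to controlling
\[
\sup_{\xi}\left(\int_{\substack{\xi_1+\xi_2+\xi_3=\xi\\|\Phi-\alpha|<M}}\mathcal{M}(\Xi)^2\,d\xi_1\, d\xi_2\right)^{1/2}\lesssim \jap{\alpha}^{0^+}\jap{M}^{1/2^+},
\]
possibly after interchanging $\xi$ with one of the $\xi_j$. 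Ordering $|\xi_1|\ge|\xi_2|\ge|\xi_3|$ forces $|\xi_1|\gtrsim|\xi|$, and since $|m(\Xi)|=|x+y|\lesssim\jap{\xi}$ one may bound $\mathcal{M}^2\lesssim \jap{\xi}^{2}/(\jap{\xi_2}^{2s}\jap{\xi_3}^{2s})$, so that the only derivative loss left to absorb is the factor $\jap{\xi}^2$ coming from the multiplier.

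The core of the argument exploits the separable structure $\Phi=\tilde{x}_1\tilde{x}_2\tilde{x}_3+\tilde{y}_1\tilde{y}_2\tilde{y}_3$. Exactly as in the one-dimensional computation for \eqref{mkdv}, the four partial derivatives are products of frequency differences, $\partial_{x_1}\Phi=\tilde{x}_2(x_3-x_1)$, $\partial_{x_2}\Phi=\tilde{x}_1(x_3-x_2)$, and likewise in the $y$ variables. Because $\Phi$ is a sum of an $x$-part and a $y$-part, its gradient in $(x_1,y_1,x_2,y_2)$ vanishes only when both $(x_1,x_2)$ and $(y_1,y_2)$ sit at a one-dimensional stationary configuration; hence the full stationary set is the finite product of the stationary points found for \eqref{mkdv}. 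Away from a neighborhood of this finite set, at least one of the four partials is bounded below, and I would use that variable to perform the change of variables to $\Phi$: the restriction $|\Phi-\alpha|<M$ then contributes a factor $\lesssim M$ after integrating in $\Phi$, together with the Jacobian weight $1/|\partial\Phi/\partial(\cdot)|$. In the high--low--low regime, where $\xi_1\approx\xi$ dictates the worst derivative loss, the dominant Jacobian is comparable to $\jap{\xi}^2$ and exactly cancels the $\jap{\xi}^2$ inherited from $m$.

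The decisive step is then the three-dimensional transverse integration: after trading one variable for $\Phi$, one is left integrating $\jap{\xi_2}^{-2s}\jap{\xi_3}^{-2s}$ against the residual weights over the remaining three real variables. Here the two-dimensionality is felt acutely, since integrating a single factor $\jap{\cdot}^{-2s}$ over a full two-dimensional frequency already requires $s>1$, and the crude bookkeeping of the leftover $\jap{\xi}$-powers produced both by $m=x+y$ and by the Jacobian in the intermediate, partially stationary regimes (where only the $x$-part or only the $y$-part of $\Phi$ is nondegenerate) is what forces the non-sharp threshold $s>3/2$. I expect this convergence accounting, rather than any single delicate inequality, to be the main obstacle: the product structure of $\Phi$ lets only one direction at a time supply oscillation, so the decay of the weights must be strong enough to absorb the loss in all the transverse directions.

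It remains to treat the finitely many near-stationary neighborhoods. There, mimicking Case 2 of the \eqref{mkdv} lemma, I would replace $\Phi$ by its value at the relevant stationary configuration, which to leading order is a cubic function of $\xi$ alone; the constraint $|\Phi-\alpha|<M$ then localizes $\xi$ to a set of measure $\lesssim M\jap{\xi}^{-2}$, and applying Cauchy--Schwarz in the $\xi$ variable rather than in $(\xi_1,\xi_2)$ closes the estimate with the admissible factor $\jap{M}^{1/2^+}\jap{\alpha}^{0^+}$. As the authors emphasize in the remark following Corollary \ref{teo:mZK}, the resulting condition $s>3/2$ is deliberately crude and far from the local well-posedness threshold, so I would aim only for convergence in the transverse bookkeeping and not pursue optimality.
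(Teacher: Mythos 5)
Your overall frame---duality, the Cauchy--Schwarz bound \eqref{eq:sup}, and a change of variables to $\Phi$ wherever a partial derivative of $\Phi$ is large enough to cancel the multiplier loss---matches the paper's nondegenerate case (there, the paper works only with the $x$-partials, using $|x|\ge|y|$, and changes variables $x_1\to\Phi$ when $|\partial_{x_1}\Phi|\gtrsim|x|^2$). The genuine gap is in your treatment of the degenerate region. Your geometric premise, that degeneracy only occurs near the \emph{finite} product of the one-dimensional stationary configurations, so that elsewhere some partial derivative is usefully bounded below, is quantitatively false in two dimensions: what you need is a partial of size $\gtrsim\jap{\xi}^2$, and when $|y|\ll|x|\sim|\xi|$ \emph{every} $y$-partial satisfies $|\partial_{y_i}\Phi|=3|(y-y_j)(y_3-y_i)|\ll|x|^2$ as soon as all $|y_j|\ll |x|$, with no constraint whatsoever relating the $y$-frequencies to $y$-stationary points. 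Hence the region where all four partials are $\ll|\xi|^2$ is a product of a small $x$-neighborhood with an essentially unconstrained two-dimensional set of $y$-frequencies; it is not a finite union of small balls. On that region both of your tools fail: the Jacobian of any change of variables is $o(|\xi|^2)$, too small to absorb the $\jap{\xi}^2$ from $m$; and replacing $\Phi$ by a cubic function of $\xi$ alone is illegitimate, since the term $\tilde y_1\tilde y_2\tilde y_3$ genuinely varies there, by amounts that can be of order $o(|x|^3)\gg M$. (Even where such a replacement is valid, your measure count is one-dimensional: in $\real^2$ the set $\{\xi:|c(x^3+y^3)-\alpha|<M,\ |\xi|\sim R\}$ is a neighborhood of a curve, of measure $\sim MR^{-1}$, not $MR^{-2}$.)

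The paper closes exactly this region by a mechanism your sketch never identifies, and it is the actual source of the threshold $s>3/2$: when $|\partial_{x_1}\Phi|,|\partial_{x_2}\Phi|\ll|x|^2$ one has $|x|\sim|x_1|\sim|x_2|\sim|x_3|$, hence $\jap{\xi_j}\gtrsim\jap{x}\gtrsim\jap{\xi}$ for all $j$, so that
\begin{equation}
\frac{\jap{x}^{1+s}}{\jap{\xi_1}^{s}\jap{\xi_2}^{s}\jap{\xi_3}^{s}}\lesssim \frac{1}{\jap{\xi_1}^{s-1/2}\jap{\xi_2}^{s-1/2}},
\end{equation}
and then the restriction $|\Phi-\alpha|<M$ is simply discarded: one integrates $\jap{\xi_1}^{-(2s-1)}\jap{\xi_2}^{-(2s-1)}$ over $\xi_1,\xi_2\in\real^2\times\real^2$, which converges precisely when $2s-1>2$, i.e.\ $s>3/2$. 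No stationary-phase analysis and no localization in $\xi$ are used there at all. Attributing the threshold to unspecified ``convergence accounting'' in intermediate regimes, as you do, leaves the degenerate region untreated; your proposal as written does not close.
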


\begin{proof}
	We assume that
	$$
	|x|\ge |y|,\quad |\xi_1|\ge |\xi_2|\ge |\xi_3|.
	$$
	In this proof, we view the $y$-variables essentially as parameters and perform a similar analysis to that of \eqref{mkdv}. We recall that
	$$
	\frac{\partial \Phi}{\partial x_1}=3(x-x_2)(x_3-x_1), \frac{\partial \Phi}{\partial x_2}=3(x-x_1)(x_3-x_2).
	$$
	
	\textbf{Case 1.} $|\partial_{x_1}\Phi|\gtrsim |x|^2$ or $|\partial_{x_2}\Phi|\gtrsim |x|^2$. Assuming the first possibility, one may perform the change of variables $x_1\to \Phi$:
	\begin{align*}
	I^{\alpha,M}=&\int_{\substack{\x_1+\dots + \x_3=\x\\|\Phi(\Xi)-\alpha|<M}}\frac{\jap{x}^{1+s}}{\jap{\xi_1}^s\jap{\xi_2}^s\jap{\xi_3}^s }v_1v_2v_3v d\x_1d\xi_2 d\x \\\lesssim &\sup_{\xi} \left(\int \mathbbm{1}_{|\Phi-\alpha|<M}\frac{x^2}{\jap{\xi_2}^{2s}\jap{\xi_3}^{2s}}dx_1dx_3dy_2dy_3\right)^{1/2}\prod_{j=1}^4\|v_j\|_{L^2}\\ \lesssim& \sup_\xi \left(\int \mathbbm{1}_{|\Phi-\alpha|<M} \frac{1}{\jap{y_2}^{2s}\jap{x_3}^s\jap{y_3}^s}d\Phi dx_3dy_2dy_3\right)^{1/2}\prod_{j=1}^4\|v_j\|_{L^2}\\ \lesssim& M^{1/2}\prod_{j=1}^4\|v_j\|_{L^2}.
	\end{align*}
	\textbf{Case 2.} $|\partial_{x_1}\Phi|, |\partial_{x_2}\Phi|\ll |x|^2$. In this case,
	$$
	|x|\sim |x_1|\sim |x_2|\sim |x_3|, \quad \frac{\jap{x}^{1+s}}{\jap{\xi_1}^s\jap{\xi_2}^s\jap{\xi_3}^s}\lesssim \frac{1}{\jap{\xi_1}^{s-1/2}\jap{\xi_2}^{s-1/2}}
	$$
	and the restriction on $\Phi$ is not necessary:
	\begin{align*}
	I^{\alpha,M}=&\int_{\substack{\x_1+\dots + \x_3=\x\\|\Phi(\Xi)-\alpha|<M}}\frac{\jap{x}^{1+s}}{\jap{\xi_1}^s\jap{\xi_2}^s\jap{\xi_3}^s }v_1v_2v_3v d\x_1d\xi_2 d\x \\\lesssim &\sup_{\xi} \left(\int \frac{1}{\jap{\xi_1}^{2s-1}\jap{\xi_2}^{2s-1}} d\xi_1 d\xi_2\right)^{1/2}\prod_{j=1}^4\|v_j\|_{L^2} \lesssim \prod_{j=1}^4\|v_j\|_{L^2}.
	\end{align*}
	
\end{proof}

\begin{lem}
	Under the conditions of Corollary \ref{teo:mZK}, estimate \eqref{eq:basicaepsilon} holds.
\end{lem}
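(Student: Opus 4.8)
The plan is to pass to the dual formulation, so that \eqref{eq:basicaepsilon} becomes the pointwise bound \eqref{eq:integralse} for the quadrilinear form $I_{\sigma,\epsilon}$, and then to split according to how many input frequencies are comparable to the output $\xi=(x,y)$. By the $x\leftrightarrow y$ symmetry of the symmetrised \eqref{mzk} I would assume $|x|\ge|y|$, and by relabelling $|\xi_1|\ge|\xi_2|\ge|\xi_3|$, whence $|\xi_1|\gtrsim|\xi|$ and $|m(\Xi)|=|x+y|\lesssim\jap{\xi}$; the range $|\xi|\lesssim1$ is disposed of by Young's inequality as in the previous lemma. Thus the numerator of $I_{\sigma,\epsilon}$ is always $\lesssim\jap{\xi}^{s+1+\epsilon}$, and the whole question is whether this growth can be absorbed either by the Sobolev weights or by the phase.

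When two inputs are large, say $|\xi_2|\gtrsim|\xi|$, I would discard the harmless factor $\jap{\Phi}^{-\sigma}\le1$ and use $\jap{\xi_1}^{s}\jap{\xi_2}^{s}\gtrsim\jap{\xi}^{2s}$ to bound the symbol by $\lesssim\jap{\xi_1}^{-s'}\jap{\xi_2}^{-s'}\jap{\xi_3}^{-s}$ with $s'=(s-1-\epsilon)/2>0$, the positivity being exactly $s>1+\epsilon$. The resulting expression is $\int(f_1*f_2*f_3)\,v$ with $f_j=v_j\jap{\cdot}^{-a_j}$, which Young's inequality and Hölder (equivalently Hardy--Littlewood--Sobolev) in $\real^2$ bound by $\prod\|v_j\|_{L^2}$ as soon as $a_1+a_2+a_3=2s-1-\epsilon>2$, i.e. $\epsilon<2s-3$. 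This is the only step forcing the thresholds $\epsilon<2s-3$ and $s>3/2$, and it encodes nothing beyond two-dimensional integrability of $\jap{\cdot}^{-2s}$ against the extra weight $\jap{\xi}^{1+\epsilon}$; no oscillation is used.

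The delicate case is $|\xi_2|\ll|\xi|$, where a single large frequency $\xi_1\sim\xi$ carries all of $\xi$. Cancelling $\jap{\xi_1}^{s}\sim\jap{\xi}^{s}$ leaves the symbol $\jap{\xi}^{1+\epsilon}\jap{\Phi}^{-\sigma}$, and I would collapse the two small frequencies into $\Psi(\tilde{\xi}_1)=(v_2*v_3)(\tilde{\xi}_1)$, $\tilde{\xi}_1=\xi-\xi_1$, with $\|\Psi\|_{L^\infty}\lesssim\|v_2\|_{2}\|v_3\|_{2}$. Since here $\Phi\approx\tilde{x}_1 x^2+\tilde{y}_1 y^2$, one has $\partial_{\tilde{x}_1}\Phi\approx x^2\sim\jap{\xi}^2$, and the key computation is that the change of variables $(\tilde{x}_1,\tilde{y}_1)\mapsto(\tilde{x}_1 x^2,\tilde{y}_1 y^2)$ together with $\sigma<1$ gives $\|\jap{\Phi}^{-\sigma}\mathbbm{1}_{|\tilde{\xi}_1|<1}\|_{L^1_{\tilde{\xi}_1}}\sim\jap{\xi}^{-2\sigma}$. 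Inserting this into Schur's test for the bilinear kernel $\jap{\xi}^{1+\epsilon}\jap{\Phi}^{-\sigma}$ in the variables $(\xi,\xi_1)$ renders both marginals $\sup\jap{\xi}^{1+\epsilon-2\sigma}$, finite once $\sigma\ge(1+\epsilon)/2$, which is compatible with the constraint $\sigma<1$ precisely because $\epsilon<1$.

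I expect this last estimate to be the main obstacle. A Cauchy--Schwarz in $\tilde{\xi}_1$ would only recover $\|\jap{\Phi}^{-\sigma}\|_{L^1}^{1/2}\sim\jap{\xi}^{-\sigma}$ and demand the impossible $\sigma\ge1+\epsilon$: because the resonance $\{\Phi=0\}$ is a full hypersurface rather than the isolated points of \eqref{mkdv}, one cannot afford to lose a square root, and it is precisely the $L^1$-size of the phase kernel, used at the linear level through Schur's test, that supplies the full gain $\jap{\xi}^{-2\sigma}$ needed to defeat $\jap{\xi}^{1+\epsilon}$. The borderline configurations with $|\tilde{\xi}_1|\gtrsim1$ are then absorbed by combining a correspondingly larger phase gain with the $\jap{\xi_2}^{-s}\jap{\xi_3}^{-s}$ decay, whose integrability in $\real^2$ once more requires $s>3/2$, so that $\epsilon<2s-3$ remains the only binding restriction.
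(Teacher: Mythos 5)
Your low-frequency case, your ordering reductions, and your treatment of the regime with two large input frequencies are all sound: bounding the symbol by $\jap{\xi_1}^{-s'}\jap{\xi_2}^{-s'}\jap{\xi_3}^{-s}$ with $s'=(s-1-\epsilon)/2$ and closing with Young/H\"older under the condition $2s-1-\epsilon>2$ is exactly the content of the paper's Case 1 (and is, if anything, written more carefully there than in the paper). Your general instinct that the resonant region must be handled by an $L^1$-type (Schur) bound on a kernel rather than by Cauchy--Schwarz is also the right one. The gap is in the single-large-frequency case, and it is fatal as written: collapsing the two small frequencies into $(v_2*v_3)(\tilde{\xi}_1)$ requires the weight $\jap{\Phi}^{-\sigma}$ to be a function of $(\xi,\xi_1)$ alone, but $\Phi=\tilde{x}_1\tilde{x}_2\tilde{x}_3+\tilde{y}_1\tilde{y}_2\tilde{y}_3$ genuinely depends on $\xi_2,\xi_3$ individually. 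Your substitute $\Phi\approx\tilde{x}_1x^2+\tilde{y}_1y^2$ is false: in the region $|\xi_2|,|\xi_3|\ll|x|$ one only knows $|y_2|,|y_3|\ll|x|$, not $|y_2|,|y_3|\ll|y|$, so $\tilde{y}_2\tilde{y}_3=(y-y_2)(y-y_3)$ need not be comparable to $y^2$ (it can even have the opposite sign: take $y_2=2y$, $y_3=0$, so $\tilde{y}_2\tilde{y}_3=-y^2$). Since you need an upper bound on $\jap{\Phi}^{-\sigma}$, i.e.\ a lower bound $|\Phi|\gtrsim|\tilde{x}_1x^2+\tilde{y}_1y^2|$, the replacement fails precisely on a neighbourhood of the true resonance $\tilde{x}_1\tilde{x}_2\tilde{x}_3=-\tilde{y}_1\tilde{y}_2\tilde{y}_3$: e.g.\ with the choice above, $\Phi\approx\tilde{x}_1x^2-\tilde{y}_1y^2$ vanishes at $\tilde{x}_1=\tilde{y}_1y^2/x^2$ while your model phase equals $2\tilde{y}_1y^2$ there. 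This is not a boundary case one can wave away; the whole difficulty of the lemma is that the resonant set is a hypersurface whose location in $\tilde{\xi}_1$ moves with $\xi_2,\xi_3,y,y_1$, and your decoupling erases exactly that dependence. The closing remark about $|\tilde{\xi}_1|\gtrsim1$ configurations being ``absorbed by a correspondingly larger phase gain'' has the same problem: there is no lower bound on $|\Phi|$ there either.

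The paper's resolution shows what is needed instead: keep $v_2,v_3$ as honest $L^2$ functions with their $\jap{\xi_2}^{-s}\jap{\xi_3}^{-s}$ weights and run a \emph{two-level} Schur test --- an outer one in $(\xi_2,\xi_3)$ and, for fixed $(y,y_1)$ and fixed outer variable, an inner one in $(x,x_1)$ for the kernel $K_2(x,x_1)=x^{1+\epsilon}\jap{\Phi}^{-\sigma}$. The inner marginal is computed by the exact change of variables
\begin{equation}
x_1\mapsto \Psi=\tilde{x}_1+\frac{\tilde{y}_1\tilde{y}_2\tilde{y}_3}{\tilde{x}_2\tilde{x}_3},\qquad \left|\frac{\partial\Psi}{\partial x_1}+1\right|=\left|\frac{\tilde{y}_1\tilde{y}_2\tilde{y}_3}{\tilde{x}_2\tilde{x}_3^2}\right|\ll1,
\end{equation}
which carries the $y$-contribution along as a shift of the resonance rather than discarding it, and yields $\int_{|\Psi|<1}|\Psi|^{-\sigma}dx_1\lesssim1$ for $\sigma\in[(1+\epsilon)/2,1)$; the remaining $(y,y_1)$ integration is then closed by Young's inequality against $\jap{y-y_1-y_3}^{-s}\in L^1(\real)$, using $s>1$. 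If you want to salvage your write-up, this is the structure you must reproduce: the phase can only be ``integrated out'' one real variable at a time, with all other frequencies frozen, never after the small frequencies have been pre-convolved.
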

\begin{proof}
Once again, we make some simplifications:
$$
|x|\ge |y|,\quad |\xi_1|\ge |\xi_2|\ge |\xi_3|.
$$
\textbf{Case 1.} $|\xi_2|\gtrsim |x|$. Then
\begin{align*}
I_{\sigma,\epsilon}&\lesssim \int \frac{x^{1+\epsilon+s}}{\jap{\xi_1}^s\jap{\x_2}^s\jap{\x_3}^s }vv_1v_2v_3\lesssim \int \frac{1}{\jap{\xi_2}^{s-(1+\epsilon)/2}\jap{\xi_3}^{s-(1+\epsilon)/2} }vv_1v_2v_3\\&\lesssim \int \frac{1}{\jap{\xi_2}^{s-(1+\epsilon)/2}\jap{\xi_3}^{s-(1+\epsilon)/2} }v_2v_3d\xi_2d\xi_3\left\|\int vv_1d\xi_1\right\|_{L^\infty_{\xi_2,\xi_3}}\\&\lesssim \left\|\frac{1}{\jap{\cdot}^{s-(1+\epsilon)/2}}\right\|_{L^2}^2\prod_{j=1}^4 \|v_j\|_{L^2}.
\end{align*}
\textbf{Case 2.} $|\xi_2|\ll |x|$. In this region, we have

$$
|\tilde{y}_1|\ll |x|,\quad  |\tilde{y}_2|,|\tilde{y}_3|\lesssim |x|, \quad \tilde{x}_2\sim x,\ \tilde{x}_3\sim x.
$$

\textit{Case a).} $|\Phi|\gtrsim |x|^2$. Then, analogously to Case 1,
\begin{align*}
I_{\sigma,\epsilon}&\lesssim \int \frac{x^{1+\epsilon+s}}{\jap{\xi_1}^s\jap{\x_2}^s\jap{\x_3}^s\jap{\Phi}^\sigma }vv_1v_2v_3\lesssim \int \frac{x^{1+\epsilon-2\sigma}}{\jap{\xi_2}^{s}\jap{\xi_3}^{s} }vv_1v_2v_3\lesssim \left\|\frac{1}{\jap{\cdot}^{s}}\right\|_{L^2}^2\prod_{j=1}^4 \|v_j\|_{L^2}.
\end{align*}

\textit{Case b).} $|\Phi|\ll |x|^2$. In what follows, we recall that one may freely interchange the variables of integration $\xi\leftrightarrow \xi_1\leftrightarrow \xi_2 \leftrightarrow \xi_3$. In order to estimate the integral
$$
\int \frac{\jap{x}^{1+\epsilon}}{\jap{\xi_2}^s\jap{\xi_3}^s\jap{\Phi}^\sigma} vv_1v_2v_3 = \int \left(\int\frac{\jap{x}^{1+\epsilon}}{\jap{\xi_2}^s\jap{\xi_3}^s\jap{\Phi}^\sigma} vv_1 \right) v_2v_3d\xi_2d\xi_3 = \int K(\xi_2,\xi_3)v_2v_3d\xi_2d\xi_3,
$$
we apply Schur's test to $K$. For $\xi_2$ fixed, we want to see if the following integral is bounded:
$$
\int\frac{\jap{x}^{1+\epsilon}}{\jap{\xi_2}^s\jap{\Phi}^\sigma} vv_1d\xi_1d\xi \lesssim \int \frac{1}{\jap{y_2}^s}\left(\int \frac{x^{1+\epsilon}}{\jap{\Phi}^\sigma}vv_1dxdx_1\right)dydy_1=\int \frac{1}{\jap{y_2}^s}\left(\int K_2(x,x_1)vv_1dxdx_1\right)dydy_1.
$$
For $y,y_1$ fixed, we apply once again Schur's test to $K_2$: for $x$ fixed,
$$
\int_{|\Phi|<|x|^2} \frac{x^{1+\epsilon}}{\jap{\Phi}^\sigma} dx_1 \lesssim \int_{|\Psi|<1} \frac{1}{|\Psi|^\sigma}dx_1,\quad \Psi=x-x_1 + \frac{\tilde{y}_1\tilde{y}_2\tilde{y}_3}{\tilde{x}_2\tilde{x}_3}.
$$
Observe that
$$
\left|\frac{\partial (\Psi + x_1)}{\partial x_1}\right| =\left|\frac{\tilde{y}_1\tilde{y}_2\tilde{y}_3}{\tilde{x}_2^2\tilde{x}_3}\right| \ll 1
$$
and so one may perform the change of variables $x_1\to \Psi$:
$$
\int_{|\Psi|<1} \frac{1}{|\Psi|^\sigma}dx_1\lesssim\int_{|\Psi|<1} \frac{1}{|\Psi|^\sigma}d\Psi \lesssim 1.
$$
Since $x$ and $x_1$ play similar roles in the definition of $\Psi$, we conclude that the validity of the Schur test for $K_2$. Hence
\begin{align*}
\int \frac{1}{\jap{y_2}^s}\left(\int K_2(x,x_1)vv_1dxdx_1\right)dydy_1&\lesssim \int \frac{1}{\jap{y-y_1-y_3}^s}\|v(\cdot,y)\|_{L^2}\|v_1(\cdot,y_1)\|_{L^2}dydy_1\\&\lesssim \left\|\frac{1}{\jap{\cdot-y_3}^s}\right\|_{L^1}\|v_1\|_{L^2}\|v\|_{L^2}\lesssim \|v_1\|_{L^2}\|v\|_{L^2}.
\end{align*}
This proves that
$$
\int K(x_2,x_3)dx_2
$$
is uniformly bounded in $x_3$. Due to the symmetry of $K$, we now conclude the validity of Schur's test for $K$ and so
$$
I_{\sigma,\epsilon}\lesssim \prod_{j=1}^4\|v_j\|_{L^2}.
$$
\end{proof}

\subsection{Nonlinear Smoothing for (dNLS*)}

The case of the derivative nonlinear Schrödinger equation is slightly more complex than the previous examples for two reasons:
\begin{enumerate}
	\item We have to replace \eqref{eq:frequencyrestricted} with \eqref{eq:frequencyrestrictedgen}. As discussed in Remark \ref{gammabeta}, this poses no problem in the application of the INFR;
	\item The nonlinearity is the sum of two pure-power nonlinearities. At first, one would have to modify the INFR to this framework, taking into consideration that the normal form equation would now contain an infinite number of terms with mixed nonlinearities. Even though it should be feasible, we do not pursue this here. Instead, we follow the argument of \cite{mosincatyoon}: observe that, since $s>1/2$, $H^s(\real)$ is an algebra and so the $|u|^4u$ is bounded in $H^s$.  In the INFR procedure, after the integration by parts in time, one must replace $\tilde{u}_t$ by the two nonlinear terms. Consider the one with the quintic power. The boundedness of $|u|^4u$ in $H^s(\real)$ implies that the bounds \eqref{eq:basicaepsilon} and \eqref{eq:frequencyrestrictedgen} are directly applicable and no further integration by parts in time is necessary. By contrast, for the cubic nonlinearity, we lack a bound for $|u|^2\partial_x\bar{u}$ in $H^s$ and the INFR method must be iterated. Therefore, after the first step (where we gain the extra regularity - this does not come freely from the algebra property), the quintic term becomes irrelevant and we may handle the INFR development as if only the cubic power was present.
\end{enumerate}

Let us begin by analyzing the cubic term with loss of derivative. We have
$$
\Phi=\xi^2-\xi_1^2 + \xi_2^2 - \xi_3^2=2(\xi-\xi_1)(\xi-\xi_3)=2(\xi_2-\xi_1)(\xi_2-\xi_3), \quad m(\Xi)=\xi_2
$$

\begin{lem}
	For $s>1/2$ and $\epsilon<\min\{1,2s-1\}$, estimate \eqref{eq:basicaepsilon} holds for the cubic nonlinearity of \eqref{gdnls}.
\end{lem}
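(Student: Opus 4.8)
The plan is to argue by duality, so that (Bound$_{\sigma,\epsilon}$) reduces to the pointwise control of the dual multilinear form (eq:integralse), whose multiplier here is
$$
\mathcal{M}(\Xi)=\frac{\jap{\xi}^{s+\epsilon}\,|\xi_2|}{\jap{\xi_1}^s\jap{\xi_2}^s\jap{\xi_3}^s\jap{\Phi}^\sigma}.
$$
The only genuinely new feature compared with the (NLS) computation already carried out is the derivative symbol $m(\Xi)=\xi_2$. I would absorb it through $|\xi_2|\jap{\xi_2}^{-s}\lesssim\jap{\xi_2}^{1-s}$ (and $\lesssim 1$ when $|\xi_2|<1$), so that the derivative costs at most $1-s<1/2$ Sobolev derivatives, and I would exploit the two algebraic identities coming from the factorizations of $\Phi$, namely $\Phi=2\tilde\xi_1\tilde\xi_3$ with $\tilde\xi_1=\xi-\xi_1,\ \tilde\xi_3=\xi-\xi_3$, together with $\xi_2=\xi-\tilde\xi_1-\tilde\xi_3$, which ties the derivative frequency directly to the factors of the phase. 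The contribution of $|\xi|<1$ is disposed of by Young's inequality exactly as in the (mKdV) lemma, so I would assume $|\xi|>1$ throughout and take $\sigma$ close to $1$.

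Following the general philosophy, I would first treat the regime where one of the phase factors is small, say $|\xi_3|\ll|\xi|$ (the case $|\xi_1|\ll|\xi|$ being symmetric). Then $\tilde\xi_3\sim\xi$, so that $|\Phi|\sim|\xi||\tilde\xi_1|$, and the identity above yields $\xi_2\approx-\tilde\xi_1$, i.e. $|\xi_2|\sim|\tilde\xi_1|$. The decisive point is that the derivative frequency now coincides with the \emph{small} factor of the phase, so $\mathcal{M}$ collapses to a weight essentially of the form $\jap{\xi}^{\epsilon-\sigma}\jap{\tilde\xi_1}^{1-s}|\tilde\xi_1|^{-\sigma}$ (with an extra $\jap{\xi_1}^{-s}\sim\jap{\xi}^{-s}$ whenever $|\tilde\xi_1|\lesssim|\xi|$, and $|\Phi|\gtrsim|\xi|^2$ whenever $|\tilde\xi_1|\gtrsim|\xi|$). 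This is precisely the (NLS) Case~1 weight up to the harmless factor $\jap{\tilde\xi_1}^{1-s}$, and I would close it in the same way, pulling out one of $v_2,v_3$ in $L^\infty$ and using Young's and Hardy--Littlewood--Sobolev inequalities in $\tilde\xi_1$; integrability of $|\tilde\xi_1|^{-\sigma}\jap{\tilde\xi_1}^{1-s}$ at the origin and at infinity is guaranteed by $\sigma<1$ and $s>1/2$, while the prefactor $\jap{\xi}^{\epsilon-\sigma}$ is summable once $\sigma>\epsilon$. The remaining sub-configuration $|\xi_1|,|\xi_3|\ll|\xi|$ forces $|\xi_2|\sim|\xi|$ and $|\Phi|\sim|\xi|^2$, so the quadratic phase gain $\jap{\xi}^{-2\sigma}$ beats the loss $\jap{\xi}^{1+\epsilon}$ provided $2\sigma>1+\epsilon$, which is where the ceiling $\epsilon<1$ enters.

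It remains to treat $|\xi_1|,|\xi_3|\gtrsim|\xi|$, where the phase factors are no longer large but two input weights already clear the output, $\jap{\xi_1}^s\jap{\xi_3}^s\gtrsim\jap{\xi}^{2s}$. Here I would split on $|\xi_2|$. If $|\xi_2|\ll|\xi|$, the derivative costs only $\jap{\xi_2}^{1-s}\lesssim\jap{\xi}^{1-s}$ and the surviving multiplier is bounded by $\jap{\xi}^{1+\epsilon-2s}$, which is exactly the condition $\epsilon<2s-1$; the trilinear form is then closed by a Cauchy--Schwarz (or Schur) argument after freezing $v_2$, as in (NLS) Case~2. If instead $|\xi_2|\gtrsim|\xi|$, then all three inputs are comparable to $\xi$, the full weight $\jap{\xi_1}^s\jap{\xi_2}^s\jap{\xi_3}^s\gtrsim\jap{\xi}^{3s}$ is available, and the symbol again leaves the bounded multiplier $\jap{\xi}^{s+\epsilon+1-3s}=\jap{\xi}^{1+\epsilon-2s}$; on the nonresonant set $|\Phi|\gtrsim1$ I would integrate against $\jap{\Phi}^{-\sigma}$ after the change of variables $\xi_1\mapsto\Phi$ (licit since $\partial_{\xi_1}\Phi=-2\tilde\xi_3$), and on the near-resonant set $|\Phi|\lesssim1$ I would use that this set is compact in the relevant variable, so that Cauchy--Schwarz produces a finite measure, exactly as in the stationary-point analysis of the frequency-restricted lemmas.

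The main obstacle is the derivative loss $m(\Xi)=\xi_2$, which is what distinguishes this estimate from the (NLS) one and is responsible for shrinking the admissible range from $2s$ to $2s-1$. The whole argument turns on controlling $|\xi_2|$: in the phase regime the identity $\xi_2\approx-\tilde\xi_1$ lets the phase factor already present pay for the derivative for free, whereas in the balanced regime one must genuinely spend $1-s$ of Sobolev regularity against it, and the bookkeeping $s+\epsilon+1-2s\le0$ is precisely the sharp condition $\epsilon\le 2s-1$. The only other delicate point is to ensure that, wherever the surviving multiplier is merely bounded rather than decaying, the effective region of integration is compact --- which is why one must keep track of the near-resonant sets $\{|\Phi|\lesssim1\}$; apart from that, the proof is a routine combination of Cauchy--Schwarz, Young's and Hardy--Littlewood--Sobolev inequalities, in the spirit of the preceding examples.
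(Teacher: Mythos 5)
Your strategy (duality, case analysis on frequency sizes, Cauchy--Schwarz/Schur/pairing arguments) is reasonable in spirit, and your diagnosis that the symbol $m(\Xi)=\xi_2$ is what shrinks the admissible range to $\epsilon<2s-1$ is correct; note also that your decomposition is genuinely different from the paper's, which splits first on $\min\{|\xi_2-\xi_1|,|\xi_2-\xi_3|\}\lessgtr 1$ (so that afterwards $\jap{\Phi}\gtrsim\jap{\xi_2-\xi_1}\jap{\xi_2-\xi_3}$) and then on $|\xi_2|^2$ versus $\jap{\Phi}$, thereby playing the derivative symbol directly against the phase. However, as written, your treatment of the key regimes has genuine gaps.

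First, the pivotal algebraic claim in your first regime is false: from $\xi_2=\xi_3-\tilde\xi_1$, taking $\xi_1=\xi$ gives $\tilde\xi_1=0$ while $\xi_2=\xi_3$ is arbitrary (subject to $|\xi_3|\ll|\xi|$), so $|\xi_2|\sim|\tilde\xi_1|$ fails. The multiplier bound you build on it can be rescued, but only by also spending the $\jap{\xi_3}^{-s}$ weight, via $|\xi_2|\jap{\xi_2}^{-s}\jap{\xi_3}^{-s}\lesssim\jap{\tilde\xi_1}^{1-s}$ (which uses $s>1/2$), whereas you keep that weight in reserve for the $L^\infty$ pairing. Second, and more seriously, your closing integrability claim is wrong: $|\tilde\xi_1|^{-\sigma}\jap{\tilde\xi_1}^{1-s}$ behaves like $|\tilde\xi_1|^{1-s-\sigma}$ at infinity, which is integrable only if $s+\sigma>2$; since $\sigma<1$, this fails for every $s\le 1$, i.e.\ precisely in the most relevant range $1/2<s\le 1$. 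Repairing it requires trading the prefactor against the constraint, e.g.\ $\jap{\xi}^{\epsilon-\sigma}\mathbbm{1}_{|\tilde\xi_1|\lesssim|\xi|}\lesssim\jap{\tilde\xi_1}^{\epsilon-\sigma}$, after which integrability needs $s+2\sigma-\epsilon>2$, available for $\sigma$ near $1$ because $\epsilon<\min\{1,2s-1\}\le s$ --- an idea absent from your sketch. Third, the near-resonant set $\{|\Phi|\lesssim 1\}$ is not compact: it is a neighbourhood of the asymptotes of a hyperbola, of infinite measure, so Cauchy--Schwarz ``producing a finite measure'' fails there; what works (and is what the paper's Case 1 does) is the convolution/pairing trick after noting that one of $\xi_2-\xi_1$, $\xi_2-\xi_3$ lies in a bounded interval. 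Relatedly, in your balanced regime the change of variables $\xi_1\mapsto\Phi$ has Jacobian $2|\tilde\xi_3|$, which degenerates since $\xi_3=\xi$ is allowed there, so that step is not licit without further splitting; and a merely bounded multiplier never closes a trilinear form in $L^2$ by ``Cauchy--Schwarz after freezing $v_2$'' --- you need decay or localization, which in that sub-regime actually comes from the second factorization $\Phi=2(\xi_2-\xi_1)(\xi_2-\xi_3)$, giving $|\Phi|\gtrsim|\xi|^2$ when $|\xi_2|\ll|\xi|\lesssim|\xi_1|,|\xi_3|$, a fact your sketch does not use. None of these defects makes your decomposition hopeless, but each requires a concrete additional idea, so the proposal as it stands does not constitute a proof.
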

\begin{proof}

\textbf{Case 1.} $\min\{|\xi_2-\xi_1|,|\xi_2-\xi_3| \}\le 1$. Assume that $|\xi_2-\xi_1|\le1$. Then $\jap{\xi}\sim \jap{\xi_3}$ and so
$$
\frac{\xi_2\jap{\xi}^{s+\epsilon}}{\jap{\xi_1}^s\jap{\xi_2}^s\jap{\xi_3}^s}\lesssim \frac{\jap{\xi}^\epsilon}{\jap{\xi_1}^{2s-1}\jap{\Phi}^\sigma}.
$$
If $|\xi_1|\gtrsim|\xi|$, then $m\lesssim 1$ and, setting $\eta=\xi_2-\xi_1=\xi-\xi_3$,
\begin{align*}
I_{\sigma,\epsilon}\lesssim \int_{|\eta|<1}\left(\int v_1(\xi_2-\eta)v_2(\xi_2)d\xi_2\right)\left(\int v(\xi)v_3(\xi-\eta)d\xi\right)d\eta\lesssim \prod_{j=1}^4\|v_j\|_{L^2}.
\end{align*}
If $|\xi_1|\ll |\xi|$, then $\xi\sim \xi_3$ and $\Phi\sim \xi(\xi-\xi_3)$. Thus
\begin{align*}
I_{\sigma,\epsilon} \lesssim \int_{|\xi-\xi_3|<1} \frac{\xi^{\epsilon-\sigma}}{|\xi-\xi_3|^\sigma}vv_1v_2v_3d\xi_1d\xi_3d\xi \lesssim \left(\int_{|\xi-\xi_3|<1} \frac{1}{|\xi-\xi_3|^\sigma}vv_3d\xi_3d\xi\right)\|v_1\|_{L^2}\|v_2\|_{L^2} \lesssim \prod_{j=1}^4\|v_j\|_{L^2}.
\end{align*}
\vskip10pt
Thus we may now suppose from now on that $|\xi_2-\xi_1|, |\xi_2-\xi_3|>1$. Observe that this implies $\jap{\Phi}\gtrsim \jap{\xi_2-\xi_1}\jap{\xi_2-\xi_3}$.

\noindent\textbf{Case 2.} $|\xi_2|^2\lesssim \jap{\Phi}$. 
\textit{Case a).} $|\xi_2|\gtrsim |\xi|$. Then 
$$
\frac{\xi_2\jap{\xi}^{s+\epsilon}}{\jap{\xi_1}^s\jap{\xi_2}^s\jap{\xi_3}^s\jap{\Phi}^\sigma} \lesssim\frac{\xi_2\jap{\xi}^{\epsilon}}{\jap{\xi_1}^s\jap{\xi_3}^s\jap{\xi_2}^{2\sigma}} \lesssim \frac{1}{\jap{\xi_1}^s\jap{\xi_3}^s}
$$
and
\begin{align*}
I_{\sigma,\epsilon}\lesssim \sup_{\xi } \left(\int \frac{1}{\jap{\xi_1}^{2s }\jap{\xi_3}^{2s}}d\xi_1d\xi_3\right)^{1/2} \prod_{j=1}^4\|v_j\|_{L^2}.
\end{align*} 

\textit{Case b).} $|\xi_2|\ll |\xi|$. Assume that $|\xi_1|\gtrsim  |\xi|$. Then
$$
\frac{\xi_2\jap{\xi}^{s+\epsilon}}{\jap{\xi_1}^s\jap{\xi_2}^s\jap{\xi_3}^s} \lesssim \frac{\jap{\xi}^\epsilon}{\jap{\xi_2}^s\jap{\xi_3}^s\jap{\Phi}^{\sigma-1/2}}.
$$
If $|\xi_3|\gtrsim |\xi|$,
\begin{align*}
I_{\sigma,\epsilon}\lesssim \sup_{\xi} \left(\int \frac{1}{\jap{\xi_2}^{2s}\jap{\xi_3}^{2(s-\epsilon)}\jap{\xi_2-\xi_3}^{2\sigma-1}}d\xi_2 d\xi_3\right)^{1/2}\prod_{j=1}^4\|v_j\|_{L^2} \lesssim\prod_{j=1}^4\|v_j\|_{L^2}.
\end{align*}
If $|\xi_3|\ll |\xi|$, then
$$
\frac{\partial \Phi}{\partial \xi_1} = \xi-\xi_3 \sim \xi
$$
and
\begin{align*}
I_{\sigma,\epsilon}\lesssim \sup_\xi \left(\int \frac{\jap{\xi}^{2\epsilon} \xi_2^2}{\jap{\xi_2}^{2s}\jap{\Phi}^{2\sigma} }d\xi_1d\xi_2\right)^{1/2}\prod_{j=1}^4\|v_j\|_{L^2} \lesssim \sup_{\xi} \left(\int \frac{\jap{\xi}^{2\epsilon-1}}{\jap{\xi_2}^{2s-2}\jap{\Phi}^{2\sigma}}d\Phi d\xi_2\right)^{1/2}\prod_{j=1}^4\|v_j\|_{L^2}.
\end{align*}
Since $\jap{\Phi}\gtrsim \jap{\xi}, |\xi_2|^2$, if one chooses
$$
a>\max\{3/2-s, 0\}, b=2\epsilon-1,
$$
\begin{align*}
I_{\sigma,\epsilon}\lesssim \sup_{\xi} \left(\int \frac{1}{\jap{\xi_2}^{2s-2+2a}\jap{\Phi}^{2\sigma-a-b}}d\Phi d\xi_2\right)^{1/2}\prod_{j=1}^4\|v_j\|_{L^2} \lesssim\prod_{j=1}^4\|v_j\|_{L^2}.
\end{align*}

\textbf{Case 3.} $|\xi_2|^2\gg \jap{\Phi}$. Since $\jap{\Phi}\gtrsim \jap{\xi_2-\xi_1}\jap{\xi_2-\xi_3}$, we may suppose that $|\xi_2-\xi_1|\ll |\xi_2|$. Hence $|\xi_1|\sim |\xi_2|$.

\textit{Case a).} If $|\xi_1|\gg |\xi_3|$, then $|\xi|\ll |\xi_2|$ (otherwise $|\Phi|\gtrsim |\xi-\xi_3||\xi_2-\xi_3|\gtrsim|\xi_2|^2$). Either $|\xi_3|\gtrsim |\xi|$ and
\begin{align}\label{eq:dnls3a}
I_{\sigma, \epsilon}&\lesssim \sup_\xi \left(\int \frac{|\xi_2|^2\jap{\xi}^{2\epsilon} }{ \jap{\xi_1}^{2s}\jap{\xi_2}^{2s}\jap{\xi_2-\xi_1}^{2\sigma}\jap{\xi_2-\xi_3}^{2\sigma} }d\x_1 d\xi_2\right)^{1/2}\prod_{j=1}^4\|v_j\|_{L^2}\nonumber\\&\lesssim \sup_\xi \left(\int \frac{1}{ \jap{\xi_1}^{2s}\jap{\xi_2}^{2s + 2\sigma-2\epsilon-2}\jap{\xi_2-\xi_1}^{2\sigma} }d\x_1 d\xi_2\right)^{1/2}\prod_{j=1}^4\|v_j\|_{L^2}
\end{align}
or $|\xi_3|\ll |\xi|$ and
\begin{align*}
I_{\sigma, \epsilon}&\lesssim \sup_\xi \left(\int \frac{|\xi_2|^2\jap{\xi}^{2\epsilon} }{ \jap{\xi_2}^{2s}\jap{\xi_3}^{2s}\jap{\xi-\xi_3}^{2\sigma}\jap{\xi_2-\xi_3}^{2\sigma} }d\x_2 d\xi_3\right)^{1/2}\prod_{j=1}^4\|v_j\|_{L^2}\\&\lesssim \sup_\xi \left(\int \frac{1}{ \jap{\xi_3}^{2s}\jap{\xi_2}^{2s + 2\sigma-2}\jap{\xi_2-\xi_3}^{2\sigma} }d\x_1 d\xi_2\right)^{1/2}\prod_{j=1}^4\|v_j\|_{L^2}.
\end{align*}
\textit{Case b).} $|\xi_3|\gtrsim |\xi_1|$. If $|\xi_3|\gg |\xi_1|\sim |\xi_2|$, one proceeds as in \eqref{eq:dnls3a}. Otherwise, one has $|\xi_1|\sim |\xi_2|\sim |\xi_3|\sim |\xi|$ and
\begin{align*}
I_{\sigma, \epsilon}&\lesssim \sup_\xi \left(\int \frac{|\xi_2|^2\jap{\xi}^{2s+2\epsilon} }{ \jap{\xi_1}^{2s}\jap{\xi_2}^{2s}\jap{\xi_3}^{2s}\jap{\xi-\xi_3}^{2\sigma}\jap{\xi-\xi_1}^{2\sigma}}d\x_1 d\xi_3\right)^{1/2}\prod_{j=1}^4\|v_j\|_{L^2}\\&\lesssim \sup_\xi \left(\int \frac{1}{ \jap{\xi_1}^{2s-1-\epsilon}\jap{\xi_3}^{2s-1-\epsilon} \jap{\xi-\xi_3}^{2\sigma}\jap{\xi-\xi_1}^{2\sigma}}d\x_1 d\xi_3\right)^{1/2}\prod_{j=1}^4\|v_j\|_{L^2}.
\end{align*}

\end{proof}

\begin{lem}
	For $s>1/2$, estimate \eqref{eq:frequencyrestrictedgen} holds for the cubic nonlinearity of \eqref{gdnls}.
\end{lem}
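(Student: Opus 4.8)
The plan is to argue by duality, reducing \eqref{eq:frequencyrestrictedgen} for the cubic term to a pointwise bound on the quadrilinear form
\[
I^{\alpha,M}=\left|\int_{\substack{\xi_1+\xi_2+\xi_3=\xi\\|\Phi-\alpha|<M}}\frac{\xi_2\,\jap{\xi}^{s}}{\jap{\xi_1}^{s}\jap{\xi_2}^{s}\jap{\xi_3}^{s}}\,v_1v_2v_3\,v\,d\xi_1\,d\xi_3\,d\xi\right|,
\]
as in \eqref{eq:integralam}, now with symbol $m=\xi_2$ and at the $H^s$ level. The whole argument is driven by the factorisation $\Phi=2(\xi-\xi_1)(\xi-\xi_3)=2(\xi_2-\xi_1)(\xi_2-\xi_3)$, together with $\partial_{\xi_1}\Phi=-2(\xi-\xi_3)$, $\partial_{\xi_3}\Phi=-2(\xi-\xi_1)$ and $\partial_{\xi}\Phi=2(2\xi-\xi_1-\xi_3)$: whenever one of these factors is comparable to the top frequency I may change variables into $\Phi$, and the restriction $|\Phi-\alpha|<M$ then contributes a factor $\jap{M}^{1/2}$ through \eqref{eq:sup}.

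The structure is that of the (NLS) frequency-restricted estimate, the only genuinely new feature being the derivative loss carried by $m=\xi_2$. When at least two of the four frequencies $\xi,\xi_1,\xi_2,\xi_3$ are large and comparable, the weight $\jap{\xi}^{s}/(\jap{\xi_1}^{s}\jap{\xi_2}^{s}\jap{\xi_3}^{s})$ already dominates $|\xi_2|$ (this is where $s>1/2$ enters), and the estimate reduces to the (NLS) one: if $|\xi-\xi_1|$ or $|\xi-\xi_3|$ is large I change variables $\xi_3\to\Phi$, respectively $\xi_1\to\Phi$, and apply \eqref{eq:sup}, the range $1<|\xi-\xi_3|<\jap{\alpha}+M$ producing the harmless $\jap{\alpha}^{0^+}$; in the degenerate corner $|\xi-\xi_1|,|\xi-\xi_3|\lesssim1$ all four frequencies are forced to be comparable, the weight behaves like $\jap{\xi}^{1-2s}$ and decays, so a bare Cauchy--Schwarz closes the bound with no input from the phase.

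The delicate region---and the reason \eqref{eq:frequencyrestrictedgen} rather than \eqref{eq:frequencyrestricted} is required---is the one where the loss is \emph{not} absorbed by the weight, namely $|\xi|\sim|\xi_2|\gg|\xi_1|,|\xi_3|$: there $\jap{\xi_1}^{s}\jap{\xi_3}^{s}\approx1$, the weight is $\sim|\xi_2|\sim|\xi|$, a full derivative, while $\Phi\sim2\xi^2$ together with $|\Phi-\alpha|<M$ pins $|\xi|\sim\sqrt{\jap{\alpha}+M}$. I would handle it by the coarea change of variables $(\xi_1,\xi_3,\xi)\mapsto(\xi_1,\xi_3,\Phi)$, whose Jacobian $|\partial_{\xi}\Phi|^{-1}\sim(4|\xi|)^{-1}$ \emph{exactly} cancels the loss $|\xi_2|\sim|\xi|$, rewriting $I^{\alpha,M}$ as $\tfrac14\int_{|\Phi-\alpha|<M}v_1v_2v_3v\,d\xi_1\,d\xi_3\,d\Phi$. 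A Cauchy--Schwarz pairing the two low-frequency factors $v_1,v_3$ against the two high-frequency factors $v_2,v$ then splits this into a transverse factor, bounded by the length $2M$ of the $\Phi$-fibre and giving $\lesssim\jap{M}^{1/2}\|v_1\|_{L^2}\|v_3\|_{L^2}$, and a longitudinal factor in which undoing $\Phi\to\xi$ costs the Jacobian $|\partial_\xi\Phi|\sim|\xi|\lesssim\sqrt{\jap{\alpha}+M}$, giving $\lesssim\sup\{\jap{\alpha},\jap{M}\}^{1/4}\|v_2\|_{L^2}\|v\|_{L^2}$. Multiplying, this region obeys \eqref{eq:frequencyrestrictedgen} with $\gamma=1/4$ and $\beta=1/2$.

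I expect this last region to be the main obstacle, precisely because it dictates $\gamma=1/4$ and so must be reconciled with the constraints of Remark \ref{gammabeta}: $\gamma+\beta=3/4<1$ holds, and $\sigma+\gamma<1$ forces $\sigma<3/4$, which is compatible with the value $\sigma=(1+\epsilon)/2$ demanded by the companion bound \eqref{eq:basicaepsilon} exactly when $\epsilon<1/2$---this is the origin of the threshold in Corollary \ref{cor:dnls}. The remaining work is routine bookkeeping of the intermediate regimes (exactly one of $\xi_1,\xi_3$ comparable to $\xi$): there any extra smallness of a denominator is compensated by a correspondingly large factor $\partial_{\xi_1}\Phi$ or $\partial_{\xi_3}\Phi$ available for the change of variables, so these interpolate between the two extremes above without degrading the exponents.
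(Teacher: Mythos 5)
Your architecture parallels the paper's own proof: duality, the factorization of $\Phi$, changes of variables into $\Phi$ combined with Cauchy--Schwarz as in \eqref{eq:sup}, and the correct identification of the resonant region $|\xi|\sim|\xi_2|\gg|\xi_1|,|\xi_3|$ as the one that forces the generalized bound with $\gamma=1/4$. But your treatment of exactly that region has a genuine gap. You assert that there $\jap{\xi_1}^{s}\jap{\xi_3}^{s}\approx 1$ and discard these weights; this is false, since $|\xi_1|,|\xi_3|$ are only constrained to be $\ll|\xi|$, not $O(1)$. Once they are dropped, your bound for the ``longitudinal factor'' fails: after your Cauchy--Schwarz, that factor squared is
\begin{equation}
\int_{|\Phi-\alpha|<M} v_2^2(\xi_2)\,v^2(\xi)\,d\xi_1\,d\xi_3\,d\Phi \;\lesssim\; \sqrt{\jap{\alpha}+M}\int v_2^2(\xi_2)\,v^2(\xi)\,\mathbbm{1}_{|\Phi-\alpha|<M}\,d\xi_1\,d\xi_2\,d\xi,
\end{equation}
and since the integrand now depends only on $(\xi_2,\xi)$, the $\xi_1$-integration must be controlled by the indicator alone. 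For fixed $(\xi_2,\xi)$, however, $\Phi=2(\xi-\xi_1)(\xi_1-\xi_2)$ is a parabola in $\xi_1$ whose vertex $(\xi+\xi_2)/2\approx 0$ lies \emph{inside} the region (because $\xi_2\approx-\xi$ there), so for $\alpha\approx 2\xi^2$ the fibre has measure $\sim\sqrt{M}$, not $M/|\xi|$. This costs an extra $\jap{M}^{1/4}$, so the region actually gives $\gamma=1/4$, $\beta=3/4$, hence $\gamma+\beta=1$, violating the constraint $\gamma+\beta<1$ of Remark \ref{gammabeta}: the infinite normal form reduction no longer closes. The failure is not an artifact of the bookkeeping: taking $v_1,v_3$ indicators of $[-\sqrt M/10,\sqrt M/10]$, $v,v_2$ indicators of intervals of length $\sim\sqrt M$ around $\pm R$, $\alpha=2R^2$, $M=R^{3/2}$, the weight-free form is of size $M\prod_j\|v_j\|_{L^2}$, which beats your claimed $\jap{M}^{1/2}\sup\{\jap{\alpha},\jap{M}\}^{1/4}\prod_j\|v_j\|_{L^2}\sim M^{5/6}\prod_j\|v_j\|_{L^2}$.

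The repair is precisely where $s>1/2$ must be used in this region (you invoke it only elsewhere): keep $\jap{\xi_1}^{-s}\jap{\xi_3}^{-s}$, as the paper does. Via \eqref{eq:sup} one bounds $\sup_\xi\bigl(\int \xi_2^2\,\jap{\xi_1}^{-2s}\jap{\xi_3}^{-2s}\,\mathbbm{1}_{|\Phi-\alpha|<M}\,d\xi_1\,d\xi_3\bigr)^{1/2}$; the change of variables $\xi_3\to\Phi$ has Jacobian $|\partial_{\xi_3}\Phi|^{-1}\sim|\xi|^{-1}$, which turns $\xi_2^2/|\xi|$ into $|\Phi|^{1/2}$, the $\xi_1$-integral converges because $2s>1$, and only the $\Phi$-direction is left to the indicator, giving
\begin{equation}
\Bigl(\int |\Phi|^{1/2}\,\mathbbm{1}_{|\Phi-\alpha|<M}\,d\Phi\Bigr)^{1/2}\lesssim \sup\{\jap{\alpha}^{1/4},\jap{M}^{1/4}\}\,\jap{M}^{1/2},
\end{equation}
i.e.\ $\gamma=1/4$, $\beta=1/2$, which is admissible. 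A secondary looseness: your heuristic that two large comparable frequencies make the weight dominate $|\xi_2|$ is also not correct --- for $|\xi|\sim|\xi_1|\sim|\xi_2|\gg|\xi_3|$ the weight is $\sim|\xi_2|^{1-s}/\jap{\xi_3}^{s}$, unbounded for $s<1$; those regimes still close, but only because $\partial_{\xi_1}\Phi\sim\xi_2$ there (the paper's intermediate case), not because the symbol is bounded.
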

\begin{proof}
	
If $|\xi_1|\gtrsim |\xi|$ and $|\xi_3|\gtrsim |\xi_2|$, then
$$
\frac{\xi_2 \jap{\xi}^s}{\jap{\xi_1}^s\jap{\x_2}^s\jap{\x_3}^s}\lesssim 1
$$
and the estimate follows from Lemma 6. On the other hand, if $|\xi_1|\gtrsim |\xi|$ and $|\xi_2|\gg |\xi_3|$, 
$$
\frac{\partial \Phi}{\partial \xi_1} = \frac{\partial ((\xi_2-\xi_1)(\xi_2-\xi_3))}{\partial \xi_1}\sim \xi_2
$$
and
\begin{align*}
I^{\alpha, M} &\lesssim \sup_\xi \left(\int \frac{\xi_2^2}{\jap{\xi_2}^{2s}\jap{\xi_3}^{2s}  }\mathbbm{1}_{|\Phi-\alpha|<M} d\xi_1d\xi_3\right)^{1/2}\prod_{j=1}^4\|v_j\|_{L^2} \\&\lesssim \sup_\xi \left(\int \frac{1}{\jap{\xi_3}^{2s}  }\mathbbm{1}_{|\Phi-\alpha|<M} d\Phi d\xi_3\right)^{1/2}\lesssim M^{1/2}\prod_{j=1}^4\|v_j\|_{L^2}.
\end{align*}

By symmetry in $\xi_1, \xi_3$, the only remaining case is when $|\xi_1|, |\xi_3|\ll |\xi|$, which means that $\xi_2\sim \xi$. In this situation, since
$$
\frac{\partial \Phi}{\partial \xi_3} \sim \xi, \Phi\sim \xi^2, 
$$
one has
\begin{align*}
I^{\alpha, M}&\lesssim \sup_\xi \left(\int \frac{\xi_2^2}{\jap{\xi_1}^{2s}\jap{\xi_3}^{2s}}\mathbbm{1}_{|\Phi-\alpha|<M}d\xi_1d\xi_3\right)^{1/2}\prod_{j=1}^4\|v_j\|_{L^2}\\&\lesssim \sup_\xi \left(\int \frac{|\Phi|^{1/2}}{\jap{\xi_1}^{2s}}\mathbbm{1}_{|\Phi-\alpha|<M}d\xi_1d\Phi\right)^{1/2}\prod_{j=1}^4\|v_j\|_{L^2} \\&\lesssim \sup_\xi \left(\int |\Phi|^{1/2} \mathbbm{1}_{|\Phi-\alpha|<M}d\Phi\right)^{1/2}\prod_{j=1}^4\|v_j\|_{L^2} \lesssim \sup\{\jap{\alpha}^{1/4},\jap{M}^{1/4}\}\jap{M}^{1/2}\prod_{j=1}^4\|v_j\|_{L^2}.
\end{align*}

\end{proof}

Now consider the quintic term in \eqref{gdnls}. Since $|u|^4u$ is bounded in $H^s$, the estimate \eqref{eq:frequencyrestrictedgen} is straightforward.

\begin{lem}
	Under the assumptions of Corollary \ref{cor:dnls}, the estimate \eqref{eq:basicaepsilon} holds for the quintic term of \eqref{gdnls}.
\end{lem}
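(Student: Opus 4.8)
The plan is to argue by duality, exactly as in \eqref{eq:integralse}. For the quintic nonlinearity $|w|^4w$ one has $m(\Xi)\equiv1$ and, in the symmetric formulation, $\Phi=\xi^2-\xi_1^2-\xi_2^2-\xi_3^2+\xi_4^2+\xi_5^2$, the signs in front of the $\xi_j^2$ being dictated by which three factors are unconjugated; thus it suffices to bound
$$I_{\sigma,\epsilon}=\left|\int_{\xi_1+\dots+\xi_5=\xi}\frac{\jap{\xi}^{s+\epsilon}}{\jap{\xi_1}^s\cdots\jap{\xi_5}^s\jap{\Phi}^\sigma}v_1\cdots v_5\,v\,d\xi_1\cdots d\xi_4\,d\xi\right|$$
by $\prod_{j=1}^{5}\|v_j\|_{L^2}\|v\|_{L^2}$. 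I order $|\xi_1|\ge\dots\ge|\xi_5|$, so that $|\xi_1|\gtrsim|\xi|$, and fix $\sigma\in(1/2,1)$. Following the general philosophy stated above, the key dichotomy is whether two input frequencies are comparable to $\xi$ or only $\xi_1$ is.

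First I treat the single-high regime $|\xi_1|\gtrsim|\xi|\gg|\xi_2|\ge\dots\ge|\xi_5|$, where $\xi_1\sim\xi$ and the multiplier reduces to $\jap{\xi}^{\epsilon}\jap{\xi_2}^{-s}\cdots\jap{\xi_5}^{-s}\jap{\Phi}^{-\sigma}$. Here $\partial_{\xi_2}\Phi\sim\xi_1\sim\xi$, so I change variables $\xi_2\to\Phi$; this produces a factor $|\xi|^{-1}$, while $\int\jap{\Phi}^{-2\sigma}d\Phi<\infty$ since $\sigma>1/2$, and the surviving small-frequency weights are integrable because $2s>1$. Inserting the resulting multiplier into the Cauchy--Schwarz bound \eqref{eq:sup} yields $\sup_\xi\jap{\xi}^{2\epsilon-1}\lesssim1$, using $\epsilon<1/2$, and the estimate closes. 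No stationary point of $\Phi$ is met in this region, so this case is routine.

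Next I treat the high-high regime $|\xi_2|\gtrsim|\xi|$. A short computation shows that then $\xi_1\sim\xi_2$, so that $\jap{\xi}^{s+\epsilon}\lesssim\jap{\xi_1}^{s}\jap{\xi_2}^{\epsilon}$ and the multiplier is controlled by $\jap{\xi_2}^{\epsilon-s}\jap{\xi_3}^{-s}\jap{\xi_4}^{-s}\jap{\xi_5}^{-s}\jap{\Phi}^{-\sigma}$. When $s$ is large enough that $2(s-\epsilon)>1$ the phase is superfluous and \eqref{eq:sup} applies directly, since then every surviving weight is square-integrable; this is exactly the ``at least for large $s$'' case of the philosophy above. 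When $\tfrac12<s<1$ and $\epsilon$ is close to $2s-1$, the tail integral in $\xi_2$ no longer converges and the phase must intervene: I use the factorized lower bound $\jap{\Phi}\gtrsim\jap{\xi_1-\xi_2}\jap{\xi_1-\xi_3}$, valid away from the stationary set, to distribute $\jap{\Phi}^{-\sigma}$ among the difference variables, and then apportion the weight precisely as in the treatment of Case 3 of the cubic nonlinearity above, where the same nearly-equal high-high configuration was the bottleneck. The constraint $\epsilon<2s-1$ is exactly what makes this apportioning close.

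The main obstacle is this last configuration: the near-resonant high-high interaction with two nearly equal large frequencies. There the surplus weight is insufficient and one must extract decay from the phase, but, unlike the cubic case, $\Phi$ does not factor into two linear factors, so the correct factorized lower bound and the correct splitting of $\jap{\Phi}^{-\sigma}$ among the several difference variables have to be identified separately for each conjugation pattern and each sub-configuration of the two large frequencies. Two structural features keep the bookkeeping finite: the nonlinearity carries no derivative ($m\equiv1$), so the only regularity to be recovered is the gain $\epsilon$ itself, and $s>1/2$ makes $H^s$ an algebra, so in every configuration possessing surplus weight the bound degenerates to a convolution/\eqref{eq:sup} argument, leaving only the genuinely resonant pieces to be handled by the phase.
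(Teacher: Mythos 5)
There is a genuine gap, and it sits exactly where the difficulty of the quintic term lies. Your high-high sub-case with $2(s-\epsilon)\le 1$ rests on the ``factorized lower bound'' $\jap{\Phi}\gtrsim\jap{\xi_1-\xi_2}\jap{\xi_1-\xi_3}$, imported from Case 3 of the cubic term. No such bound exists for the quintic phase: unlike the cubic phase, $\Phi=\xi^2-\xi_1^2+\xi_2^2-\xi_3^2+\xi_4^2-\xi_5^2$ does not factor into linear forms, and its zero set contains configurations in your high-high regime with large difference variables. Concretely (paper's sign convention, constraint $\xi=\xi_1-\xi_2+\xi_3-\xi_4+\xi_5$), the point $(\xi_1,\dots,\xi_5)=(N,-N,\xi,N,-N)$ satisfies the constraint, has $\Phi=0$ and, for $|\xi|\ll N$, has four frequencies of size $N$; after any relabeling by decreasing magnitude, the three largest frequencies contain both a $+N$ and a $-N$, so at least one of the differences $\xi_1-\xi_2$, $\xi_1-\xi_3$ equals $\pm 2N$, and the claimed product bound would give $1=\jap{\Phi}\gtrsim N$, which is false. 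Hence $\jap{\Phi}^{-\sigma}$ cannot be ``distributed among the difference variables'' for any conjugation pattern, and the cubic Case 3 scheme cannot be ported. This matters quantitatively: your convergent sub-case only covers $\epsilon<s-\frac{1}{2}$, which is strictly smaller than $\min\{2s-1,\frac{1}{2}\}$ for every $s\in(\frac{1}{2},1)$, so the broken sub-case carries the whole range claimed in Corollary \ref{cor:dnls}; your final paragraph describes a classification program, not a proof, and the program's main tool is false. The paper's way around this (and the reason it announces that the lack of factorization caps the gain at $\epsilon\le 1/2$) is to split the case $|\xi_2|\gtrsim|\xi|$ on the size of the \emph{smallest} frequency: if $|\xi_5|\gtrsim|\xi_2|$, all five weights are large and $\jap{\xi}^{2\epsilon}$ is absorbed into $\jap{\xi_2}^{\epsilon}\jap{\xi_5}^{\epsilon}$ (this is where $\epsilon<2s-1$ enters, and no phase is needed); if $|\xi_5|\ll|\xi_2|$, then $|\partial_{\xi_2}\Phi|=2|\xi_2-\xi_5|\sim|\xi_2|\gtrsim|\xi|$ and a change of variables $\xi_2\mapsto\Phi$ gives the factor $\jap{\xi}^{2\epsilon-1}$ (this is where $\epsilon\le 1/2$ enters).

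Your single-high regime also contains an error, though a fixable one. With $\xi_5$ eliminated by the constraint (as your measure $d\xi_1\cdots d\xi_4$ dictates), every derivative of the quintic Schr\"odinger phase is a difference of input frequencies, $|\partial_{\xi_j}\Phi|=2|\xi_j-\xi_5|$; it is never comparable to $\xi_1$ for free. In the regime $|\xi_2|,\dots,|\xi_5|\ll|\xi|$ one has $|\partial_{\xi_2}\Phi|=2|\xi_2-\xi_5|\ll|\xi|$, so your change of variables $\xi_2\to\Phi$ yields no factor $|\xi|^{-1}$ at all. The correct move, which is the paper's Case 2, is to change variables in the large frequency: there $|\partial_{\xi_1}\Phi|=2|\xi_1-\xi_5|\gtrsim|\xi|$, the Jacobian produces $\jap{\xi}^{2\epsilon-1}$, and the surviving weights $\jap{\xi_2}^{-2s}\jap{\xi_3}^{-2s}\jap{\xi_4}^{-2s}\jap{\Phi}^{-2\sigma}$ are integrable precisely because $2s>1$ and $2\sigma>1$.
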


\begin{proof}
	
Here,
$$
\Phi = \xi^2- \xi_1^2 + \xi_2^2 - \xi_3^2 + \xi_4^2 - \xi_5^2.
$$
The lack of a nice algebraic factorization of $\Phi$ forces us to apply more robust arguments, which do not use the specific structure of the zero set of $\Phi$. Consequently, the total smoothing effect will be reduced to $\epsilon \le 1/2$.

We order the frequencies $|\xi_1|\ge |\xi_2| \ge \dots \ge |\xi_5|$. In particular, $|\xi_1|\gtrsim |\xi|$. We use the estimate
$$
I_{\sigma,\epsilon} \lesssim \sup_\xi \left( \int \frac{\jap{\xi}^{2\epsilon}}{\jap{\xi_2}^{2s}\dots \jap{\xi_5}^{2s}\jap{\Phi}^{2\sigma}  }d\xi_1 \dots d\xi_4 \right)^{1/2}\prod_{j=1}^6\|v_j\|_{L^2}.
$$
\textbf{Case 1.} $|\xi_2|\gtrsim |\xi|$. Either $|\xi_5|\gtrsim |\xi_2|$ and
\begin{align*}
I_{\sigma,\epsilon} \lesssim \sup_\xi \left( \int \frac{1}{\jap{\xi_2}^{2s-\epsilon}\jap{\xi_3}^{2s}\jap{\xi_4}^{2s} \jap{\xi_5}^{2s-\epsilon}  }d\xi_2 \dots d\xi_5 \right)^{1/2}\prod_{j=1}^6\|v_j\|_{L^2}
\end{align*}
or
$$
\left|\frac{\partial \Phi}{\partial \xi_2}\right| = 2|\xi_2-\xi_5| \gtrsim |\xi|
$$
and, using the change of variables $\xi_2 \mapsto \Phi$,
\begin{align*}
I_{\sigma,\epsilon} \lesssim \sup_\xi \left( \int \frac{\jap{\xi}^{2\epsilon-1}}{\jap{\xi_3}^{2s}\jap{\xi_4}^{2s} \jap{\xi_5}^{2s} \jap{\Phi}^{2\sigma }} d\Phi d\xi_3 \dots d\xi_5 \right)^{1/2}\prod_{j=1}^6\|v_j\|_{L^2}.
\end{align*}
\textbf{Case 2.} $|\xi_2|\ll |\xi|$. Since
$$
\left|\frac{\partial \Phi}{\partial \xi_1}\right| = 2|\xi_1-\xi_5| \gtrsim |\xi|,
$$
the change of variables $\xi_1\mapsto \Phi$ yields
\begin{align*}
I_{\sigma,\epsilon} \lesssim \sup_\xi \left( \int \frac{\jap{\xi}^{2\epsilon-1}}{\jap{\xi_2}^{2s}\jap{\xi_3}^{2s}\jap{\xi_4}^{2s}  \jap{\Phi}^{2\sigma }} d\Phi d\xi_2 \dots d\xi_4 \right)^{1/2}\prod_{j=1}^6\|v_j\|_{L^2}.
\end{align*}

\end{proof}

\section{Acknowledgements}
The authors wish to thank Felipe Linares for fruitful discussions and comments.
S. Correia was partially supported by Funda\c{c}\~ao para a Ci\^encia e Tecnologia, through the grant UID/MAT/04561/2019. J. Drumond Silva was partially supported by Funda\c{c}\~ao para a Ci\^encia e Tecnologia, through the grant UID/MAT/04459/2019.

\appendix
\section{Nonlinear interpolation and persistence properties for dispersive equations}

\begin{prop}[\cite{tartar}]\label{prop:tartar}
	Given Banach spaces $A_0\subset A_1$ and $B_0\subset B_1$ and a mapping $K:A_1\to B_1$, suppose that
$$
\|Ku-Kv\|_{B_1}\le f(\|u\|_{A_1}, \|v\|_{A_1})\|u-v\|_{A_1}
$$
and
$$
\|Ku\|_{B_0}\le g(\|u\|_{A_1})\|u\|_{A_0},
$$
for some continuous $f,g$ with values in $\real^+$. Then, for any $0<\theta<1$ and $1\le p\le +\infty$,
$$
\|Ku\|_{(B_0,B_1)_{\theta, p}} \le h(\|u\|_{A_1})\|u\|_{(A_0,A_1)_{\theta,p}}\quad \mbox{for all }u\in (A_0,A_1)_{\theta,p},
$$ 
where $h(t)=g(2t)^{1-\theta}f(t,2t)^\theta$.
\end{prop}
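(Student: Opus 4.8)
The plan is to use the $K$-method description of the real interpolation spaces. For a compatible couple $(X_0,X_1)$ recall the $K$-functional
\[
\mathcal{K}(\tau,w;X_0,X_1)=\inf_{w=w_0+w_1}\left(\|w_0\|_{X_0}+\tau\|w_1\|_{X_1}\right),\qquad \tau>0,
\]
with $\|w\|_{(X_0,X_1)_{\theta,p}}=\left(\int_0^\infty(\tau^{-\theta}\mathcal{K}(\tau,w))^p\,\tfrac{d\tau}{\tau}\right)^{1/p}$ (and the obvious supremum when $p=\infty$). Since $A_0\subset A_1$, every $u\in A_1$ is an admissible element of the couple. I may also assume, without loss of generality, that $f$ and $g$ are nondecreasing in each variable, replacing them otherwise by their nondecreasing majorants, which does not alter the statement.

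First I would fix $u\in(A_0,A_1)_{\theta,p}$ and, for each $\tau>0$, choose a near-optimal decomposition $u=a_0(\tau)+a_1(\tau)$ with $a_0(\tau)\in A_0$, $a_1(\tau)\in A_1$ such that
\[
\|a_0(\tau)\|_{A_0}+\tau\|a_1(\tau)\|_{A_1}\le 2\,\mathcal{K}(\tau,u;A_0,A_1),\qquad \|a_1(\tau)\|_{A_1}\le\|u\|_{A_1}.
\]
The second constraint costs nothing: whenever a competitor has $\|a_1\|_{A_1}>\|u\|_{A_1}$, the trivial splitting $(0,u)$ is at least as cheap, so the infimum is unchanged under this restriction. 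Its purpose is that it forces $\|a_0(\tau)\|_{A_1}\le\|u\|_{A_1}+\|a_1(\tau)\|_{A_1}\le 2\|u\|_{A_1}$, so that the arguments later fed into $f$ and $g$ are controlled uniformly in $\tau$ by $\|u\|_{A_1}$ alone.

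Next I would bound the $K$-functional of $Ku$ for the couple $(B_0,B_1)$ by testing it against the decomposition $Ku=Ka_0(\tau)+\bigl(Ku-Ka_0(\tau)\bigr)$, noting that $Ka_0(\tau)\in B_0$ and $Ku-Ka_0(\tau)\in B_1$. The boundedness hypothesis into $B_0$ and the Lipschitz hypothesis into $B_1$, combined with the estimates above, give
\[
\|Ka_0(\tau)\|_{B_0}\le g(\|a_0(\tau)\|_{A_1})\|a_0(\tau)\|_{A_0}\le 2\,g(2\|u\|_{A_1})\,\mathcal{K}(\tau,u),
\]
\[
\|Ku-Ka_0(\tau)\|_{B_1}\le f(\|u\|_{A_1},\|a_0(\tau)\|_{A_1})\,\|a_1(\tau)\|_{A_1}\le 2\,f(\|u\|_{A_1},2\|u\|_{A_1})\,\frac{\mathcal{K}(\tau,u)}{\tau}.
\]
Writing $G=g(2\|u\|_{A_1})$, $F=f(\|u\|_{A_1},2\|u\|_{A_1})$ and denoting the $(B_0,B_1)$-parameter by $s$, this yields, for every $\tau,s>0$,
\[
\mathcal{K}(s,Ku;B_0,B_1)\le\|Ka_0(\tau)\|_{B_0}+s\,\|Ku-Ka_0(\tau)\|_{B_1}\le 2\,\mathcal{K}(\tau,u)\Bigl(G+\tfrac{s}{\tau}F\Bigr).
\]

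Finally I would couple the parameters by setting $\tau=\lambda s$ for a constant $\lambda>0$ at my disposal, so that $\mathcal{K}(s,Ku)\le 2(G+F/\lambda)\,\mathcal{K}(\lambda s,u)$. Substituting into the integral defining $\|Ku\|_{(B_0,B_1)_{\theta,p}}$ and changing variables $\sigma=\lambda s$ produces a factor $\lambda^\theta$ and reconstitutes exactly $\|u\|_{(A_0,A_1)_{\theta,p}}$, whence $\|Ku\|_{(B_0,B_1)_{\theta,p}}\le 2(G+F/\lambda)\lambda^\theta\|u\|_{(A_0,A_1)_{\theta,p}}$. Minimizing the scalar $\lambda\mapsto(G+F/\lambda)\lambda^\theta$ at $\lambda=(1-\theta)F/(\theta G)$ converts the additive combination into the geometric mean $G^{1-\theta}F^\theta$, up to the harmless $\theta$-dependent factor $2\,\theta^{-\theta}(1-\theta)^{-(1-\theta)}$, so the bounding constant becomes a multiple of $G^{1-\theta}F^\theta=g(2\|u\|_{A_1})^{1-\theta}f(\|u\|_{A_1},2\|u\|_{A_1})^\theta=h(\|u\|_{A_1})$, as asserted (the explicit constant being absorbed into $h$ or the implicit constant). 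I expect the main obstacle to be precisely the first step, namely engineering the decompositions so that $f$ and $g$ are evaluated only at arguments comparable to $\|u\|_{A_1}$ uniformly in $\tau$; this is what makes the output factor depend on $u$ solely through $\|u\|_{A_1}$. Once that is in place, the parameter balancing $\tau=\lambda s$ and the scalar optimization that yield the product form are routine.
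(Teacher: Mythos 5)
The paper itself gives no proof of this proposition; it is quoted directly from the cited reference of Tartar, so there is no internal argument to compare against. Your $K$-functional proof is the standard route to Tartar's nonlinear interpolation theorem, and it is correct: the restriction of the infimum to decompositions with $\|a_1(\tau)\|_{A_1}\le\|u\|_{A_1}$ is legitimate (the trivial splitting $(0,u)$ beats any competitor violating it), this is exactly what keeps the arguments of $f$ and $g$ controlled by $\|u\|_{A_1}$, and the mixed bound on $\mathcal{K}(s,Ku)$ followed by the change of variables is sound. Two minor remarks. First, your reduction to nondecreasing $f,g$ is not merely cosmetic but necessary: with only continuity, $g$ gets evaluated at $\|a_0(\tau)\|_{A_1}$, which ranges over all of $[0,2\|u\|_{A_1}]$, so the stated form of $h$ is not provable; the statement should be read with $f,g$ nondecreasing (as in Tartar's original formulation), after which your majorants become superfluous and $h$ is built from $f,g$ themselves. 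Second, your argument produces $C_\theta\, h(\|u\|_{A_1})$ with $C_\theta=2\,\theta^{-\theta}(1-\theta)^{-(1-\theta)}\le 4$, not $h$ itself. The constant can be eliminated: take $(1+\epsilon)$-optimal decompositions and, instead of bounding the two terms of $\mathcal{K}(s,Ku)$ separately and then optimizing over a free scalar $\lambda$, impose the coupling $sF=\tau G$ from the start, so that $G\|a_0(\tau)\|_{A_0}+sF\|a_1(\tau)\|_{A_1}=G\bigl(\|a_0(\tau)\|_{A_0}+\tau\|a_1(\tau)\|_{A_1}\bigr)\le(1+\epsilon)G\,\mathcal{K}(\tau,u)$; integrating, changing variables, and letting $\epsilon\to 0$ yields exactly $G^{1-\theta}F^{\theta}=h(\|u\|_{A_1})$. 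Neither point is a genuine gap, and for the paper's application (the persistence result for the KdV flow) any constant multiple of $h$ would serve equally well.
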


Given $s,r>0$, define $\Sigma^{s,r}=H^s(\real)\cap L^2(\jap{x}^{2r}dx)$. It is well-known that
$$
(H^s(\real), L^2(\real))_{\theta, 2}= H^{s\theta}(\real),\quad (L^2(\jap{x}^{2r}dx), L^2(\real))_{\theta,2} = L^{2}(\jap{x}^{2r\theta}dx)
$$

\begin{prop}\label{prop:sigma}
	For any $0<\theta<1$, $k\in \mathbb{N}$ and $r>0$,
	$$
	(\Sigma^{k,r}, L^2(\real))_{\theta,2}=\Sigma^{k\theta,r\theta}.	
$$
\end{prop}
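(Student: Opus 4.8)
The plan is to establish the two continuous inclusions $(\Sigma^{k,r},L^2)_{\theta,2}\hookrightarrow\Sigma^{k\theta,r\theta}$ and $\Sigma^{k\theta,r\theta}\hookrightarrow(\Sigma^{k,r},L^2)_{\theta,2}$ separately, working throughout with the $K$-functional
$$
K(t,u;A_0,A_1)=\inf_{u=u_0+u_1}\left(\|u_0\|_{A_0}+t\|u_1\|_{A_1}\right),
$$
and with the norm normalized so that the two identities recalled before the statement hold, i.e. $\|u\|_{(A_0,L^2)_{\theta,2}}^2=\int_0^\infty\bigl(t^{-(1-\theta)}K(t,u;A_0,L^2)\bigr)^2\,\frac{dt}{t}$. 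The first inclusion is immediate: since $\Sigma^{k,r}\hookrightarrow H^k$ and $\Sigma^{k,r}\hookrightarrow L^2(\langle x\rangle^{2r})$, monotonicity of the real interpolation functor, applied with the identity on the common endpoint $L^2$, gives $(\Sigma^{k,r},L^2)_{\theta,2}\hookrightarrow(H^k,L^2)_{\theta,2}\cap(L^2(\langle x\rangle^{2r}),L^2)_{\theta,2}$, which by the two identities is exactly $H^{k\theta}\cap L^2(\langle x\rangle^{2r\theta})=\Sigma^{k\theta,r\theta}$.

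For the reverse inclusion I would reduce everything to the pointwise-in-$t$ subadditivity estimate
$$
K(t,u;\Sigma^{k,r},L^2)\lesssim K(t,u;H^k,L^2)+K(t,u;L^2(\langle x\rangle^{2r}),L^2),\qquad t>0,
$$
with constant independent of $t$. Indeed, squaring, integrating against $t^{-2(1-\theta)}\,\frac{dt}{t}$, and invoking the two known identities would immediately yield $\|u\|_{(\Sigma^{k,r},L^2)_{\theta,2}}\lesssim\|u\|_{H^{k\theta}}+\|u\|_{L^2(\langle x\rangle^{2r\theta})}=\|u\|_{\Sigma^{k\theta,r\theta}}$. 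The whole problem is thus to merge the two separate near-optimal decompositions underlying $K(\cdot;H^k,L^2)$ and $K(\cdot;L^2(\langle x\rangle^{2r}),L^2)$ into a single decomposition that is simultaneously good for the intersection couple.

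To build it, I would fix $t\ge1$ (the range $t<1$ being handled by the trivial splitting $u_0=0$, $u_1=u$), take a smooth Littlewood--Paley projection $P_{\le\lambda}$ onto $\{\langle\xi\rangle\lesssim\lambda\}$ and a smooth spatial cutoff $\chi_{\le R}(x)=\chi_0(x/R)$ to $\{|x|\lesssim R\}$, and set $\lambda=t^{1/k}$, $R=t^{1/r}$, $c_0=\chi_{\le R}P_{\le\lambda}u$, $c_1=u-c_0$. Then $c_0\in\Sigma^{k,r}$, being frequency-localized and compactly supported, and the estimate splits into three pieces: the smoothness bound $\|c_0\|_{H^k}\lesssim K(t,u;H^k,L^2)$; the weighted bound $\|\langle x\rangle^r c_0\|_{L^2}\lesssim K(t,u;L^2(\langle x\rangle^{2r}),L^2)+K(t,u;H^k,L^2)$; and, after writing $c_1=(1-\chi_{\le R})u+\chi_{\le R}(1-P_{\le\lambda})u$, the remainder bound $t\|c_1\|_{L^2}\lesssim K(t,u;H^k,L^2)+K(t,u;L^2(\langle x\rangle^{2r}),L^2)$. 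The main obstacle is precisely that $P_{\le\lambda}$ and $\chi_{\le R}$ do not commute, so that multiplying a frequency-localized function by the spatial cutoff could a priori spoil its $H^k$ control; the point is that the two operations commute \emph{modulo lower-order terms}. This is where I would use that, since $R\ge1$, one has $\|\chi_{\le R}g\|_{H^k}\lesssim\|\chi_0\|_{C^k}\|g\|_{H^k}$ uniformly in $R$ (and here $k\in\mathbb{N}$ makes the Leibniz estimate transparent, as $\|\partial^i\chi_{\le R}\|_{L^\infty}=R^{-i}\|\partial^i\chi_0\|_{L^\infty}\le\|\partial^i\chi_0\|_{L^\infty}$), which absorbs the commutator into a bounded multiple of $\|P_{\le\lambda}u\|_{H^k}\lesssim K(t,u;H^k,L^2)$. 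The weighted and remainder pieces are then controlled by crude support considerations on $\{|x|\lesssim R\}$ and $\{|x|\gtrsim R\}$ together with the elementary inequality $t\|(1-P_{\le\lambda})u\|_{L^2}\lesssim K(t,u;H^k,L^2)$ and its spatial analogue. Assembling the three pieces gives the subadditivity estimate and closes the argument.
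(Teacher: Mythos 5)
Your proof is correct, but it takes a genuinely different route from the paper's. The paper's proof is operator-theoretic and essentially two lines: setting $E=L^2(\real)$, $F=H^k(\real)$ and $Au=\jap{x}^r u$, so that $D(A)=L^2(\jap{x}^{2r}dx)$ and $D(A)\cap F=\Sigma^{k,r}$, it observes that $t(A+t)^{-1}$ is multiplication by $t/(\jap{x}^r+t)$, which is bounded on both $E$ and $F$ uniformly in $t>0$, and then invokes the theorem of Grisvard (see also Peetre) asserting that, under such uniform resolvent bounds, intersecting with $D(A)$ commutes with real interpolation against $E$; the claim then follows from the two standard identities recalled before the statement. That abstract theorem replaces exactly the hard half of your argument: your first inclusion is the same functoriality remark, while your reverse inclusion proves by hand the K-functional subadditivity $K(t,u;\Sigma^{k,r},L^2)\lesssim K(t,u;H^k,L^2)+K(t,u;L^2(\jap{x}^{2r}dx),L^2)$ via the explicit simultaneous decomposition $c_0=\chi_{\le R}P_{\le\lambda}u$ with $\lambda=t^{1/k}$, $R=t^{1/r}$. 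Your three bounds do hold as claimed: the $H^k$ bound by the uniform-in-$R$ Leibniz estimate; the weighted bound by writing $P_{\le\lambda}u=u-(1-P_{\le\lambda})u$ and using $\jap{x}^r\lesssim t$ on the support of $\chi_{\le R}$ together with the elementary inequality $t\|(1-P_{\le\lambda})u\|_{L^2}\lesssim K(t,u;H^k,L^2)$; and the remainder bound symmetrically, using $t\lesssim\jap{x}^r$ off that support. The trade-off between the two approaches is clear: the paper's route is shorter and extends verbatim to fractional smoothness and to higher dimensions, since no Leibniz rule is needed to check that $t/(\jap{x}^r+t)$ is a uniformly bounded multiplier, but it rests on a cited black box; your route is elementary and self-contained, exhibits an explicit near-optimal decomposition for the intersection couple, and makes transparent where $k\in\mathbb{N}$ enters --- at the cost of length, and of requiring fractional Leibniz or commutator estimates if one wanted non-integer $k$.
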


\begin{proof}
	We set $E=L^2(\real)$, $F=H^k(\real)$ and define $A:D(A)\to E$ by $Au=\jap{x}^ru$. It is clear that $D(A)=L^2(\jap{x}^{2r}dx)$ and that $A$ is closed.
	Given $t>0$, 
	$$t(A+t)^{-1}v=\frac{t}{\jap{x}^r+t}v
	$$ 
 is a bounded operator on both $E$ and $F$ (uniformly in $t$). The result now follows from \cite{grisvard} (see also \cite{peetre}).
\end{proof}

\begin{thm}\label{teo:persistkdv}
	For any $s\ge 2r >0$, if $u_0\in \Sigma^{s,r}$, then the corresponding local solution $u\in C([0,T], H^s(\real))$ of \eqref{kdv} satisfies $u\in L^\infty((0,T), \Sigma^{s,r})$.
\end{thm}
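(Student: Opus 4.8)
The plan is to reduce weighted persistence to a family of integer-order estimates coming from the vector field associated with the linear KdV flow, and then to recover all intermediate (fractional) weights through the nonlinear interpolation of Proposition \ref{prop:tartar}. The first step is to decouple the two components of $\Sigma^{s,r}=H^s(\real)\cap L^2(\jap{x}^{2r}dx)$. Since the local well-posedness theory already gives $u\in C([0,T],H^s(\real))$, it suffices to control the weighted part, i.e. to show $\sup_{[0,T]}\|u(t)\|_{L^2(\jap{x}^{2r}dx)}<\infty$. The constraint $s\ge 2r$ is precisely what makes the weight ``supported'' by the available regularity: the vector field $\Gamma=x-3t\partial_x^2$, which commutes with $\partial_t+\partial_x^3$, trades one factor of $x$ for two $x$-derivatives, so each unit of weight costs two derivatives. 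Consequently, once the weighted bound is available for every $r>0$, the theorem follows from the embedding $H^s\hookrightarrow H^{2r}$: if $u_0\in\Sigma^{s,r}$ with $s\ge 2r$ then $u_0\in\Sigma^{2r,r}$, and combining $u\in C([0,T],H^s)$ with $u\in L^\infty((0,T),L^2(\jap{x}^{2r}dx))$ yields $u\in L^\infty((0,T),\Sigma^{s,r})$.

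For integer weights $m\in\mathbb{N}$ I would run an $L^2$ energy estimate on $\Gamma^m u$. Because $\Gamma$ commutes with the linear evolution, $\Gamma^m u$ solves the KdV equation forced by $\Gamma^m$ applied to the nonlinearity; rewriting $x^m u$ in terms of $\Gamma^m u$ and lower-order contributions and integrating by parts, the top weighted quantity $\|\Gamma^m u\|_{L^2}$ enters quadratically, multiplied by a low-order norm of $u$, so Gronwall's inequality closes the estimate on $[0,T]$. Since $\Gamma^m u$ simultaneously encodes $|x|^m u$ and $\partial_x^{2m}u$, this establishes persistence in $\Sigma^{2m,m}$, with a bound that is linear in $\|u_0\|_{\Sigma^{2m,m}}$ and whose constant depends only on a lower-order norm; the parallel estimate for the difference of two solutions provides the Lipschitz bound. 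These are exactly the hypotheses needed to insert the solution map $K$ into Proposition \ref{prop:tartar}, with $A_0=\Sigma^{2m,m}$ the small endpoint, $A_1$ a boundary space $\Sigma^{2m',m'}$ of lower order, and $B_0=L^\infty((0,T),\Sigma^{2m,m})$, $B_1=C([0,T],A_1)$. Using Proposition \ref{prop:sigma} and its reiteration to identify $(A_0,A_1)_{\theta,2}=\Sigma^{2r,r}$ for the appropriate $\theta$, the nonlinear interpolation upgrades the integer-weight persistence to persistence in $\Sigma^{2r,r}$ for every real $r>0$, which is the weighted bound required above.

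The main obstacle I anticipate is the interface between the energy estimate and Proposition \ref{prop:tartar}: the hypothesis $\|Ku\|_{B_0}\lesssim g(\|u\|_{A_1})\|u\|_{A_0}$ demands that the $\Gamma^m$ estimate be \emph{linear} in the top weighted norm $\|u\|_{\Sigma^{2m,m}}$, with a Gronwall constant controlled by a \emph{fixed lower-order} norm, so that $A_1$ may genuinely be taken one level below $A_0$ and the reiterated interpolation identity applies while keeping the boundary coupling $s=2r$. Securing this forces a careful treatment of the nonlinear commutator $\Gamma^m\big((u^2)_x\big)$: one must distribute the derivatives and weights so that $\Gamma^m u$ never pairs with a comparably high-order factor but always with an $L^\infty$-type low-order norm of $u$. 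This commutator bookkeeping, rather than the interpolation machinery, is the delicate point; once it is in place, the decoupling argument of the first paragraph turns the fractional boundary persistence into the full statement $s\ge 2r$ with no further work.
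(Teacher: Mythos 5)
Your overall skeleton---endpoint persistence in an integer-index weighted space, Tartar's nonlinear interpolation (Proposition \ref{prop:tartar}), and the identification of the intermediate spaces (Proposition \ref{prop:sigma})---is the same as the paper's, but the paper does not prove any endpoint estimate at all: it quotes Kato (and Nahas--Ponce for $s\ge 1$) for persistence in $\Sigma^{2,1}$, with a bound \emph{linear} in $\|u_0\|_{\Sigma^{2,1}}$ and a constant depending only on $\|u_0\|_{L^2}$, and then interpolates against $A_1=B_1=L^2(\real)$. The hard analytic content of your route---the $\Gamma^m$ energy estimate, with the commutator bookkeeping arranged so that $\|\Gamma^m u\|_{L^2}$ enters linearly and the Gronwall constant is controlled by a fixed lower-order norm---is exactly what you defer as ``the delicate point'' and never establish. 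Since Tartar's hypothesis $\|Ku\|_{B_0}\le g(\|u\|_{A_1})\|u\|_{A_0}$ stands or falls with precisely this structure, your argument has a genuine hole at its center. (Your fallback of taking $A_1=\Sigma^{2m',m'}$ one level down also requires a reiteration identity $(\Sigma^{2m,m},\Sigma^{2m',m'})_{\theta,2}=\Sigma^{2r,r}$ that Proposition \ref{prop:sigma} alone does not provide, though that part is standard.)

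There are two further structural gaps in how you feed the solution map into Proposition \ref{prop:tartar}, both of which the paper's construction is specifically engineered to avoid. First, the proposition requires $K$ to be defined and Lipschitz on \emph{all} of $A_1$ with a single fixed $T$, whereas the local existence time depends on the size of the data; the paper resolves this with the truncated map $Ku_0=(R-\|u_0\|_{L^2})\,u(t)$ for $\|u_0\|_{L^2}<R$ and $Ku_0=0$ otherwise, with $T=T(R)$. Your proposal ignores this and applies the proposition to the bare flow map. Second, you take time-dependent target spaces $B_0=L^\infty((0,T),\Sigma^{2m,m})$ and $B_1=C([0,T],A_1)$; but real interpolation of vector-valued $L^\infty$ spaces does not return $L^\infty((0,T),(\cdot,\cdot)_{\theta,2})$ (one only gets an embedding into the larger $(\theta,\infty)$-space pointwise in time), and moreover the required inclusion $B_0\subset B_1$ fails, since an $L^\infty$-in-time function need not be continuous in time. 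The paper sidesteps both issues by applying Tartar at each \emph{fixed} $t\in(0,T)$ with the static spaces $B_0=\Sigma^{2,1}$, $B_1=L^2$, obtaining $\|u(t)\|_{\Sigma^{s,s/2}}\lesssim_R\|u_0\|_{\Sigma^{s,s/2}}$ with a constant uniform in $t$, and only then taking the supremum in time. On the positive side, your first-paragraph reduction of $s\ge 2r$ to the boundary case $s=2r$ (via $u_0\in\Sigma^{2r,r}$ combined with the $H^s$ theory) agrees with the reduction implicitly made in the paper.
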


\begin{proof}
	The result is already known for $s\ge 1$ (\cite{nahasponce}, \cite{nahas2}).
	For the sake of simplicity, we only prove the case $s<1$ and $s=2r$.
	Fix $R>0$. Define $T=T(R)$ such that, for any $u_0\in L^2(\real)$ with $\|u_0\|_{L^2}<R$, the corresponding solution $u$ of \eqref{kdv} is defined on $[0, T]$ and satisfies
	$$
	\|u\|_{C([0,T], L^2(\real^d))}\lesssim \|u_0\|_{L^2}.
	$$
	Define
	$$
	A_0= B_0=\Sigma^{2,1},\quad   A_1=B_1=L^2(\real).
	$$
	For any $0<t<T$, consider the mapping
	$$
	Ku_0 = \left\{\begin{array}{ll}
	(R-\|u_0\|_{L^2})u(t), & \|u_0\|_{L^2}<R\\
	0, & \|u_0\|_{L^2}\ge R
	\end{array}\right..
	$$
	From \cite{kato}, it follows that $K:A_0\to B_0$, and
	$$
	\|Ku_0\|_{B_0}\lesssim C(\|u_0\|_{A_1})\|u_0\|_{A_0}, \quad u_0\in A_0.
	$$
	Moreover, from the local well-posedness theory (see, for example, \cite{KPV5}), 
	$$
	\|Ku_0-Kv_0\|_{B_1}\lesssim C(\|u_0\|_{A_1}, \|v_0\|_{A_1})\|u_0-v_0\|_{A_1}, \quad u_0,v_0 \in A_1.
	$$
	Hence, by Proposition \ref{prop:tartar} and Proposition \ref{prop:sigma}, 
	$$
	\|u(t)\|_{\Sigma^{s,s/2}}= \|u(t)\|_{(\Sigma^{2,1}, L^2(\real))_{s,2}} \lesssim_R \|u_0\|_{(\Sigma^{2,1}, L^2(\real))_{s,2}} = \|u_0\|_{\Sigma^{s,s/2}}.
	$$
	Since $R$ can be chosen arbitrarily large, the proof is finished. 
\end{proof}

\bibliography{biblio}
\bibliographystyle{plain}

\begin{center}
	{\scshape Simão Correia}\\
	{\footnotesize
		Centro de Matemática, Aplicações Fundamentais e Investigação Operacional,\\
		Department of Mathematics,\\
		Instituto Superior T\'ecnico, Universidade de Lisboa\\
		Av. Rovisco Pais, 1049-001 Lisboa, Portugal\\
		simao.f.correia@tecnico.ulisboa.pt
	}
\pagebreak

	{\scshape Jorge Drummond Silva}\\
{\footnotesize
	Center for Mathematical Analysis, Geometry and Dynamical Systems,\\
	Department of Mathematics,\\
	Instituto Superior T\'ecnico, Universidade de Lisboa\\
	Av. Rovisco Pais, 1049-001 Lisboa, Portugal\\
	jsilva@math.tecnico.ulisboa.pt
}

\end{center}

\end{document}